\numberwithin{equation}{section}
\numberwithin{figure}{section}
\tikzstyle{pathdefault}=[draw, line width=1, solid, color=black]
\tikzstyle{nodedefault}=[circle, inner sep=1.5, fill=black]
\tikzstyle{empty}=[]
\tikzstyle{nodeellipsis}=[circle, inner sep=0.5, fill=black]
\tikzstyle{pathcolor1}=[draw, line width=1.3, densely dashed, color=red]
\tikzstyle{pathcolor2}=[draw, line width=1.6, densely dotted, color=blue]
\tikzstyle{pathcolorlight}=[draw, line width=1, dotted, color=lightgray]
\tikzstyle{arbpathcolor0}=[line width=1, dashdotted, color=black]
\tikzstyle{arbpathcolor1}=[line width=1, densely dashed, color=red]
\tikzstyle{arbpathdefault}=[line width=1, densely dotted, color=blue]
\newcounter{id}
\newcommand{\drawlinedotswithstyle}[4]{
 \def\x{{#3}}
 \def\y{{#4}}
 \tikzstyle{thispathstyle}=[#1]
 \tikzstyle{thisnodestyle}=[#2]
 \setcounter{id}{-1} 
 \foreach \j in {#3}{\stepcounter{id}} 
 \foreach \i in {1,...,\the\value{id}}{  
  \path[thispathstyle] (\x[\i],\y[\i]) --(\x[\i-1],\y[\i-1]); 
 }
 \foreach \i in {1,...,\the\value{id}}{  
  \node[thisnodestyle] at (\x[\i],\y[\i]) {}; 
 }
 \node[thisnodestyle] at (\x[0],\y[0]) {}; 
}
\DeclareDocumentCommand{\drawlinedots}{ O{pathdefault} O{nodedefault} m m}{\drawlinedotswithstyle{#1}{#2}{#3}{#4}}
\definecolor{dartmouthgreen}{HTML}{00693E} 
\definecolor{davidsonred}{HTML}{AC1A2F} 
\DeclareMathOperator{\st}{st}
\DeclareMathOperator{\des}{des}
\DeclareMathOperator{\rix}{rix}
\DeclareMathOperator{\exc}{exc}
\DeclareMathOperator{\fix}{fix}
\DeclareMathOperator{\Rix}{Rix}
\DeclareMathOperator{\maj}{maj}
\DeclareMathOperator{\ai}{ai}
\DeclareMathOperator{\pk}{pk}
\DeclareMathOperator{\dbl}{dbl}
\DeclareMathOperator{\Fix}{Fix}
\theoremstyle{plain}
\newtheorem{thm}{\protect\theoremname}
\theoremstyle{plain}
\newtheorem{lem}[thm]{\protect\lemmaname}
\theoremstyle{plain}
\newtheorem{cor}[thm]{\protect\corollaryname}
\theoremstyle{plain}
\newtheorem{prop}[thm]{\protect\propositionname}
\theoremstyle{definition}
\newtheorem{defn}[thm]{Definition}
\theoremstyle{definition}
\newtheorem{algorithm_env}[thm]{Algorithm}
\theoremstyle{definition}
\newtheorem{example}[thm]{Example}
\theoremstyle{definition}
\newtheorem{remark}[thm]{Remark}
\let\originalleft\left
\let\originalright\right
\renewcommand{\left}{\mathopen{}\mathclose\bgroup\originalleft}
\renewcommand{\right}{\aftergroup\egroup\originalright}
\providecommand{\corollaryname}{Corollary}
\providecommand{\lemmaname}{Lemma}
\providecommand{\theoremname}{Theorem}
\providecommand{\propositionname}{Proposition}
\author[Nadia Lafreni\`{e}re and Yan Zhuang]{Nadia Lafreni\`{e}re\affiliationmark{1}
  \and Yan Zhuang\affiliationmark{2}\thanks{YZ was partially supported by an AMS-Simons Travel Grant and NSF grant DMS-2316181.}}
\title[On the $\rix$ statistic and valley-hopping]{On the $\rix$ statistic and valley-hopping}
\affiliation{
  Department of Mathematics, Dartmouth College, Hanover, NH, USA\\
  Department of Mathematics and Computer Science, Davidson College, Davidson, NC, USA}
\keywords{rixed points, fixed points, valley-hopping, modified Foata--Strehl action, permutation statistics, homomesy}
\begin{document}

\publicationdata{vol. 26:2}{2024}{3}{10.46298/dmtcs.11553}{2023-07-07; None}{2024-01-05}

\maketitle

\begin{abstract}
This paper studies the relationship between the modified Foata--Strehl action (a.k.a.\ valley-hopping)---a group action on permutations used to demonstrate the $\gamma$-positivity of the Eulerian polynomials---and the number of rixed points $\rix$---a recursively-defined permutation statistic introduced by Lin in the context of an equidistribution problem. We give a linear-time iterative algorithm for computing the set of rixed points, and prove that the $\rix$ statistic is homomesic under valley-hopping. We also demonstrate that a bijection $\Phi$ introduced by Lin and Zeng in the study of the $\rix$ statistic sends orbits of the valley-hopping action to orbits of a cyclic version of valley-hopping, which implies that the number of fixed points $\fix$ is homomesic under cyclic valley-hopping.
\end{abstract}

{\let\thefootnote\relax\footnotetext{2020 \textit{Mathematics Subject Classification}. Primary 05A05; Secondary 05E18.}}

\section{Introduction}
Let $\mathfrak{S}_{n}$ denote the symmetric group of permutations of $[n]\coloneqq\{1,2,\dots,n\}$. We will usually write permutations in one-line notation, so $\pi\in\mathfrak{S}_{n}$ is written as $\pi=\pi_{1}\pi_{2}\cdots\pi_{n}$. Given a permutation $\pi\in\mathfrak{S}_{n}$, we call $\pi_i$ (where $i\in[n-1]$) a \textit{descent} of $\pi$ if $\pi_i>\pi_{i+1}$ and we call $i\in[n]$ an \textit{excedance} of $\pi$ if $i<\pi_i$. Let $\des(\pi)$ denote the number of descents of $\pi$, and $\exc(\pi)$ its number of excedances. 

The descent number $\des$ and excedance number $\exc$ are classical permutation statistics which are well known to be equidistributed over $\mathfrak{S}_{n}$; in other words, for any fixed $n$ and $k$, the number of permutations in $\mathfrak{S}_{n}$ with $\des(\pi)=k$ is equal to the number of permutations in $\mathfrak{S}_{n}$ with $\exc(\pi)=k$. The distributions of both statistics are encoded by the \textit{Eulerian polynomials} 
\[
A_{n}(t)\coloneqq\sum_{\pi\in\mathfrak{S}_{n}}t^{\des(\pi)}=\sum_{\pi\in\mathfrak{S}_{n}}t^{\exc(\pi)}.
\]
The Eulerian polynomials are \textit{$\gamma$-positive}: there exist non-negative coefficients $\gamma_{k}$ for which 
\[
A_{n}(t)=\sum_{k=0}^{\left\lfloor (n-1)/2\right\rfloor }\gamma_{k}t^{k}(1+t)^{n-1-2k}.
\]
The $\gamma$-positivity of $A_{n}(t)$ implies that the coefficients of $A_{n}(t)$ are unimodal and symmetric. One method of proving the $\gamma$-positivity of $A_{n}(t)$ which yields a combinatorial interpretation of the coefficients $\gamma_{k}$ is via a group action on permutations which we will refer to as \textit{valley-hopping} (but which is also called the \textit{modified Foata--Strehl action} in the literature). One can show that the distribution of $\des$ over each orbit of the valley-hopping action is of the form $t^{k}(1+t)^{n-1-2k}$, and summing over all orbits completes the proof. Valley-hopping and its variants have been used to provide combinatorial proofs for related $\gamma$-positivity results, some of which are described in the survey article \cite{Athanasiadis[20162018]} on $\gamma$-positivity in combinatorics and geometry.

This paper concerns the relationship between valley-hopping and a permutation statistic denoted $\rix$. The $\rix$ statistic is defined recursively in the following way: let $\rix(\varnothing)=0$, and if $w = w_1 w_2 \cdots w_k$ is a word with distinct positive integer letters and largest letter $w_i$, then let
\begin{equation}
\rix(w)\coloneqq\begin{cases}
0, & \text{if }i=1<k,\\
1+\rix(w_{1}w_{2}\cdots w_{k-1}), & \text{if }i=k,\\
\rix(w_{i+1} w_{i+2}\cdots w_{k}), & \text{if }1<i<k.
\end{cases} \label{e-rix}
\end{equation}
Equivalently, $\rix(\pi)$ is the number of ``rixed points'' of $\pi$ as defined by Lin and Zeng \cite{Lin2015}.

The $\rix$ statistic was introduced by Lin \cite{Lin2013} in the context of an equidistribution problem which we will now describe. Consider the \textit{basic Eulerian polynomials} 
\[
A_{n}(t,q,r)\coloneqq\sum_{\pi\in\mathfrak{S}_{n}}t^{\exc(\pi)}r^{\fix(\pi)}q^{\maj(\pi)-\exc(\pi)}
\]
where $\fix(\pi)$ is the number of fixed points of $\pi$ and $\maj(\pi)\coloneqq\sum_{\pi_{i}>\pi_{i+1}}i$ is the \textit{major index} of $\pi$ (the sum of its descents). Since $\des$ and $\exc$ are equidistributed over $\mathfrak{S}_{n}$, one may ask if there are statistics $\st_{1}$ and $\st_{2}$ for which $(\exc,\fix,\maj-\exc)$ and $(\des,\st_{1},\st_{2})$ are equidistributed over $\mathfrak{S}_{n}$. Lin showed that we can take $\st_{1}=\rix$ and $\st_{2}=\ai$, where the latter is the number of ``admissible inversions'' of $\pi$. In other words, 
\[
A_{n}(t,q,r)=\sum_{\pi\in\mathfrak{S}_{n}}t^{\des(\pi)}r^{\rix(\pi)}q^{\ai(\pi)}
\]
is an alternative interpretation for the basic Eulerian polynomials.

Prior to Lin's introduction of the $\rix$ statistic, Shareshian and Wachs \cite{Shareshian2010} had remarked that the polynomials $A_{n}(t,0,r)$ and $A_{n}(t,1,r)$ satisfy a refined version of $\gamma$-positivity. Later, Lin and Zeng \cite{Lin2015} used valley-hopping and Lin's interpretation of the basic Eulerian polynomials to give combinatorial interpretations for the $\gamma$-coefficients of $A_{n}(t,0,r)$ and $A_{n}(t,1,r)$. Along the way, they defined a bijection $\Phi\colon\mathfrak{S}_{n}\rightarrow\mathfrak{S}_{n}$ satisfying $\des(\pi)=\exc(\Phi(\pi))$ and $\Rix(\pi)=\Fix(\Phi(\pi))$ where $\Rix$ is the set of rixed points and $\Fix$ the set of fixed points. The existence of this bijection $\Phi$ demonstrates that not only are $(\exc,\fix)$ and $(\des,\rix)$ jointly equidistributed over $\mathfrak{S}_{n}$, but $(\exc,\Fix)$ and $(\des,\Rix)$ are as well.\footnote{The same is not true for $(\exc,\Fix,\maj-\exc)$ and $(\des,\Rix,\ai)$.}

Dynamical algebraic combinatorics\textemdash which, broadly speaking, investigates phenomena associated with actions on combinatorial structures\textemdash is an emerging area of research within algebraic combinatorics. An example of such phenomena is \textit{homomesy}, where a statistic on a set of combinatorial objects has the same average value over each orbit of an action; see \cite{Roby2016} for a survey of this topic. Motivated by \cite{Elder2022}, which was a systematic investigation of the homomesy phenomenon on permutations, the present work originated as an attempt to identify permutation statistics which are homomesic under valley-hopping, which was not considered in \cite{Elder2022}. Following the approach of \cite{Elder2022}, we automatically searched for permutation statistics from the online FindStat database \cite{FindStat} that exhibited homomesic behavior under valley-hopping for $2\leq n \leq 6$. Our positive matches included the descent number and some related statistics (such as the number of ascents), but aside from these, the only statistic that appeared to be homomesic under valley-hopping is the $\rix$ statistic.\footnote{As of June 28, 2023, there were 387 other permutation statistics with code in the FindStat database, but we found counterexamples for all of them.}

A simple examination of the orbit structure of the valley-hopping action shows that $\des$ is homomesic under valley-hopping,\footnote{In fact, the homomesy of $\des$ under valley-hopping is implicit in the valley-hopping proof for the $\gamma$-positivity of the Eulerian polynomials.} which in turn implies that the related statistics are homomesic by way of symmetry arguments. On the other hand, proving that $\rix$ is homomesic under valley-hopping required further investigation, and it soon became evident to us that there is more to the relationship between the $\rix$ statistic and valley-hopping than meets the eye. Our subsequent explorations on this topic led to the full results presented here.

The organization of our paper is as follows. Section 2 introduces the definitions of several permutation statistics that are relevant to this work, as well as Lin and Zeng's ``rix-factorization'' of a permutation (which is needed to define the $\Rix$ statistic). Section 3 is devoted to a linear-time iterative algorithm for computing the set of rixed points and the rix-factorization. After we present and demonstrate the validity of our algorithm, we will use this algorithm to help prove several results, including a recursive definition for $\Rix$ that lifts the definition of $\rix$ given in \eqref{e-rix}, another characterization for rixed points, and a few additional lemmas about rixed points and the rix-factorization which we will use later on in this paper.

Section 4 is again expository, and provides the definition of valley-hopping and introduces a cyclic version of valley-hopping. Cyclic valley-hopping was originally defined on derangements by Sun and Wang \cite{Sun2014}, and was extended to all permutations by Cooper, Jones, and the second author~\cite{Cooper2020}. While the version of cyclic valley-hopping due to Cooper--Jones--Zhuang fixes all fixed points, our version of cyclic valley-hopping does not fix fixed points. We also define ``restricted'' versions of valley-hopping and cyclic valley-hopping. Restricted valley-hopping was first introduced by Lin and Zeng \cite{Lin2015}, whereas restricted cyclic valley-hopping is precisely the version of cyclic valley-hopping due to Cooper--Jones--Zhuang mentioned above.

Sections 5--6 focus on our main results concerning the relationship between valley-hopping and the $\rix$ statistic. In Section 5, we prove that $\rix$ is homomesic under valley-hopping. Finally, in Section 6, we show that the bijection $\Phi$ of Lin and Zeng sends valley-hopping orbits to cyclic valley-hopping orbits (and sends restricted valley-hopping orbits to restricted cyclic valley-hopping orbits). As a consequence, $\fix$ (the number of fixed points) is homomesic under cyclic valley-hopping.

\section{Permutation statistics}

The purpose of this preliminary section is to introduce several permutation statistics that will be relevant to our work. Fix a permutation $\pi=\pi_1\pi_2\cdots\pi_n$ in $\mathfrak{S}_n$. We have already defined descents; an \textit{ascent} of $\pi$ is a letter $\pi_i$ (where $i\in[n]$) for which $\pi_i < \pi_{i+1}$, with the convention $\pi_{n+1}=\infty$---i.e., an ascent is a letter that is not a descent. For example, take $\pi = 135987426$. Then the ascents of $\pi$ are $1$, $3$, $5$, $2$, and $6$, whereas its descents are $\pi$ are $9$, $8$, $7$, and $4$. Notice that, under our definition, the last letter of a permutation is always an ascent.

Let us adopt the convention $\pi_{0}=\pi_{n+1}=\infty$ for the definitions below. Given $i\in[n]$, we call $\pi_i$:
\begin{itemize}
\item a \textit{peak} of $\pi$ if $\pi_{i-1}<\pi_i>\pi_{i+1}$;
\item a \textit{valley} of $\pi$ if $\pi_{i-1}>\pi_i<\pi_{i+1}$;
\item a \textit{double ascent} of $\pi$ if $\pi_{i-1}<\pi_i<\pi_{i+1}$;
\item a \textit{double descent} of $\pi$ if $\pi_{i-1}>\pi_i>\pi_{i+1}$.
\end{itemize}
Continuing the example above, the only peak of $\pi = 135987426$ is $9$; its valleys are $1$ and $2$; its double ascents are $3$, $5$, and $6$; and its double descents are $8$, $7$, and $4$. In particular, observe that every letter of a permutation is either a peak, valley, double ascent, or double descent.

We note that the terms ascent, descent, peak, valley, double ascent, and double descent more commonly refer to a position $i$ as opposed to a letter $\pi_i$, and most authors do not take $\pi_{0}=\pi_{n+1}=\infty$ when defining these terms. It will be more convenient for us to adopt these conventions.

Next, recall that the $\rix$ statistic was defined by Lin using the recursive formula \eqref{e-rix}, and that Lin and Zeng later defined the set-valued statistic $\Rix$ for which $\rix$ gives the cardinality. The definition of $\Rix$ relies on Lin and Zeng's ``rix-factorization'', which is given below.

\begin{defn}  \label{defn:rix_lin_zeng}
 Each permutation $\pi \in \mathfrak{S}_n$ can be uniquely written in the form 
 \begin{equation}
     \pi = \alpha_1 \cdots \alpha_k \beta \label{eq:rix-factorization}
 \end{equation}
 where the factors $\alpha_1,\dots,\alpha_k,\beta$ (henceforth called \textit{rix-factors}) are obtained by applying the following algorithm:
 \begin{enumerate}
    \item[(1)] Initialize $w\coloneqq\pi$ and $i\coloneqq0$.
    \item[(2)] If $w$ is an increasing word, let $\beta \coloneqq w$ and terminate the algorithm. Otherwise, increase $i$ by 1, let $x$ be the largest descent of $w$, and write $w = w'xw''$ (so that $w'$ consists of all the letters of $w$ to the left of $x$, and $w''$ all the letters to the right of $x$).
    \item[(3)] If $w'= \varnothing$, let $\beta \coloneqq w$ and terminate the algorithm. Otherwise, let $\alpha_i \coloneqq w'x$ and $w \coloneqq w''$, and go to (2).
 \end{enumerate}
The expression \eqref{eq:rix-factorization} is called the \textit{rix-factorization} of $\pi$.
 \end{defn}

When writing out the rix-factorization of a permutation, we will often use vertical bars to demarcate the rix-factors. It will also be convenient for us to let $\beta_1(\pi)$ denote the first letter of $\beta$ in the rix-factorization of a permutation $\pi$.

\begin{defn} \label{d-rixpt}
A \emph{rixed point} of $\pi$ is a letter in the maximal increasing suffix of $\pi$ which is not smaller than $\beta_1(\pi)$, and the set of rixed points of $\pi$ is denoted $\Rix(\pi)$.
\end{defn}

\begin{example}\label{eg-rix1}
    Let us walk through the algorithm in Definition \ref{defn:rix_lin_zeng} for the permutation $\pi = 142785369$:
    \begin{itemize}
    	\item[(1)] Set $w=142785369$ and $i=0$. 
        \item[(2-1)] Since $w$ is not increasing, we set $i=1$, $x=8$, $w'=1427$, and $w''=5369$.
        \item[(3-1)] Since $w'\neq \varnothing$, we set $\alpha_1=14278$ and $w=5369$.
        \item[(2-2)] Since $w$ is not increasing, we set $i=2$, $x=5$, $w'=\varnothing$, and $w''=369$.
        \item[(3-2)] Since $w = \varnothing$, we set $\beta=5369$.
    \end{itemize}
    Thus the rix-factorization of $\pi$ is $14278|5369$ and $\Rix(\pi)=\{ 6,9 \}$.
\end{example}

The above example showcased a permutation for which the algorithm terminates in step (3). Below is an example in which termination occurs in step (2).

\begin{example}\label{eg-rix2}
    Let us walk through the algorithm in Definition \ref{defn:rix_lin_zeng} for the permutation $\pi = 23816457$:
    \begin{itemize}
    	\item[(1)] Set $w=23816457$ and $i=0$. 
        \item[(2-1)] Since $w$ is not increasing, we set $i=1$, $x=8$, $w'=23$, and $w''=16457$.
        \item[(3-1)] Since $w'\neq \varnothing$, we set $\alpha_1=238$ and $w=16457$.
        \item[(2-2)] Since $w$ is not increasing, we set $i=2$, $x=6$, $w'=1$, and $w''=457$.
        \item[(3-2)] Since $w'\neq \varnothing$, we set $\alpha_2=16$ and $w=457$.
        \item[(2-3)] Since $w$ is increasing, we set $\beta=457$.
    \end{itemize}
    Thus the rix-factorization of $\pi$ is $238|16|457$ and $\Rix(\pi)=\{ 4,5,7 \}$.
\end{example}

While the algorithm in Definition \ref{defn:rix_lin_zeng} is recursive, our algorithm in the next section is iterative.

\section{An iterative algorithm for rixed points and the rix-factorization}
	In this section, we give an iterative algorithm for computing the rixed points of a permutation along with its rix-factorization. This is achieved through the use of pointers on the permutation, restricting it to a \textit{valid factor}. At first the valid factor is taken to be the entire permutation, but we gradually restrict the valid factor as the algorithm progresses. To make this algorithm iterative, we consider all the entries of the permutation in decreasing order. For each entry $x$, we first check if it appears in the valid factor, and if it does, we use the local shape of the permutation around $x$ to move a boundary of the valid factor inward. When we move the left boundary, then a new term is added to the rix-factorization; when we move the right boundary, $x$ is added as a rixed point.

    After describing the algorithm explicitly, we will prove that the output of our algorithm indeed gives the rix-factorization and the rixed points as defined by Lin and Zeng, and then we use our algorithm to prove several more results concerning rixed points and the rix-factorization.
    
    \subsection{Explicit description of the algorithm}
    
    \begin{algorithm_env}\label{algorithm}
    Let $\pi\in \mathfrak{S}_n$ and $\sigma = \pi^{-1}$.
    Throughout this algorithm, let $\pi_l \cdots \pi_r$ denote the valid factor. We begin with $l=1$ and $r=n$, so that the entire permutation $\pi$ is the valid factor. 
    We let $x$ iterate through each of the letters $n, n-1, n-2, \ldots, 1$---in that order---until the stopping condition described below occurs. Let $i$ be the position of $x$ in $\pi$, so that $\pi_i=x$ or, equivalently, $\sigma_x = i$. If $x$ belongs to the valid factor (i.e., if $l \leq i \leq r$):
        \begin{enumerate} 
            \item[(a)] If $x$ is a peak of $\pi$, then the valid factor becomes $\pi_{i+1}\cdots \pi_r$, and $\pi_l\cdots \pi_i = \pi_l\cdots x$ is added to the rix-factorization.
            
            Otherwise, $x$ is either the first or the last letter of the valid factor (because letters are examined in decreasing order).
            \item[(b)]  If $x$ is the first letter of the valid factor, then we add $\pi_i\cdots \pi_n=x\cdots \pi_n$ to the rix-factorization, and $x$ is added as a rixed point if $x$ is an ascent of $\pi$. The algorithm terminates.
            \item[(c)]  If $x$ is the last letter (but not the first) of the valid factor, then the valid factor becomes $\pi_l \cdots \pi_{i-1}$, and $x$ is added as a rixed point.
        \end{enumerate}
        Once the algorithm stops (when $x$ is the first letter of the valid factor), we return the set of rixed points and the rix-factorization.
    \end{algorithm_env}

Pseudocode for Algorithm \ref{algorithm} is given below.
    
\begin{algorithm}[h]
\begin{algorithmic}[1]
\Ensure $\pi$ is a permutation
\State $ n \gets |\pi|$
\State $\sigma \gets \pi^{-1}$
\State $l \gets 1$
\State $r \gets n$
\State $\Rix \gets \{\}$
\State $\text{factorization} \gets []$
\State $\pi_0 \gets \infty,\ \pi_{n+1} \gets\infty$
\For{$x = n$ to $1$ (in decreasing order)}
\State $i\gets \sigma_x$
\If{$l\leq i \leq r$} \Comment{$x$ is in the valid factor}
\If{$\pi_{i-1} < x >  \pi_{i+1}$} \Comment{$x$ is a peak of $\pi$}
\State $l \gets i+1$
\State append $\pi_l\cdots x$ to factorization
\ElsIf{$i=l$}  \Comment{$x$ is the first letter of valid factor}
\State append $x\cdots \pi_n$ to factorization
\If{$x < \pi_{l+1}$} \Comment{$x$ is an ascent}
\State add $x$ to $\Rix$
\EndIf
\State \Return{$\Rix$, factorization}
\Else \Comment{$x$ is the last letter (but not the first) of the valid factor}
    \State add $x$ to $\Rix$
    \State $r \gets i-1$
\EndIf
\EndIf
\EndFor
\end{algorithmic}
\renewcommand{\thealgorithm}{5}
\caption{Compute rixed points and rix-factorization}\label{pseudocode}
\end{algorithm}
\begin{remark}
Algorithm \ref{algorithm} is executed in a time that is linear with respect to $n$. The number of operations is at least linear, since finding the inverse of a permutation requires reading it all and is thus executed in linear time. All other operations (comparisons, attributions, additions to list) are done in constant time, and there is a single for-loop (that we go through at most $n$ times), meaning that the algorithm requires a number of operations that is at most linear with respect to $n$.
\end{remark}

Let us illustrate this algorithm with two examples; compare with Examples \ref{eg-rix1}--\ref{eg-rix2}.

    \begin{example}[Example \ref{eg-rix1} continued] \label{eg-rix1-2}
        Let us walk through Algorithm \ref{algorithm} for $\pi = 142785369$, highlighting the evolution of the valid factor:
    	\begin{itemize}[leftmargin=45bp]
    		\item[($x=9$)] $14278536\mathbf{9} \to 14278536$, because $x=9$ is at the 
    		end of the valid factor. The rix-factorization contains no terms yet, and $9$ is added as a rixed point.
    		\item[($x=8$)] $1427\mathbf{8}536 \to 536$, because $x=8$ is a peak. We add $14278$ to the rix-factorization, and the set of rixed points is unchanged.
    		\item[($x=7$)] $536 \to 536$, because $x=7$ is outside the valid factor. The rix-factorization and the set of rixed points are unchanged.
    		\item[($x=6$)] $53\mathbf{6} \to 53$, because $x=6$ is at the end of the 
    		valid factor. The rix-factorization is unchanged, and $6$ is added as a rixed point.
    		\item[($x=5$)] The algorithm terminates because $x=5$ is the first letter of the valid factor $\mathbf{5}3$. We add $5369$ to the rix-factorization, but $5$ is not added as a rixed point because it is not an ascent.
    	\end{itemize}
    	Thus the rix-factorization of $\pi$ is $14278|5369$ and $\Rix(\pi)=\{ 6,9 \}$, which agrees with what was obtained before.
    \end{example}
    
    \begin{example}[Example \ref{eg-rix2} continued] \label{eg-rix2-2}
        Let us walk through Algorithm \ref{algorithm} for $\pi = 23816457$, highlighting the evolution of the valid factor:
    	\begin{itemize}[leftmargin=45bp]
    		\item[($x=8$)] $23\mathbf{8}16457 \to 16457$, because $x=8$ is a peak. We add $238$ to the rix-factorization, and the set of rixed points is currently empty.
    		\item[($x=7$)] $1645\mathbf{7} \to 1645$, because $x=7$ is at the end of the valid factor. The rix-factorization is unchanged, and $7$ is added as a rixed point.
    		\item[($x=6$)] $1\mathbf{6}45\to 45$, because $x=6$ is a peak. We add $16$ to the rix-factorization, and the set of rixed points is unchanged.
    		\item[($x=5$)] $4\mathbf{5} \to 4$, because $x=5$ is at the end of the valid factor. The rix-factorization is unchanged, and $5$ is added as a rixed point.
        	\item[($x=4$)] The algorithm terminates because $x=4$ is the first letter of the valid factor $\mathbf{4}$. We add $457$ to the rix-factorization, and $4$ is added as a rixed point because it is an ascent.
    	\end{itemize}
        Thus the rix-factorization of $\pi$ is $238|16|457$ and $\Rix(\pi)=\{ 4,5,7 \}$, which agrees with what was obtained before.
    \end{example}

See Figure \ref{f-alg} for visual depictions of Examples \ref{eg-rix1-2} and \ref{eg-rix2-2}.
    
\begin{figure}[t]
\begin{center}
\begin{tikzpicture}[scale=0.5] 	
    \draw[step=1,lightgray,thin] (0,1) grid (10,10); 
	\tikzstyle{node0}=[circle, inner sep=2, fill=black] 
	\tikzstyle{node1}=[rectangle, inner sep=3, fill=dartmouthgreen] 
	\tikzstyle{node2}=[diamond, inner sep=2, fill=davidsonred] 
	\node[node0] (A) at (1,1) {}; 
	\node[node0] (B) at (2,4) {}; 
	\node[node0] (C) at (3,2) {}; 
	\node[node0] (D) at (4,7) {}; 
	\node[node1] (E) at (5,8) {}; 
	\node[node0] (F) at (6,5) {}; 
	\node[node0] (G) at (7,3) {}; 
	\node[node2] (H) at (8,6) {};  
    \node[node2] (I) at (9,9) {}; 
	\tikzstyle{ridge}=[draw, line width=1, dotted, color=black] 
	\path[ridge] (0,9)--(A) -- (B) -- (C) -- (D) -- (E) -- (F) -- (G) -- (H) -- (I) --(10,10); 
    \tikzstyle{hop1}=[draw, line width = 1, color=davidsonred,-]
	\path[hop1] (0.5,12.5)--(9.5,12.5);
    \node[node2] at (9,12) {}; 
    \path[hop1] (0.5,12)--(8.5,12);
    \node[node1] at (5,11.5) {};
	\path[hop1] (5.5,11.5)--(8.5,11.5);
    \path[hop1] (5.5,11)--(8.5,11);
    \node[node2] at (8,10.5) {}; 
	\path[hop1] (5.5,10.5)--(7.5,10.5);
    \node[color=davidsonred] at (0,13) {\small trace of valid factor};

    \tikzstyle{pi}=[above=-5] 
	\node[pi] at (1,0) {1}; 
	\node[pi] at (2,0) {4}; 
	\node[pi] at (3,0) {2}; 
	\node[pi] at (4,0) {7}; 
	\node[pi] at (5,0) {8}; 
	\node[pi] at (6,0) {5}; 
	\node[pi] at (7,0) {3}; 
	\node[pi] at (8,0) {6}; 
    \node[pi] at (9,0) {9};

	\begin{scope}[shift={(16,0)}]
	\draw[step=1,lightgray,thin] (0,1) grid (9,9); 
    \tikzstyle{node0}=[circle, inner sep=2, fill=black] 
	\tikzstyle{node1}=[rectangle, inner sep=3, fill=dartmouthgreen] 
	\tikzstyle{node2}=[diamond, inner sep=2, fill=davidsonred] 
	\node[node0] (A) at (1,2) {}; 
	\node[node0] (B) at (2,3) {}; 
	\node[node1] (C) at (3,8) {}; 
	\node[node0] (D) at (4,1) {}; 
	\node[node1] (E) at (5,6) {}; 
	\node[node2] (F) at (6,4) {}; 
	\node[node2] (G) at (7,5) {}; 
	\node[node2] (H) at (8,7) {};  
    \tikzstyle{ridge}=[draw, line width=1, dotted, color=black] 
	\path[ridge] (0,9)--(A) -- (B) -- (C) -- (D) -- (E) -- (F) -- (G) -- (H) -- (9, 9); 
    \tikzstyle{pi}=[above=-5] 
    \node[pi] at (1,0) {2}; 
	\node[pi] at (2,0) {3}; 
	\node[pi] at (3,0) {8}; 
	\node[pi] at (4,0) {1}; 
	\node[pi] at (5,0) {6}; 
	\node[pi] at (6,0) {4};
	\node[pi] at (7,0) {5}; 
	\node[pi] at (8,0) {7}; 
    \tikzstyle{hop1}=[draw, line width = 1, color=davidsonred,-]
	\path[hop1] (0.5,12.5)--(8.5,12.5);
    \path[hop1] (3.5,12)--(8.5,12);
	\path[hop1] (3.5,11.5)--(7.5,11.5);
    \path[hop1] (5.5,11)--(7.5,11);
	\path[hop1] (5.5,10.5)--(6.5,10.5);
    \node[color=davidsonred] at (0,13) {\small trace of valid factor};
    \node[node1]  at (3,12) {}; 
	\node[node1] at (5,11) {}; 
	\node[node2] at (6,10) {}; 
	\node[node2] at (7,10.5) {}; 
	\node[node2] at (8,11.5) {};  
    
	\end{scope}
\end{tikzpicture}
\end{center}

\caption{Visual depictions of Algorithm \ref{algorithm} on $\pi = 142785369$ and $\pi = 23816457$, as in Examples~\ref{eg-rix1-2} and \ref{eg-rix2-2}. The peaks that mark the end of each $\alpha$-factor in the rix-factorization are depicted as green squares, and the rixed points as red diamonds. The progression of the valid factor is depicted on top of the permutation.}\label{f-alg}
\end{figure}
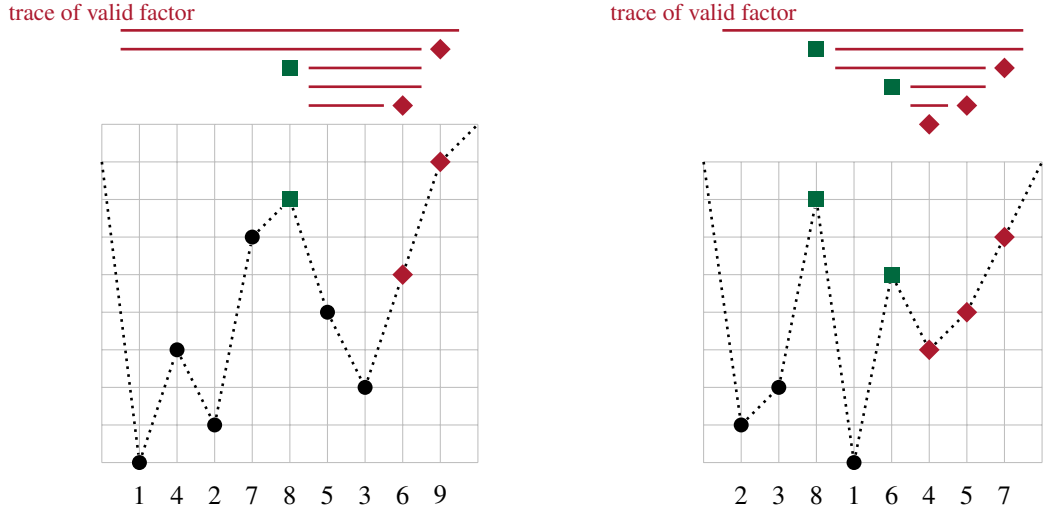

\subsection{Proof that Algorithm \ref{algorithm} gives the rix-factorization}
We first show that Algorithm \ref{algorithm} indeed gives what Lin and Zeng defined to be the rix-factorization (as in Definition \ref{defn:rix_lin_zeng}). To do so, we need a lemma regarding what is to the right of the valid factor.

\begin{lem}\label{lem:ascents_right_of_valid_factor}
    At any stage during the execution of Algorithm \ref{algorithm}, if the valid factor is $\pi_l\cdots\pi_r$, then $\pi_r, \pi_{r+1}, \ldots, \pi_n$ are all ascents of $\pi$, so $\pi_r< \pi_{r+1} < \cdots < \pi_n$.
\end{lem}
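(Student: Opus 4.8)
The plan is to prove the lemma by induction on the number of iterations of the main for-loop of Algorithm~\ref{algorithm} that have been executed, taking the statement of the lemma itself as the invariant: whenever the valid factor is $\pi_l\cdots\pi_r$ (i.e., between consecutive pointer updates), one has $\pi_r<\pi_{r+1}<\cdots<\pi_n$. Since the valid factor is altered only in cases (a) and (c), it suffices to verify that the invariant survives each of those two kinds of steps, together with the base case. The base case is the situation before any iteration, where $l=1$ and $r=n$, so the claim reduces to "$\pi_n$ is an ascent", which holds by the convention $\pi_{n+1}=\infty$.

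For the inductive step I would first clear away the easy cases. If the letter $x$ currently processed is not in the valid factor, or if case (b) occurs, the valid factor does not change and the invariant is inherited unchanged. If case (a) occurs (so $x$ is a peak), only the left pointer $l$ is modified while $r$ is untouched, so again the invariant passes through verbatim.

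The one case with content is (c): here $x$ is the last letter of the valid factor but not the first, so its position is $i=r$, we have $l<r$, and the new valid factor is $\pi_l\cdots\pi_{r-1}$; the goal is $\pi_{r-1}<\pi_r<\pi_{r+1}<\cdots<\pi_n$. The tail $\pi_r<\cdots<\pi_n$ is precisely the previous-iteration invariant, so everything comes down to $\pi_{r-1}<\pi_r$. The subtle point — and what I expect to be the main obstacle — is that the hypothesis "$x$ is not a peak" is of no help here, since the peak test already fails on the right (by the invariant $\pi_{r+1}>\pi_r=x$, or by $\pi_{n+1}=\infty$), giving no information about $\pi_{r-1}$. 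So I would instead isolate and use the structural fact that \emph{when an iteration begins, $x$ is the largest letter of the current valid factor.} This holds because each letter $y>x$ was processed earlier (letters are examined in decreasing order), and at its iteration either $y$ lay outside the valid factor already, or it lay inside and the step pushed a pointer past $y$'s position (case (a) moves $l$ past it; case (c) moves $r$ past it; case (b) would have terminated the algorithm before we reached $x$); since $l$ is non-decreasing and $r$ is non-increasing, $y$'s position stays outside $[l,r]$ thereafter. Consequently all of $\pi_l,\dots,\pi_r$ are $\le x$, and since $x=\pi_r$ itself lies in the factor, $x=\max\{\pi_l,\dots,\pi_r\}$. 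As $l<r$, the entry $\pi_{r-1}$ is among $\pi_l,\dots,\pi_{r-1}$, hence $\pi_{r-1}<\pi_r$, which completes the inductive step and the proof.

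In short, the induction is routine once the maximality observation is in hand; that observation is essentially baked into the algorithm's design (it is the justification for the dichotomy in cases (a)--(c): "because letters are examined in decreasing order"), but it should be stated and proved explicitly, since it is exactly what delivers the inequality $\pi_{r-1}<\pi_r$ driving case (c).
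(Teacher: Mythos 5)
Your proof is correct and follows essentially the same route as the paper's: an induction on the iterations of the for-loop, with the one nontrivial step being case~(c), where the inequality $\pi_{r-1}<\pi_r$ is obtained from the fact that $\pi_r$ is the largest letter of the current valid factor. The paper states this maximality fact more tersely (it is built into the phrasing of the algorithm's case dichotomy), whereas you spell out why it holds — namely, that every letter larger than $x$ has already been processed and ejected from the valid factor, which can only shrink. That added justification is harmless and arguably clarifying; the substance and structure of the argument are the same.
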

\begin{proof}
    We proceed by induction. Our base case is at the start of the algorithm, at which point there is nothing to the right of the valid factor. Since the last letter of a permutation is defined to be an ascent by convention, the result holds.
    
    Now, assume that the statement is true when the valid factor is $\pi_l\cdots\pi_r$. We need to show that iterating Algorithm \ref{algorithm} by one step preserves the accuracy of the statement. For this, we show that whenever the right boundary of the valid factor moves, it moves only by one position and the new right boundary is an ascent of $\pi$. By Algorithm \ref{algorithm}, the right boundary changes only when $\pi_r$ is the largest letter of the valid factor $\pi_l\cdots\pi_r$ and $l\neq r$, at which point the valid factor becomes $\pi_l\cdots\pi_{r-1}$. Hence, $\pi_{r-1} < \pi_r$, so $\pi_{r-1}$ is an ascent of $\pi$. We also know from the induction hypothesis that $\pi_{r}, \pi_{r+1}, \ldots, \pi_{n}$ are ascents. Thus, everything weakly to the right of the right boundary $\pi_{r-1}$ is an ascent, or equivalently, $\pi_{r-1}<\pi_{r}<\cdots < \pi_{n}$.
\end{proof}

\begin{prop}\label{prop:same_rix_factorization}
    Algorithm \ref{algorithm} produces the rix-factorization as given in Definition \ref{defn:rix_lin_zeng}. 
\end{prop}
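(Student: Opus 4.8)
The plan is to run Algorithm~\ref{algorithm} and the recursive procedure of Definition~\ref{defn:rix_lin_zeng} side by side and show that they emit their factors in lockstep. The basic bookkeeping observation I would record first is that whenever Algorithm~\ref{algorithm} acts on a letter $x$ lying in the current valid factor, $x$ is necessarily the \emph{largest} letter of that valid factor: the letters are examined in decreasing order, so every larger letter was already processed, and at that earlier moment it either lay outside the valid factor or triggered a boundary move (case (a) or (c)) or termination (case (b)) that expelled it, and the valid factor only shrinks. Writing $\sigma_x$ for the position of $x$, this at once pins down which of (a), (b), (c) applies: if $l<\sigma_x<r$ then $x$ exceeds both its neighbours and is a peak, so case (a); if $\sigma_x=r\neq l$ then $x$ is not a peak because $\pi_{r}<\pi_{r+1}$ by Lemma~\ref{lem:ascents_right_of_valid_factor}, so case (c); and if $\sigma_x=l$ then $x$ is not a peak because $\pi_{l-1}$ is either $\infty$ (convention, when $l=1$) or a peak found and recorded at an earlier case-(a) iteration (when $l>1$, since $l$ was set to one more than that peak's position) and in either case strictly exceeds $x$, so case (b). In particular the three cases are exhaustive and mutually exclusive, and case (a) fires exactly when the maximum of the valid factor is an interior letter of it.

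I would then partition the run of Algorithm~\ref{algorithm} into \emph{phases}: a phase is a maximal block of iterations during which the left boundary $l$ is constant, so it consists of some case-(c) iterations (possibly interspersed with do-nothing iterations) followed by exactly one case-(a) iteration, which ends the phase and advances $l$, or one case-(b) iteration, which ends the algorithm. Every phase does end, since $r$ strictly decreases at each case-(c) step and cannot fall below $l$, so eventually the acting letter is not at the right end of the valid factor and a case-(a) or case-(b) step is forced. The heart of the argument is the following claim, proved by induction on the phase index: at the start of phase $j$, the suffix $\pi_l\pi_{l+1}\cdots\pi_n$ coincides with the word $w$ that begins iteration $j$ of the recursive procedure, and the two procedures have emitted the same factors so far. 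The base case $j=1$ is immediate since both procedures start from $\pi$ itself. For the inductive step, suppose the valid factor is $\pi_l\cdots\pi_r$ when the phase-ending iteration acts, necessarily on $x_0=\max(\pi_l\cdots\pi_r)$; by Lemma~\ref{lem:ascents_right_of_valid_factor}, $\pi_r<\pi_{r+1}<\cdots<\pi_n$, so every descent of $w=\pi_l\cdots\pi_n$ occupies a position in $\{l,\dots,r-1\}$ and hence has value at most $x_0$. If the phase ends with case (a), then $x_0$ is a peak, so $x_0>\pi_{\sigma_{x_0}+1}$ shows $x_0$ is itself a descent of $w$, whence $x_0$ is the largest descent of $w$; since moreover $\sigma_{x_0}>l$ (the first observation rules out $\sigma_{x_0}=l$), the recursive procedure splits off the $\alpha$-factor $\pi_l\cdots\pi_{\sigma_{x_0}}$ and continues on $\pi_{\sigma_{x_0}+1}\cdots\pi_n$, exactly as Algorithm~\ref{algorithm} does. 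If the phase ends with case (b), then $x_0=\pi_l$; either $l=r$, so $w$ has no descent at all, or $l<r$, so $\pi_l>\pi_{l+1}$ is the largest descent of $w$ and occupies the first position of $w$; in both sub-cases the recursive procedure terminates with $\beta=w=\pi_l\cdots\pi_n$, again matching Algorithm~\ref{algorithm}. Since the matching forces the two procedures to emit a $\beta$-factor on the same phase and with the same value, they produce identical rix-factorizations.

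I expect the crux to be this inductive step, and within it the identification of the peak found by Algorithm~\ref{algorithm} with the largest descent of the recursive word $w$. The subtlety is that the valid factor $\pi_l\cdots\pi_r$ is in general only a proper prefix of $w=\pi_l\cdots\pi_n$, so one must rule out a descent of $w$ ``hiding'' among $\pi_{r},\dots,\pi_n$, which is precisely what Lemma~\ref{lem:ascents_right_of_valid_factor} supplies, and then check that the maximum of the prefix is actually a descent of $w$ and dominates all the others. The degenerate sub-cases ($l=r$; $l=1$ with $\pi_0=\infty$; and verifying that case (b) rather than case (a) triggers when $x_0=\pi_l$, which hinges on $\pi_{l-1}$ being a previously recorded peak larger than $x_0$) are routine, but I would write them out explicitly so that the exhaustiveness of cases (a), (b), (c) is not left dangling.
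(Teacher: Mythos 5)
Your proof is correct and takes essentially the same approach as the paper's: an induction showing that the suffix $\pi_l\cdots\pi_n$ (which you organize into phases and which the paper tracks as the word $v$, i.e.\ the valid factor plus everything to its right) coincides with the recursive word $w$ at each stage, with Lemma~\ref{lem:ascents_right_of_valid_factor} supplying the key fact that no descent of $w$ can hide to the right of the valid factor. Your explicit phase decomposition and the preliminary observation that the algorithm always acts on the maximum of the valid factor make the inner-loop bookkeeping somewhat more transparent than the paper's informal ``repeat the process until the largest letter is a descent,'' but the underlying argument is the same.
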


\begin{proof}
    To prove the proposition, we simultaneously apply to a permutation $\pi$ the procedures in Definition \ref{defn:rix_lin_zeng} and in Algorithm~\ref{algorithm}, showing that the terms added to the rix-factorization are the same at each step.

    Algorithm \ref{algorithm} considers a factor of $\pi$ (seen as a word), called the valid factor. Let $w$ be as in Definition \ref{defn:rix_lin_zeng} and let $v$ be the valid factor of $\pi$ plus what is to its right. At first, we have $w=v=\pi$. We show, by induction, that $v=w$ at each step of the joint execution of the procedure in Definition~\ref{defn:rix_lin_zeng} and that of Algorithm \ref{algorithm}. The base case is $v=w=\pi$, and no term is in the rix-factorization at this stage. For the induction hypothesis, we assume $v=w$, and we apply the recursive procedure in Definition \ref{defn:rix_lin_zeng} and the iterative procedure in Algorithm \ref{algorithm}. Consider the following two cases:
    \begin{enumerate}
        \item[(1)] Suppose that $w$ has a descent; then it is not an increasing word. Let $y$ be the largest descent of $w$. We have $v=w$ by the induction hypothesis, so $y$ is also the largest descent of $v$. Also, let $x$ be the largest letter of the valid factor. There are two options for $x$: either it is an ascent or it is a descent. If $x$ is a descent, then it must be the largest descent in $v$, and since what is to the right of the valid factor consists only of ascents (by Lemma \ref{lem:ascents_right_of_valid_factor}), we have $y = x$. If $x$ is an ascent and is the largest letter of the valid factor, then it must be the rightmost letter of the valid factor (otherwise, the letter to its right is a larger letter in the valid factor). Hence, in Algorithm \ref{algorithm}, the right boundary of the valid factor moves one position to the left, and $v$ is untouched. We repeat the process until the largest letter is a descent, making $x=y$.

        Let us write $w=w'yw''$. Following the procedure in Definition \ref{defn:rix_lin_zeng}, if $w'$ is empty, the algorithm stops and $\beta = w$ is added to the rix-factorization as the last rix-factor. Otherwise, if $w'$ is not empty, then we add $w'y$ to the rix-factorization and repeat the process with $w''$ in place of $w$. Also, let us write $v$ as $v'xv''$. Following Algorithm \ref{algorithm}, if $v'$ is empty then $x$ is the first letter of the valid factor, so $\beta=v$ is added to the rix-factorization and the algorithm stops. If $v'$ is not empty, then $x$ is a peak, so we add $v'x$ to the rix-factorization and repeat the process with $v''$ in place of $v$. Since $x=y$ and $v=w$, we have $v'=w'$ and $v''=w''$; thus the same terms have been added to the rix-factorization, concluding the induction step in this case.
        \item[(2)] If $w$ has no descent, then neither does $v$, so they are both increasing words. In that case, Definition \ref{defn:rix_lin_zeng} sets $\beta=w$ and terminates the process. As for $v$, since it is increasing, its largest letter is successively the largest letter of the valid factor. Thus, during the execution of Algorithm \ref{algorithm}, the right boundary of the valid factor moves one step to the left at a time, which does not impact $v$. Hence, the process in Algorithm \ref{algorithm} is repeated until the valid factor has a single letter, in which case the algorithm stops and we add $\beta = v$ to the rix-factorization.
    \end{enumerate}
    By these two cases, we have shown inductively that the terms of the rix-factorization obtained using Definition \ref{defn:rix_lin_zeng} and Algorithm \ref{algorithm} are the same, thus completing the proof.
\end{proof}

\subsection{Proof that Algorithm \ref{algorithm} gives the rixed points}
Recall from Definition \ref{d-rixpt} that the set of rixed points of a permutation $\pi$ is defined as a letter in the maximal increasing suffix of $\pi$ that is no smaller than $\beta_1(\pi)$, the first letter of the rix-factor $\beta$ of $\pi$. We show here that the set of rixed points obtained from Algorithm \ref{algorithm} is indeed the same set.

\begin{prop}
Algorithm \ref{algorithm} produces the set of rixed points as given in Definition \ref{d-rixpt}. 
\end{prop}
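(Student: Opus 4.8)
The plan is to exploit Proposition~\ref{prop:same_rix_factorization}, which we have already proved: since Algorithm~\ref{algorithm} computes the rix-factorization correctly, the last factor it appends is $\beta = \pi_l \cdots \pi_n$, where $l$ is the value of the left boundary at termination; in particular $\beta_1(\pi) = \pi_l$. Let $r$ be the value of the right boundary at termination, so the final valid factor is $\pi_l \cdots \pi_r$ and $\pi_l$ is its largest letter (letters are examined in decreasing order, so the terminating step processes the largest letter still in the valid factor). I would then describe \emph{both} the set returned by the algorithm and the set $\Rix(\pi)$ of Definition~\ref{d-rixpt} purely in terms of the two positions $l$ and $r$, and check that the two descriptions agree.

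For the algorithm's output: the left boundary only increases and the right boundary only decreases during the run, and each execution of step~(c) decreases the right boundary by exactly one, deleting the letter $\pi_r$ at the right end of the current valid factor (which is its largest letter) and putting it into $\Rix$. Hence a position is vacated on the right exactly once, at the moment its letter enters $\Rix$ via step~(c); by monotonicity of the boundaries, these are precisely the positions $r+1, r+2, \dots, n$. In addition, the terminating step~(b) adds $\pi_l$ to $\Rix$ iff $\pi_l$ is an ascent, i.e.\ $\pi_l < \pi_{l+1}$, and since $\pi_l$ is the largest letter of $\pi_l \cdots \pi_r$ this happens iff $l = r$ (when $l < r$, $\pi_{l+1}$ lies in the valid factor so $\pi_{l+1} < \pi_l$; when $l = r$, $\pi_{l+1} = \pi_{r+1} > \pi_r$ by Lemma~\ref{lem:ascents_right_of_valid_factor}). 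Thus the algorithm returns $\{\pi_{r+1}, \dots, \pi_n\}$ if $l < r$ and $\{\pi_r, \pi_{r+1}, \dots, \pi_n\}$ if $l = r$.

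For $\Rix(\pi)$ as in Definition~\ref{d-rixpt}: write $\pi_m \cdots \pi_n$ for the maximal increasing suffix of $\pi$, so that $\Rix(\pi) = \{\pi_j : m \le j \le n,\ \pi_j \ge \pi_l\}$. Lemma~\ref{lem:ascents_right_of_valid_factor} gives $\pi_r < \pi_{r+1} < \cdots < \pi_n$, whence $m \le r$; and if $l > 1$, the final jump of the left boundary was caused by a peak of $\pi$ at position $l-1$, so $\pi_{l-1} > \pi_l$ and hence $m \ge l$. Therefore $l \le m \le r$, and one checks that $l = m$ iff $l = r$ (if $l<r$ then $\pi_l = \pi_m < \pi_{m+1} = \pi_{l+1}$ contradicts $\pi_l$ being the largest letter of $\pi_l \cdots \pi_r$; the converse is immediate from $l \le m \le r$). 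Now, for $j > r$ the letter $\pi_j$ was deleted in a step~(c) at a moment when $\pi_l$ still lay in the valid factor (the left boundary was then at most $l$), so $\pi_j$---being the largest letter of that valid factor---satisfies $\pi_j > \pi_l$; and for $m \le j \le r$ the letter $\pi_j$ belongs to $\pi_l \cdots \pi_r$, whose unique largest letter is $\pi_l$, so $\pi_j \ge \pi_l$ forces $j = l$, which needs $l = m$ and hence $l = r$. Putting this together, $\Rix(\pi)$ equals $\{\pi_{r+1}, \dots, \pi_n\}$ if $l < r$ and $\{\pi_r, \pi_{r+1}, \dots, \pi_n\}$ if $l = r$, matching the previous paragraph.

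The step I expect to be the main obstacle is the bookkeeping that links the \emph{terminal} positions $l$ and $r$ to the \emph{history} of boundary moves: showing that the positions strictly to the right of the final valid factor are exactly those vacated by step~(c) moves, and that at each such move the (largest) letter removed exceeds $\pi_l$ because $\pi_l$ was still inside the valid factor at that point. Both facts rest only on the monotonicity of the two boundaries together with the decreasing order in which letters are processed; granting them, everything else is immediate from Lemma~\ref{lem:ascents_right_of_valid_factor} and Proposition~\ref{prop:same_rix_factorization}.
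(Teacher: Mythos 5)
Your proof is correct, and it takes a somewhat different route from the paper's. The paper proves the set equality by direct double inclusion: it calls $A$ the set from Definition~\ref{d-rixpt} and $B$ the algorithm's output, picks an arbitrary $y \in A$ and traces it through the algorithm to show $y \in B$, and conversely. Your approach instead computes both sets in closed form via the terminal boundary positions $(l, r)$, showing that each equals $\{\pi_{r+1},\dots,\pi_n\}$ when $l<r$ and $\{\pi_r,\dots,\pi_n\}$ when $l=r$, and that these two cases correspond exactly to whether $\beta_1(\pi)=\pi_l$ is itself a rixed point. Both arguments ultimately rest on the same two ingredients (Lemma~\ref{lem:ascents_right_of_valid_factor} and Proposition~\ref{prop:same_rix_factorization}, together with the observation that the letter being processed, if in the valid factor, is its maximum), but yours packages the bookkeeping more explicitly: it gives a clean invariant description of the algorithm's output as a suffix of $\pi$ determined by the final $r$, which makes the comparison with the ``maximal increasing suffix that is $\geq \beta_1(\pi)$'' criterion in Definition~\ref{d-rixpt} very concrete. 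The tradeoff is that you must separately verify the dichotomy $l<r$ versus $l=r$ and the equivalence $l=m \Leftrightarrow l=r$, whereas the paper's element-wise argument handles both cases uniformly; each is a reasonable price for the structure it buys. One small point worth stating explicitly in a final write-up is the fact you use implicitly throughout (e.g., when asserting that the letter removed by a step-(c) move exceeds $\pi_l$, and that $\pi_l$ is the maximum of $\pi_l\cdots\pi_r$): whenever the algorithm processes a letter $x$ lying in the valid factor, $x$ is the largest letter of that factor, because letters are examined in decreasing order and every previously examined letter that was in the valid factor has since been ejected from it by step (a) or (c). This is the same observation the paper records in the proof of Lemma~\ref{l-xbet}.
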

\begin{proof}
    We let $A$ be the set of letters in the maximal increasing suffix of $\pi$ that are no smaller than the first letter of $\beta_1(\pi)$ (so $A$ is the set of rixed points as obtained from Definition \ref{d-rixpt}). Also, let $B$ contain the successive right boundaries of the valid factor of $\pi$, as well as the left boundary of the valid factor when Algorithm \ref{algorithm} terminates if it is an ascent (so $B$ is the set of rixed points as obtained from Algorithm \ref{algorithm}). We show that $A = B$.

    Let $y \in A$. Then $y$ is larger than or equal to $\beta_1(\pi)$, which is the stopping point of Algorithm \ref{algorithm}. Hence, $y$ is considered during the execution of the Algorithm \ref{algorithm}. We also know that $y$ is part of the valid factor when it is considered, since the right boundary only excludes letters after consideration, and the left boundary only moves to the right of peaks. However, since $y$ is part of an increasing suffix, it cannot have a peak to its right nor can it be a peak itself (or a descent in general). In particular, the fact that $y$ is not a peak also implies that $y$ must either be the first or the last letter of the valid factor when it is considered. If $y$ is the first letter of the valid factor, then the algorithm stops, and $y$ is in $B$ because it is an ascent (as it is part of an increasing suffix). Otherwise, if $y$ is the last letter (but not the first), then the algorithm still adds $y$ to $B$. In any case, $y \in B$.

    We now prove the other inclusion. Let $z \in B$. Then, $z$ is either the first letter of the valid factor when the algorithm stops, or it is the last letter of the valid factor at some point during the execution of the algorithm. In the latter case, $z$ being at the end of the valid factor means that everything to its right is greater than $z$ (by Lemma \ref{lem:ascents_right_of_valid_factor}), so it is part of an increasing suffix of $\pi$. If $z$ is the first letter of the valid factor when the algorithm stops, then it is in $B$ only if is an ascent of $\pi$, in which case $z$ is also the last letter of the valid factor by the argument in Case (1) of the proof of Proposition \ref{prop:same_rix_factorization}. In either case, $z$ belongs to the maximal increasing suffix of $\pi$. Moreover, 
    we know that $z$ is considered by Algorithm \ref{algorithm} during its execution and that $\beta_1 (\pi)$ is the last letter considered prior to termination. Since the algorithm considers the letters in $\pi$ in decreasing order of their values, it follows that $z\geq \beta_1 (\pi)$. Therefore, $z\in A$.

    We have thus proved that $A = B$, so Algorithm \ref{algorithm} indeed gives the set of rixed points.
    \end{proof}
    
    We now show that the recursive definition \eqref{e-rix} for the $\rix$ statistic can be adapted to obtain a recursive algorithm for computing the set of rixed points. As a consequence, we get that $\rix(\pi)$ is indeed the cardinality of $\Rix(\pi)$ for any permutation $\pi$.\footnote{This was stated by Lin and Zeng \cite[Proposition 9 (iii)]{Lin2015} but no proof was given.}
    
    \begin{prop} \label{p-Rixrecur}
    Given a word $w = w_1 w_2 \cdots w_k$ with distinct positive integer letters, define $\Rix(w)$ in the following way: If $k=0$ \textup{(}i.e., if $w=\varnothing$\textup{)}, then $\Rix(w) = \varnothing$. Otherwise, if $w_i$ is the largest letter of $w$, then let
	\begin{equation}
	\Rix(w)\coloneqq
	\begin{cases}
		\varnothing, & \text{if }i=1<k,\\
		\{w_k\}\cup\Rix(w_1w_2\cdots w_{k-1}), & \text{if }i=k,\\
		\Rix(w_{i+1}w_{i+2}\cdots w_k), & \text{if }1<i<k.
	\end{cases} \label{e-Rix}
    \end{equation}
    Then, for any permutation $\pi$, the set $\Rix(\pi)$ obtained using the above recursive definition is indeed the set of rixed points of $\pi$.
    \end{prop}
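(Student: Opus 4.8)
The plan is to prove the proposition by strong induction on the length $k$ of the word $w$, comparing the recursion \eqref{e-Rix} directly with the output of Algorithm \ref{algorithm}. By the two preceding propositions, Algorithm \ref{algorithm} computes the set of rixed points of Definition \ref{d-rixpt}; although those results are phrased for permutations in $\mathfrak{S}_n$, Definitions \ref{defn:rix_lin_zeng}--\ref{d-rixpt}, Lemma \ref{lem:ascents_right_of_valid_factor}, and their proofs only ever reference the relative order of the letters, so the arguments extend at once to any word with distinct positive integer letters (one could also argue by standardization). It therefore suffices to show that, for every such $w$, a run of Algorithm \ref{algorithm} on $w$ returns exactly the set produced by \eqref{e-Rix}.

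The base cases $k\le 1$ are immediate, since both constructions return $\varnothing$ for $w=\varnothing$ and $\{w_1\}$ for a one-letter word. For the inductive step, let $w_i$ be the largest letter of $w$ and examine the first iteration of Algorithm \ref{algorithm}, in which $x=w_i$ is processed while the valid factor is still all of $w$ (so $l=1$ and $r=k$). Because $w_i$ is the maximum letter, it is a peak of $w$ exactly when $1<i<k$, it is the first letter of the valid factor exactly when $i=1$, and it is the last letter exactly when $i=k$; these are precisely the three branches of \eqref{e-Rix}. If $i=1<k$, then $x=w_1$ is the first letter of the valid factor but is a descent (as $w_1>w_2$), so the algorithm halts immediately with no rixed point, returning $\varnothing$. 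If $1<i<k$, then $x$ is a peak, so the left boundary advances to $i+1$ (appending $w_1\cdots w_i$ to the rix-factorization), no rixed point is recorded, and the valid factor becomes $w_{i+1}\cdots w_k$. If $i=k$ (so $k\ge 2$), then $x=w_k$ is the last but not the first letter of the valid factor, hence $w_k$ is recorded as a rixed point and the valid factor becomes $w_1\cdots w_{k-1}$. In the latter two cases the remainder of the run is, I claim, exactly a fresh run of Algorithm \ref{algorithm} on the strictly shorter word $w_{i+1}\cdots w_k$, respectively $w_1\cdots w_{k-1}$; granting this, the induction hypothesis identifies the remaining output with $\Rix(w_{i+1}\cdots w_k)$, respectively $\Rix(w_1\cdots w_{k-1})$, and together with the first iteration this matches \eqref{e-Rix} in every case.

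The crux---and the step I expect to require the most care---is the claim just invoked: once the first iteration restricts the valid factor to a subword $w'$ of $w$, the subsequent execution of Algorithm \ref{algorithm} on $w$ coincides step for step with a fresh run on $w'$. Two things must be checked. First, any later-examined letter of $w$ lying outside $w'$ fails the test $l\le\sigma_x\le r$ and is skipped without effect, while such a letter simply does not occur in a run on $w'$, so these letters may be disregarded. Second, the peak test and the terminal ascent test at the two ends of $w'$ must give the same answer whether the missing neighbor is read from within $w$ or taken to be $\infty$ as the convention $\pi_0=\pi_{n+1}=\infty$ prescribes for $w'$. At the right end this is Lemma \ref{lem:ascents_right_of_valid_factor}: the last letter of a valid factor is always an ascent, hence never a peak, and the terminal ascent test has the same outcome under either reading. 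At the left end, the first letter of a valid factor is never a peak either---if the left boundary has not yet moved it lies at position $1$ with $\pi_0=\infty$ to its left, and if the left boundary last moved past a peak $\pi_{l-1}$ then $\pi_{l-1}>\pi_l$, so $\pi_l$ again fails the peak test. With these observations the two runs agree throughout, completing the induction. Finally, since $w_k$ does not occur among $w_1,\dots,w_{k-1}$, taking cardinalities in \eqref{e-Rix} recovers the recursion \eqref{e-rix}, so $\rix(\pi)=|\Rix(\pi)|$ for every $\pi$, as Lin and Zeng asserted.
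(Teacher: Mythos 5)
Your proof is correct and takes essentially the same approach as the paper's: both compare Algorithm \ref{algorithm} step-by-step with the recursion \eqref{e-Rix}, matching the three branches of the recursion with the three cases (a)--(c) of the algorithm and invoking Lemma \ref{lem:ascents_right_of_valid_factor} to control the boundary behavior. Your write-up is more explicit than the paper's about extending the algorithm to arbitrary words and about verifying that the peak/ascent tests agree at the two ends of the valid factor, but the underlying argument is the same.
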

   
    Before proving Proposition \ref{p-Rixrecur}, let us illustrate this recursive algorithm for $\Rix$ with a couple examples. By comparing these with Examples \ref{eg-rix1-2}--\ref{eg-rix2-2}, which compute the rixed points of the same permutations but using Algorithm \ref{algorithm}, we see that the valid factors of Algorithm \ref{algorithm} are precisely the words $w$ at each step of \eqref{e-Rix}. This will be key to our proof of Proposition \ref{p-Rixrecur}.

    \begin{example}[Examples \ref{eg-rix1} and \ref{eg-rix1-2} continued] \label{eg-rix1-3}
       We use Proposition \ref{p-Rixrecur} to compute the rixed points of $\pi = 142785369$:
        \begin{align*}
        \Rix(14278536\mathbf{9}) &= \{9\} \cup \Rix(1427\mathbf{8}536) \\
                        &= \{9\} \cup \Rix(53\mathbf{6}) \\
                        &= \{6,9\} \cup \Rix(\mathbf{5}3) \\
                        &= \{6,9\}.
        \end{align*}
    \end{example}

    \begin{example}[Examples \ref{eg-rix2} and \ref{eg-rix2-2} continued] \label{eg-rix2-3}
        We use Proposition \ref{p-Rixrecur} to compute the rixed points of $\pi = 23816457$:
        \begin{align*}
        \Rix(23\mathbf{8}16457) &= \Rix(1645\mathbf{7}) \\
                        &= \{7\} \cup \Rix(1\mathbf{6}45) \\
                        &= \{7\} \cup \Rix(4\mathbf{5}) \\
                        &= \{5,7\} \cup \Rix(\mathbf{4}) \\
                        &= \{4,5,7\}.
        \end{align*}
    \end{example}
    
    \begin{proof}[of Proposition \ref{p-Rixrecur}]
         We show using induction that the three cases in \eqref{e-Rix} correspond (with a slight adjustment) to the three cases in Algorithm \ref{algorithm}, and that the word $w$ changes in the same way as the valid factor in 
         Algorithm \ref{algorithm}. Both procedures begin with the entire permutation $\pi$, which establishes the base case. For our induction hypothesis, suppose that $w = w_1\cdots w_{k}$ is the valid factor of $\pi$. Let $w_i$ be the largest letter of $w$. Consider the following cases:
        \begin{itemize}
            \item If $w_i$ is the first letter (but not the last) of $w$, then there are no rixed points in $w$ according to \eqref{e-Rix}. Note that if $w_i$ is both an ascent of $\pi$ and the first letter of $w$, then it is the only (and therefore last) letter of $w$ because it is the largest letter in $w$. Hence, this case corresponds to case (b) of Algorithm \ref{algorithm} when $w_i$ is not an ascent of $\pi$, in which no rixed points are added and the algorithm terminates.
            \item If $w_i$ is the last letter of $w$, then \eqref{e-Rix} adds $w_i$ as a rixed point and $w$ becomes $w_1\cdots w_{k-1}$. Note that if $w_i$ is the only letter of $w$, then $w_1\cdots w_{k-1} = \varnothing$ and the procedure in \eqref{e-Rix} stops. This corresponds to case (c) of Algorithm \ref{algorithm}, as well as case (b) when $w_i$ is the only letter of $w$ (and thus an ascent of $\pi$ by Lemma \ref{lem:ascents_right_of_valid_factor}).
            \item Otherwise, the largest letter $w_i$ of $w$ is neither its first nor last (which means that $w_i$ is a peak of $\pi$), so $w$ becomes $w_{i+1}\cdots w_k$ and no rixed point is added. This corresponds to case (a) of Algorithm \ref{algorithm}.
        \end{itemize}
        Because the valid factor and the set of rixed points change in the same way at each step of both algorithms, we get the same set of rixed points at the end.
    \end{proof}

\subsection{A characterization of rixed points}

If $y$ is a descent of $\pi$, let us call $y$ a \textit{leading descent} of $\pi$ if there is no larger descent of $\pi$ appearing after $y$. For example, the leading descents of $\pi=194376528$ are $9$, $7$, $6$, and $5$, whereas $4$ is a descent of $\pi$ but not a leading descent because the larger letters $7$, $6$, and $5$ are all descents of $\pi$ that appear after $4$.

Leading descents are useful in the study of rixed points. For example, note that the $x$ in step (2) of Definition \ref{defn:rix_lin_zeng} is always a leading descent of $\pi$. We will now use leading descents to define the ``maximal descending ridge'' of a permutation, which plays a role in our subsequent characterization of rixed points.

 \begin{defn}
     In this definition, we will prepend $\infty$ to $\pi$ and consider $\infty$ to be the first leading descent of $\infty\pi$. We define the \textit{maximal descending ridge} of a permutation $\pi$ in the following way:
     \begin{itemize}
        \item If $\pi_i$ and $\pi_{i+1}$ are the leftmost pair of leading descents in adjacent positions, then the maximal descending ridge of $\pi$ is the prefix of $\infty\pi$ ending with $\pi_i$.
        \item If $\infty\pi$ does not have two leading descents in adjacent positions, then the maximal descending ridge of $\pi$ is the prefix of $\infty\pi$ ending with the rightmost descent of $\pi$.
     \end{itemize}
 \end{defn}

\noindent To illustrate, the maximal descending ridges of the permutations $142785369$ and $23816457$ (from the earlier examples) are $\infty 14278$ and $\infty 23816$, respectively.

\begin{thm}
\label{t-rixptchar}
    Let $\pi \in \mathfrak{S}_n$. The rixed points of $\pi$ are characterized by the following:
    \begin{enumerate}
        \item[\normalfont{(a)}] If $y\in \Rix(\pi)$, then either $y$ is a double ascent of $\pi$, or $y$ is the rightmost valley of $\pi$ and is either the first letter of $\pi$ or immediately follows a peak in $\pi$.
        \item[\normalfont{(b)}] A letter $y$ satisfying the above conditions is a rixed point of $\pi$ if and only if none of $y+1, y+2, \ldots, n$ is immediately to the right of the maximal descending ridge of $\pi$ or appears as a peak to the right of $y$.
    \end{enumerate}
\end{thm}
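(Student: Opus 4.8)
The plan is to reduce everything to Definition~\ref{d-rixpt} together with one auxiliary fact: the letter immediately to the right of the maximal descending ridge of $\pi$ is exactly $\beta_1(\pi)$, the first letter of the rix-factor $\beta$. I would prove this fact first. In the rix-factorization algorithm of Definition~\ref{defn:rix_lin_zeng}, the letter $x$ removed at stage $i$ is the largest descent of the current word, which unwinds to the $i$-th leading descent $d_i$ of $\pi$ read left to right (with the prepended $\infty=d_0$ in the role of the $0$-th leading descent, matching the definition of the ridge). The procedure sets $\beta$ at the first stage $i$ for which either the current word has no descent --- equivalently, $d_{i-1}$ is the rightmost descent of $\pi$, so $\beta_1(\pi)$ is the first letter of the maximal increasing suffix, i.e.\ the letter just after the rightmost descent --- or the part of the current word before $d_i$ is empty --- equivalently, $d_{i-1}$ and $d_i$ sit in adjacent positions, so $\beta_1(\pi)=d_i$, the second member of the leftmost adjacent pair of leading descents. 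Matching these two alternatives against the two clauses defining the maximal descending ridge shows $\beta_1(\pi)$ is the letter immediately to its right. (The degenerate case where $\pi$ is increasing should be handled separately, the ridge then being $\infty$ alone.)

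For part (a): by Definition~\ref{d-rixpt} every rixed point lies in the maximal increasing suffix $\pi_j\cdots\pi_n$ of $\pi$. Inside this suffix each $\pi_{j'}$ with $j'>j$ is a double ascent, while $\pi_j$ is a valley, and in fact the \emph{rightmost} valley of $\pi$ since everything to its right is a double ascent; so a rixed point is a double ascent or the rightmost valley. Suppose $\pi_j$ is the rightmost valley with $j\geq 2$ and $\pi_{j-1}$ not a peak, hence a double descent. In Algorithm~\ref{algorithm} the left boundary of the valid factor advances only past peaks, and no position $\geq j-1$ carries a peak, so the left boundary never exceeds $j-1$; therefore whenever the valid factor contains $\pi_j$ it also contains $\pi_{j-1}>\pi_j$, so $\pi_j$ is never the maximum of the valid factor containing it, and hence is never added to $\Rix(\pi)$. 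This gives (a).

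For part (b): by the auxiliary fact, ``none of $y+1,\dots,n$ is immediately to the right of the maximal descending ridge'' means exactly $\beta_1(\pi)\leq y$, and ``none of $y+1,\dots,n$ appears as a peak to the right of $y$'' means exactly that no peak of value greater than $y$ lies to the right of $y$. Since Definition~\ref{d-rixpt} says $y\in\Rix(\pi)$ iff $y$ lies in the maximal increasing suffix and $y\geq\beta_1(\pi)$, it remains only to show: for a letter $y$ satisfying (a), $y$ lies in the maximal increasing suffix iff no peak of value greater than $y$ lies to the right of $y$. The forward direction is immediate, as nothing to the right of $y$ is a descent. For the converse, if $y$ satisfies (a) but is not in the suffix, then $y$ is a double ascent (the rightmost valley always lies in the suffix) and the rightmost descent $\pi_m$ of $\pi$ lies to its right, say $y=\pi_i$ with $i<m$. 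Take $M=\max\{\pi_{i+1},\dots,\pi_m\}$, at position $p$; then $M>y$ since $\pi_{i+1}>\pi_i=y$, and $M$ is a peak, its left neighbour being smaller ($\pi_{p-1}\in\{\pi_{i+1},\dots,\pi_m\}$ if $p>i+1$, and $\pi_{p-1}=y<\pi_{i+1}=M$ if $p=i+1$) and its right neighbour being smaller (either $\pi_{p+1}\in\{\pi_{i+1},\dots,\pi_m\}$, or $p=m$ and $\pi_{m+1}<\pi_m=M$). So a peak of value greater than $y$ lies to the right of $y$, completing the proof.

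I expect the main obstacle to be the auxiliary fact identifying the letter right of the maximal descending ridge with $\beta_1(\pi)$: it requires the two stopping conditions of the rix-factorization algorithm to be matched precisely with the two clauses defining the maximal descending ridge, with careful bookkeeping of leading descents, the prepended $\infty$, and the degenerate increasing case. Once that is in hand, parts (a) and (b) follow from Definition~\ref{d-rixpt} (and, for (a), the behaviour of Algorithm~\ref{algorithm}) without further surprises.
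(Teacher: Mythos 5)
Your proof is correct, and for part~(b) it takes a genuinely cleaner route than the paper's. The paper establishes the identification of $\beta_1(\pi)$ with the letter immediately following the maximal descending ridge only in the middle of its proof of~(b), via a three-step analysis of the stopping point $q$ of Algorithm~\ref{algorithm}; you instead isolate this as an upfront auxiliary fact and prove it directly from the recursive procedure in Definition~\ref{defn:rix_lin_zeng}, by observing that the descent removed at stage $i$ is exactly the $i$-th leading descent read left to right and then matching the two stopping conditions of that procedure against the two clauses defining the ridge. With that fact in hand, part~(b) reduces to Definition~\ref{d-rixpt} plus a short elementary lemma (a letter satisfying~(a) lies in the maximal increasing suffix iff no larger peak occurs to its right, proved by exhibiting $\max\{\pi_{i+1},\dots,\pi_m\}$ as such a peak), entirely avoiding the back-and-forth with the valid factor that the paper's proof of~(b) requires. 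Your part~(a) is essentially the paper's argument phrased via the contrapositive. Two minor points to tidy in a final write-up: as you note, the case of increasing $\pi$ needs a separate remark, since the ridge definition's second clause refers to a rightmost descent that does not exist (the paper likewise falls back to the convention that the ridge is then just $\infty$); and your part~(a) argument implicitly invokes the invariant that whenever Algorithm~\ref{algorithm} processes a letter lying in the valid factor, that letter is the maximum of the valid factor --- this is true and used tacitly in the paper as well, but is worth stating explicitly.
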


Before giving the proof, let us use the characterization given by Theorem \ref{t-rixptchar} to determine the rixed points of the two permutations from the earlier examples.

    \begin{example}[Examples \ref{eg-rix1}, \ref{eg-rix1-2}, and \ref{eg-rix1-3} continued] \label{eg-rix1-4}
        Take $\pi = 142785369$, whose maximal descending ridge is $\infty 14278$. The letters of $\pi$ meeting the requirements in Theorem \ref{t-rixptchar} (a) are $7$, $6$, and $9$, but $7$ is not a rixed point because $8$ is a peak immediately following $7$.\footnote{We also know that $7$ cannot be a rixed point because it is not part of an increasing suffix of $\pi$.} On the other hand, $6$ and $9$ are both rixed points because none of $7$, $8$, and $9$ is immediately to the right of the maximal descending ridge or is a peak to the right of $6$, and there is no letter in $\pi$ larger than $9$. Thus, $\Rix(\pi)=\{ 6,9 \}$.
    \end{example}

    \begin{example}[Examples \ref{eg-rix2}, \ref{eg-rix2-2}, and \ref{eg-rix2-3} continued] \label{eg-rix2-4}
        Take $\pi = 23816457$, whose maximal descending ridge is $\infty 23816$. The letters of $\pi$ meeting the requirements in Theorem \ref{t-rixptchar} (a) are $3$, $4$, $5$, and $7$, and then it is readily verified that $3$ is the only one which does not meet the requirements in Theorem \ref{t-rixptchar} (b). Thus, $\Rix(\pi)=\{ 4,5,7 \}$.
    \end{example}

\begin{remark}
In Examples \ref{eg-rix1-4} and \ref{eg-rix2-4}, the letter following the maximal descending ridge of $\pi$ coincides with the beginning of the $\beta$ rix-factor of $\pi$. This will be confirmed in the proof of Theorem~\ref{t-rixptchar} below, as we shall show that if Algorithm \ref{algorithm} terminates while considering $x$ (so that $x$ is the first letter $\beta_1 (\pi)$ of $\beta$), then $x$ immediately follows the maximal descending ridge of $\pi$. Therefore, we can also compute the rixed points of a permutation $\pi$ by using the maximal descending ridge to determine $\beta_1 (\pi)$ and then applying Definition \ref{d-rixpt}.
\end{remark}

 \begin{proof}[of Theorem \ref{t-rixptchar}]
     We first prove (a). By Definition \ref{d-rixpt}, every rixed point $y$ of $\pi$ belongs to an increasing suffix of $\pi$, so $y$ is an ascent of $\pi$. Every ascent is either a double ascent or a valley, and if $y$ is a valley, then it is the rightmost valley of $\pi$ as it must be the first letter of the maximal increasing suffix of $\pi$. Furthermore, in Algorithm \ref{algorithm}, we see that a valley $y$ is added as a rixed point only when it is the first letter of the valid factor; if $y$ is instead the last letter (but not the first) of the valid factor, then $y$ would not be the largest letter of the valid factor, contradicting the fact that Algorithm \ref{algorithm} considers the letters of $\pi$ in decreasing order. Since $y$ is the first letter of the valid factor, it follows that it is either the first letter of $\pi$ or it is immediately to the right of a peak (because this is how the left boundary is moved in Algorithm \ref{algorithm}). Hence, part (a) is proven. 
     
     To prove (b), let us first assume that $y$ is a rixed point of $\pi$, and show that none of $y+1, y+2, \ldots, n$ is immediately to the right of the maximal descending ridge of $\pi$ or appears as a peak to the right of $y$. For this, recall again that the letters of $\pi$ are inspected in decreasing order by Algorithm \ref{algorithm} until we reach the stopping condition, which is when the largest letter of the valid factor is its first letter. Let $q$ be the largest (and thus first) letter of the valid factor when the algorithm stops. None of $q+1, q+2, \ldots, n$ is a peak to the right of $q$; otherwise, the left boundary of the valid factor would have been moved to the right of $q$, and so $q$ would not be in the valid factor. We also know that $y$ is weakly to the right of $q$ and that $y\geq q$ because $y$ is a rixed point, so $\{y+1, y+2, \ldots, n\}$ is a subset of $\{q+1, q+2, \ldots, n \}$, and thus it follows from the analogous statement for $q$ that none of $y+1, y+2, \ldots, n$ is a peak to the right of $y$.
     
     By the same reasoning as in part (a), either $q$ is the first letter of $\pi$, or it immediately follows a peak of $\pi$---call it $z$. In the latter case, we claim that $z$ is the last letter of the maximal descending ridge of $\pi$, which we prove in the following steps:
     \begin{enumerate}
         \item[(1)] We show that $z$ is a leading descent of $\pi$. Otherwise, if $z^{\prime}$ were the largest descent to the right of $z$ that is greater than $z$, then $z^{\prime}$ would be a peak, and so the left boundary of Algorithm~\ref{algorithm} would have moved to the right of $z^{\prime}$ upon considering $z^{\prime}$. Hence, no such $z^{\prime}$ exists, so $z$ is indeed a leading descent.
         \item[(2)] We show that there cannot be leading descents in adjacent positions weakly to the left of $z$, which would imply that $z$ belongs to the maximal descending ridge. Suppose otherwise, and let $z^{\prime}$ and $z^{\prime\prime}$ be the leftmost pair of leading descents in adjacent positions. (Note that $z^\prime$ can be $\infty$, and $z^{\prime\prime}$ can be $z$ unless $z^\prime=\infty$.) Since $z^{\prime} > z^{\prime\prime} \geq z > q$ and Algorithm \ref{algorithm} terminates while considering $q$, the algorithm must consider $z^{\prime}$ (unless $z^\prime=\infty$) and $z^{\prime\prime}$ at some point. If $z^\prime=\infty$, then $z^{\prime\prime}$ is the first letter of $\pi$ and thus the first letter of the valid factor at the beginning of Algorithm $\ref{algorithm}$. If $z^{\prime}\neq \infty$, then $z^{\prime}$ is a peak, so the left boundary of the valid factor would move to $z^{\prime\prime}$ upon considering $z^{\prime}$. Either way, the valid factor will begin with $z^{\prime\prime}$ at some point during the execution of the algorithm. And since $z^{\prime\prime}$ is a leading descent, there are no peaks larger than $z^{\prime\prime}$ to the right of $z^{\prime\prime}$, so the left boundary would stay at $z^{\prime\prime}$ until $z^{\prime\prime}$ is considered by the algorithm, at which point the algorithm terminates. This contradicts the assumption that the algorithm terminates while considering $q$, so no such $z^{\prime}$ and $z^{\prime\prime}$ exist. 
         \item[(3)] If $q$ is a descent, then $q$ is a leading descent by the same reasoning as in (1), so $z$ is the last letter of the maximal descending ridge. If $q$ is an ascent, then it is added by Algorithm \ref{algorithm} as a rixed point, and since the rixed points form an increasing suffix of $\pi$, this means that $z$ is the rightmost descent of $\pi$ and therefore the last letter of the maximal descending ridge.
     \end{enumerate}
    Note that if $q$ is the first letter of $\pi$, then the argument in (3) suffices to show that the maximal descending ridge is $\infty$. In either case, $q$ is immediately to the right of the maximal descending ridge, and since $y\geq q$, it follows that none of $y+1, y+2, \ldots, n$ is immediately to the right of the maximal descending ridge. 

     Conversely, suppose that $y$ satisfies the conditions in part (a), and that none of $y+1, y+2, \ldots, n$ immediately follows the maximal descending ridge of $\pi$ or appears as a peak to the right of $y$. We wish to show that $y$ is a rixed point of $\pi$. First, note that the left boundary of the valid factor never moves to the right of $y$ during the execution of Algorithm \ref{algorithm}, since no letter larger than $y$ is a peak to the right of $y$. Also, if the algorithm were to terminate while considering a letter $q$ larger than $y$, then we know from an argument earlier in this proof that $q$ immediately follows the maximal descending ridge, which is a contradiction. Hence, $y$ is considered by Algorithm \ref{algorithm} at some point during its execution, and $y$ appears in the valid factor while under consideration.

     Now, recall that the conditions in part (a) imply that $y$ is an ascent and therefore not a peak, so $y$ is either the first or the last letter of the valid factor when it is considered by the algorithm. If $y$ is the first letter, then the fact that it is an ascent guarantees that it is a rixed point. If $y$ is the last letter (but not the first) of the valid factor, then Algorithm \ref{algorithm} adds it as a rixed point as well. Thus the proof of part (b) is complete.
\end{proof} 
\subsection{Properties of rixed points and the rix-factorization}

Before proceeding, we shall give a few more properties of rixed points and the rix-factorization which will be used later in this paper.

\begin{lem} 
\label{l-valdasc}
Let $\pi$ be a permutation with rix-factorization $\pi=\alpha_1\cdots\alpha_k\beta$. Let $y\in \Rix(\pi)$. Then $y$ is a valley of $\pi$ if $y=\beta_1(\pi)$, and is a double ascent of $\pi$ otherwise.
\end{lem}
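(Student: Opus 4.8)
The plan is to read both assertions directly off Algorithm \ref{algorithm}, exploiting the fact that the set $\Rix$ acquires elements in exactly two ways: in case (c), when a ``right boundary'' $x = \pi_r$ is added, and in case (b), when the final left boundary $x = \pi_l = \beta_1(\pi)$ is added (provided it is an ascent). The first step is to check that these two mechanisms are disjoint and that they match up with the two cases of the lemma. From the pseudocode, each time case (a) or (c) fires the valid factor interval strictly shrinks (left boundary $l \mapsto i+1 > l$, right boundary $r \mapsto i-1 < r$), so the successive valid factors are nested intervals; in particular, once case (c) removes $\pi_r$, position $r$ never lies in a later valid factor. Since $\beta_1(\pi)$ is the unique letter of the valid factor at termination, every letter added in case (c) is therefore distinct from $\beta_1(\pi)$, and conversely every rixed point $y \neq \beta_1(\pi)$ must have been added in case (c).

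Next I would show that any letter $x = \pi_r$ added in case (c) is a double ascent of $\pi$. When $x$ is examined in case (c), it is the largest letter of the current valid factor $\pi_l \cdots \pi_r$: any larger letter of the valid factor was examined at an earlier stage, at which time it still lay in the (then larger) valid factor, and so it would have been removed by case (a) or (c) or would have ended the algorithm via case (b) — in all cases a contradiction. Hence $\pi_{r-1} < \pi_r = x$, and by Lemma \ref{lem:ascents_right_of_valid_factor} we have $\pi_r < \pi_{r+1}$ (with the convention $\pi_{n+1} = \infty$ if $r = n$). Thus $\pi_{r-1} < x < \pi_{r+1}$, so $x$ is a double ascent.

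Then I would show that $\beta_1(\pi)$, when it belongs to $\Rix(\pi)$, is a valley. In that case $x = \pi_l = \beta_1(\pi)$ is the letter added in case (b), so it is an ascent of $\pi$ and $\pi_l < \pi_{l+1}$. The left boundary sits at position $l$ for one of two reasons: either $l = 1$, so $\pi_{l-1} = \pi_0 = \infty > \pi_l$, or it was advanced to $l$ by an earlier application of case (a) past a peak at position $l-1$, so $\pi_{l-1}$ is a peak and $\pi_{l-1} > \pi_l$. Either way $\pi_{l-1} > \pi_l < \pi_{l+1}$, so $\beta_1(\pi)$ is a valley. Combining the three steps: if $y = \beta_1(\pi) \in \Rix(\pi)$ then $y$ came from case (b) and is a valley, while if $y \in \Rix(\pi)$ with $y \neq \beta_1(\pi)$ then $y$ came from case (c) and is a double ascent.

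The main obstacle is the first step: the bookkeeping that cleanly separates the two ways $\Rix$ grows, and in particular the verification that a letter removed as a right boundary can never resurface as the first letter of a later valid factor (hence is never $\beta_1(\pi)$). Everything afterwards is a short local inspection of the neighbours of $x$ in $\pi$. One could instead route the argument through Theorem \ref{t-rixptchar}(a), which already guarantees that every rixed point is a double ascent or the unique rightmost valley, reducing the task to showing that $\beta_1(\pi)$ is never a double ascent and that no rixed point other than $\beta_1(\pi)$ is a valley; but the bookkeeping involved is essentially the same, so I would favour the direct argument above.
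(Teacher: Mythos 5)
Your proposal is correct in substance and uses the same underlying machinery as the paper (tracing Algorithm~\ref{algorithm} and invoking Lemma~\ref{lem:ascents_right_of_valid_factor}), but it is organized differently: the paper first cites Theorem~\ref{t-rixptchar}(a) to restrict a rixed point to being a valley or a double ascent, and then determines which case applies by showing that $\beta_1(\pi)$ is preceded by a descent (or $\infty$) while a rixed point $y\neq\beta_1(\pi)$ is not the first letter and does not follow a peak. You instead re-derive the needed local facts inline, showing directly that a case-(c) letter satisfies $\pi_{r-1}<\pi_r<\pi_{r+1}$ and that the case-(b) letter satisfies $\pi_{l-1}>\pi_l<\pi_{l+1}$. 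Your version is more self-contained at the cost of some extra bookkeeping; the paper's is shorter because it leans on the previously established Theorem~\ref{t-rixptchar}(a). Both ultimately rest on the same two facts about the algorithm, so this is the same approach with a different division of labor.

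One small slip to flag: the valid factor at termination need not consist of a single letter (for $\pi=142785369$ the algorithm terminates while the valid factor is $53$), so the claim that ``$\beta_1(\pi)$ is the unique letter of the valid factor at termination'' is false as stated. What your argument actually needs, and what it does supply, is only that the position $l$ of $\beta_1(\pi)$ still lies in the valid factor at termination, while any position removed by case~(c) has been permanently excluded from every subsequent valid factor; hence $\beta_1(\pi)$ cannot have been added via case~(c), and conversely any rixed point other than $\beta_1(\pi)$ must have been. With that phrasing corrected, the proof goes through.
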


\begin{proof}
Recall from Theorem \ref{t-rixptchar} (a) that a rixed point of $\pi$ is either a valley or a double ascent of $\pi$. By the algorithm in Definition \ref{defn:rix_lin_zeng}, $\beta_1(\pi)$ must either be the first letter of $\pi$ or is immediately preceded by a descent. Hence, if $y=\beta_1(\pi)$, then $y$ is a valley of $\pi$.

Now, suppose that $y\neq\beta_1(\pi)$, so that $y>\beta_1(\pi)$. Then $y$ is added to the set of rixed points in Algorithm \ref{algorithm} when it is the last letter of the valid factor, which is subsequently set to $\pi_l\cdots\pi_r$ where $\pi_r$ is the letter immediately preceding $y$. If $\pi_r$ is a peak of $\pi$, then in particular $\pi_r>y$ and so the algorithm must have examined $x=\pi_r$ prior to $x=y$, at which stage the valid factor would have been set to begin with $y$. This means that the algorithm would terminate at $x=y$ and $y$ would be the first letter of $\beta$, a contradiction. Therefore, $y$ is not the first letter of $\pi$ and does not immediately follow a peak, so $y$ is a double ascent of $\pi$ by Theorem \ref{t-rixptchar}.
\end{proof}

\begin{lem} \label{l-leaddes}
Let $\pi$ be a permutation with rix-factorization $\pi=\alpha_1\cdots\alpha_k\beta$. If $y$ is a leading descent of $\pi$, then either $y$ is an entry of $\beta$ or $y$ is the last letter of $\alpha_i$ for some $1\leq i\leq k$.
\end{lem}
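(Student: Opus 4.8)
The plan is to argue directly from the recursive algorithm in Definition~\ref{defn:rix_lin_zeng}, tracking how leading descents get consumed as rix-factors are peeled off. Set $w^{(0)}\coloneqq\pi$, and for $j\geq 1$ let $w^{(j)}$ denote the word in hand after the $j$-th pass through steps (2)--(3), so that $w^{(j)}=(w^{(j-1)})''$ is the portion of $w^{(j-1)}$ lying to the right of its largest descent. A one-line induction shows that each $w^{(j)}$ is a contiguous suffix of $\pi$ (a suffix of a suffix of $\pi$ is a suffix of $\pi$). The key consequence is that adjacencies transfer: if a letter $z$ is not the last letter of $w^{(j)}$, then its successor in $w^{(j)}$ equals its successor in $\pi$; in particular a non-final descent of $w^{(j)}$ is a descent of $\pi$, and conversely a descent of $\pi$ both of whose relevant letters lie in $w^{(j)}$ is a descent of $w^{(j)}$.

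Now let $y$ be a leading descent of $\pi$. Every letter of $\pi$ belongs to exactly one rix-factor, so if $y$ is an entry of $\beta$ we are done; assume instead that $y$ is an entry of some $\alpha_i$. Writing $\alpha_i=w'x$ with $w^{(i-1)}=w'xw''$ and $x$ the largest descent of $w^{(i-1)}$, the last letter of $\alpha_i$ is $x$, so it suffices to rule out $y\in w'$. Suppose for contradiction that $y\in w'$, i.e.\ $y$ occurs strictly to the left of $x$ in $\pi$. Since $x$ is a descent of $w^{(i-1)}$ it is not its last letter, so by the transfer principle $x$ is a descent of $\pi$, and it lies to the right of $y$. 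Moreover $y$ has a successor inside $w^{(i-1)}$ (another letter of $w'$, or $x$ itself if $y$ is the last letter of $w'$), which by the transfer principle is also the $\pi$-successor of $y$; as $y$ is a descent of $\pi$, this successor is smaller than $y$, so $y$ is a descent of $w^{(i-1)}$. Because $x$ is the \emph{largest} descent of $w^{(i-1)}$ and $y\neq x$, we conclude $y<x$. But then $x>y$ is a descent of $\pi$ occurring after $y$, contradicting the assumption that $y$ is a leading descent. Hence $y=x$, so $y$ is the last letter of $\alpha_i$.

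The argument is short, and the only delicate point is the positional bookkeeping: one must keep firmly in mind that every $w^{(j)}$ occupies a contiguous block of positions at the right end of $\pi$, so that descent/ascent status and immediate neighbors pass back and forth between $w^{(j)}$ and $\pi$ without distortion. This is also what collapses the potential case split inside the contradiction argument — whether $y$ sits in the interior of $w'$ or immediately before $x$, we only ever invoke the successor of $y$ within $w^{(i-1)}$, which exists precisely because $y\in w'$. (One could instead run essentially the same reasoning through Algorithm~\ref{algorithm}, using that the last letter of each $\alpha_i$ is a peak and that the successive valid factors coincide with the words $w^{(j)}$, but the recursive formulation is the more economical route here.)
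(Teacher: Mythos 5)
Your proof is correct, and it pursues essentially the same contradiction as the paper: assuming $y$ lies in some $\alpha_i$ but is not its last letter $x$, you show $x$ is a descent of $\pi$ to the right of $y$ with $x>y$, contradicting that $y$ is a leading descent. The one small variation is how you get $y<x$: the paper asserts (without detail) that $x$ is the largest letter of $\alpha_i$, whereas you observe that $y$ is itself a descent of $w^{(i-1)}$ and invoke that $x$ is the \emph{largest descent} of $w^{(i-1)}$, a fact read off directly from Definition~\ref{defn:rix_lin_zeng}; your route is a bit more self-contained but not a fundamentally different argument.
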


\begin{proof}
Suppose that $y$ is a leading descent of $\pi$ but does not belong to $\beta$. Then $y$ belongs to a rix-factor $\alpha_i$ of $\pi$. It is evident from both Definition \ref{defn:rix_lin_zeng} and Algorithm \ref{algorithm} that the last letter of $\alpha_i$ is the largest letter of $\alpha_i$ and is a descent of $\pi$. So, if $y$ were not the last letter of $\alpha_i$, then the last letter of $\alpha_i$ would be a descent of $\pi$ to the right of $y$ which is larger than $y$, contradicting the assumption that $y$ is a leading descent of $\pi$.
\end{proof}

\begin{lem} \label{l-xbet} 
Let $\pi$ be a permutation with rix-factorization $\pi=\alpha_1 \alpha_2 \cdots \alpha_k \beta$. The following are
equivalent:
\begin{enumerate}
\item[\normalfont(a)]  The rix-factor $\beta$ is increasing.
\item[\normalfont(b)]  Every letter of $\beta$ is a rixed point of $\pi$.
\item[\normalfont(c)]  Every letter of $\beta$ is either a rixed point of $\pi$ or is $\beta_1(\pi)$.
\item[\normalfont(d)]  $\beta_1(\pi)$ is a rixed point of $\pi$.
\end{enumerate}
Furthermore, let $y$ be any letter of $\beta$ that is neither $\beta_1(\pi)$ nor a rixed point. Then $y<\beta_1(\pi)$.
\end{lem}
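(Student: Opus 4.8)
The plan is to prove the four equivalences via the cycle $(a)\Rightarrow(b)\Rightarrow(c)\Rightarrow(a)$, then establish $(a)\Leftrightarrow(d)$ directly, and finally deduce the ``furthermore'' claim. Throughout I will lean on two structural facts that come straight from Definition \ref{defn:rix_lin_zeng}: first, $\beta$ is always a suffix of $\pi$ (each pass of the algorithm replaces the working word $w$ by a suffix of itself); second, $\beta$ is either an increasing word, or has the form $\beta=\beta_1(\pi)\,w''$ with $w''\neq\varnothing$ and $\beta_1(\pi)$ equal to the largest descent of the word $\beta$ --- these being the two ways the algorithm can terminate, in steps (2) and (3) respectively. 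I will also freely use Definition \ref{d-rixpt}: the rixed points of $\pi$ are exactly the letters of the maximal increasing suffix of $\pi$ that are $\geq\beta_1(\pi)$.

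For the easy implications: $(a)\Rightarrow(b)$ holds because an increasing $\beta$ is an increasing suffix of $\pi$, hence lies inside the maximal increasing suffix, and its smallest letter is $\beta_1(\pi)$, so every letter of $\beta$ meets both conditions of Definition \ref{d-rixpt}; $(b)\Rightarrow(c)$ is trivial; and $(c)\Rightarrow(a)$ holds because (c) together with the fact that rixed points are $\geq\beta_1(\pi)$ forces $\beta_1(\pi)$ to be the smallest letter of $\beta$, which rules out the non-increasing form $\beta=\beta_1(\pi)\,w''$ (there $\beta_1(\pi)$ is a descent of the word $\beta$, hence strictly larger than the next letter). For $(a)\Leftrightarrow(d)$: $(a)\Rightarrow(d)$ is immediate from $(a)\Rightarrow(b)$; and $(d)\Rightarrow(a)$ is proved contrapositively, by showing that if $\beta$ is not increasing then $\beta_1(\pi)\notin\Rix(\pi)$. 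Indeed, in that case $\beta_1(\pi)$ is a descent of the word $\beta$ and is not its last letter, so --- since $\beta$ is a suffix of $\pi$ --- $\beta_1(\pi)$ is a descent of $\pi$, hence not an ascent, hence lies in no increasing suffix of $\pi$.

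For the ``furthermore'' claim, suppose $y$ is a letter of $\beta$ with $y\neq\beta_1(\pi)$ and $y\notin\Rix(\pi)$. Then (b) fails, so by the equivalences $\beta$ is not increasing; write $\beta=\beta_1(\pi)\,w''$, so every descent of the word $\beta$ is $\leq\beta_1(\pi)$. The key observation is that the letters of $\beta$ exceeding $\beta_1(\pi)$ form an increasing suffix of $\beta$: if $\beta_j>\beta_1(\pi)$ and $\beta_j$ is not the last letter of $\beta$, then $\beta_j$ cannot be a descent of $\beta$, so $\beta_j<\beta_{j+1}$, whence $\beta_{j+1}>\beta_j>\beta_1(\pi)$; inducting rightward, the positions $j$ with $\beta_j>\beta_1(\pi)$ form a suffix of positions on which $\beta$ is strictly increasing. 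Since $\beta$ is a suffix of $\pi$, these letters form an increasing suffix of $\pi$, so they all lie in the maximal increasing suffix of $\pi$. Hence if we had $y>\beta_1(\pi)$, then $y$ would lie in the maximal increasing suffix of $\pi$ and exceed $\beta_1(\pi)$, making it a rixed point by Definition \ref{d-rixpt} --- contradicting $y\notin\Rix(\pi)$. As $y$ and $\beta_1(\pi)$ are distinct letters, we conclude $y<\beta_1(\pi)$.

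The main obstacle is the ``furthermore'' part --- specifically the staircase observation that once a letter of $\beta$ climbs above $\beta_1(\pi)$ it keeps climbing, which is precisely where the special structure of the last rix-factor (its first letter being its largest descent) does the work; all the remaining steps are routine bookkeeping with Definitions \ref{defn:rix_lin_zeng} and \ref{d-rixpt}.
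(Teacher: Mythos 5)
Your proof is correct. The chain of equivalences is handled via $(a)\Rightarrow(b)\Rightarrow(c)\Rightarrow(a)$ and $(a)\Leftrightarrow(d)$, whereas the paper treats $(a)\Leftrightarrow(b)$, $(a)\Leftrightarrow(d)$, and $(b)\Rightarrow(c)$ as immediate and gives a short argument only for $(c)\Rightarrow(b)$; both routes are sound and of comparable length. The more substantive difference is in the ``furthermore'' claim: the paper's proof leans on Algorithm~\ref{algorithm}, observing that $\beta_1(\pi)$ is the largest letter of the valid factor at the moment of termination while $y$ is still inside it, whereas you avoid the iterative algorithm entirely and argue directly from the recursive Definition~\ref{defn:rix_lin_zeng}, using the fact that when $\beta$ is not increasing its first letter $\beta_1(\pi)$ is the largest descent of $\beta$, so the letters of $\beta$ exceeding $\beta_1(\pi)$ are forced to be an increasing suffix and hence rixed points. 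Your route keeps the lemma self-contained at the level of the defining recursion, which is a nice feature; the paper's version is a one-liner once Algorithm~\ref{algorithm} and its correctness proof are in hand. Both are valid, and your ``staircase'' observation is exactly the structural fact that makes the claim go through.
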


\begin{proof}
The equivalences (a)--(b) and (a)--(d) are immediate from Definition \ref{d-rixpt}, and (b) obviously implies (c). If every letter of $\beta$ appearing after $\beta_1(\pi)$ is a rixed point of $\pi$, then $\beta_1(\pi)$ is also part of the maximal increasing suffix consisting of letters not smaller than $\beta_1(\pi)$, so $\beta_1(\pi)$ is also a rixed point of $\pi$. Thus (c) implies (b).

Now, let $y$ be a letter of $\beta$ that is neither $\beta_1(\pi)$ nor a rixed point. Then it is easy to see that $y$ is a non-initial letter of the valid factor when Algorithm \ref{algorithm} terminates, whereas $\beta_1(\pi)$ is the initial letter of that valid factor and is the letter being considered by the algorithm at that point. Since the letter being considered at any point is the largest letter of the valid factor (or is not in the valid factor), it follows that $y<\beta_1(\pi)$.
\end{proof}
 
\section{Valley-hopping and cyclic valley-hopping}

Most of our remaining results will deal with the interplay between rixed points and valley-hopping; here we shall define the latter. Fix $\pi\in\mathfrak{S}_n$ and $x\in[n]$. We may write $\pi=w_1 w_2 x w_4 w_5$ where $w_2$ is the maximal consecutive subword immediately to the left of $x$ whose letters are all smaller than $x$, and $w_4$ is the maximal consecutive subword immediately to the right of $x$ whose letters are all smaller than $x$. Define $\varphi_x\colon\mathfrak{S}_n\rightarrow\mathfrak{S}_n$ by 
\[
\varphi_x(\pi)\coloneqq\begin{cases}
w_1 w_4 x w_2 w_5, & \text{if }x\text{ is a double ascent or double descent of }\pi,\\
\pi, & \text{if }x\text{ is a peak or valley of }\pi.
\end{cases}
\]

It is easy to see that $\varphi_x$ is an involution, and that $\varphi_x$ commutes with $\varphi_y$ for all $x,y\in[n]$. Thus, given a subset $S\subseteq[n]$, it makes sense to define $\varphi_S\colon\mathfrak{S}_n\rightarrow\mathfrak{S}_n$ by $\varphi_S\coloneqq\prod_{x\in S}\varphi_x$. The involutions $\{\varphi_S\}_{S\subseteq[n]}$ define a $\mathbb{Z}_2^n$-action on $\mathfrak{S}_n$, called the \textit{modified Foata--Strehl action} or \textit{valley-hopping}.

For example, if $\pi=834279156$ and $S=\{6,7,8\}$, then we have $\varphi_{S}(\pi)=734289615$; see Figure~\ref{f-vh}. This figure makes it apparent that, pictorially, the elements of $S$ are indeed ``hopping'' over valleys upon applying $\varphi_{S}$. Also, observe that $x\in S$ is a double ascent of $\pi$ if and only if $x$ is a double descent of $\varphi_{S}(\pi)$, and that it is a double descent of $\pi$ if and only if it is a double ascent of $\varphi_{S}(\pi)$.

\begin{figure}[t]
\begin{center}
\begin{tikzpicture}[scale=0.5] 	
\draw[step=1,lightgray,thin] (0,1) grid (10,10); 
	\tikzstyle{ridge}=[draw, line width=1, dotted, color=black] 
	\path[ridge] (0,10)--(1,8)--(2,3)--(3,4)--(4,2)--(5,7)--(6,9)--(7,1)--(8,5)--(9,6)--(10,10); 
	\tikzstyle{node0}=[circle, inner sep=2, fill=black] 
	\tikzstyle{node1}=[rectangle, inner sep=3, fill=dartmouthgreen] 
	\tikzstyle{node2}=[diamond, inner sep=2, fill=davidsonred] 
	\node[node2] at (1,8) {}; 
	\node[node0] at (2,3) {}; 
	\node[node0] at (3,4) {}; 
	\node[node0] at (4,2) {}; 
	\node[node2] at (5,7) {}; 
	\node[node0] at (6,9) {}; 
	\node[node0] at (7,1) {}; 
	\node[node0] at (8,5) {}; 
	\node[node2] at (9,6) {}; 
	\tikzstyle{hop1}=[draw, line width = 1.5, color=davidsonred,->]
	\tikzstyle{hop2}=[draw, line width = 1.5, color=davidsonred,<-] 
	\path[hop1] (1.5,8)--(5,8);
	\path[hop2] (1.6,7)--(4.5,7);
	\path[hop2] (6.8,6)--(8.5,6); 
	\tikzstyle{pi}=[above=-5] 
	\node[pi] at (0,0) {$\infty$}; 
	\node[pi, color=davidsonred] at (1,0) {8}; 
	\node[pi] at (2,0) {3}; 
	\node[pi] at (3,0) {4}; 
	\node[pi] at (4,0) {2}; 
	\node[pi, color=davidsonred] at (5,0) {7}; 
	\node[pi] at (6,0) {9}; 
	\node[pi] at (7,0) {1}; 
	\node[pi] at (8,0) {5}; 
	\node[pi, color=davidsonred] at (9,0) {6}; 
	\node[pi] at (10,0) {$\infty$}; 
 
	\path[draw,line width=1,->] (11,5)--(15,5); 
    \node at (13,5.5) {$\varphi_S$};
    
	\begin{scope}[shift={(16,0)}] 
	\draw[step=1,lightgray,thin] (0,1) grid (10,10); 
	\path[ridge] (0,10)--(1,7)--(2,3)--(3,4)--(4,2)--(5,8)--(6,9)--(7,6)--(8,1)--(9,5)--(10,10); 
	\node[node0] at (1,7) {}; 
	\node[node0] at (2,3) {}; 
	\node[node0] at (3,4) {}; 
	\node[node0] at (4,2) {}; 
	\node[node0] at (5,8) {}; 
	\node[node0] at (6,9) {}; 
	\node[node0] at (7,6) {}; 
	\node[node0] at (8,1) {}; 
	\node[node0] at (9,5) {}; 
	\node[pi] at (0,0) {$\infty$}; 
	\node[pi] at (1,0) {7}; 
	\node[pi] at (2,0) {3}; 
	\node[pi] at (3,0) {4}; 
	\node[pi] at (4,0) {2}; 
	\node[pi] at (5,0) {8}; 
	\node[pi] at (6,0) {9};
	\node[pi] at (7,0) {6}; 
	\node[pi] at (8,0) {1}; 
	\node[pi] at (9,0) {5}; 
	\node[pi] at (10,0) {$\infty$}; 
	\end{scope}
\end{tikzpicture}
\end{center}

\caption{Valley-hopping on $\pi=834279156$ with $S=\{6,7,8\}$ yields $\varphi_{S}(\pi)=734289615$.}\label{f-vh}
\end{figure}
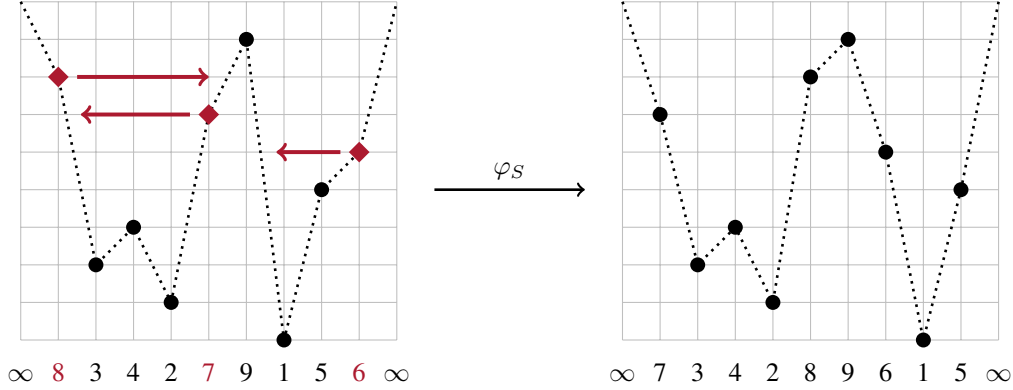

The valley-hopping action originated in work of Foata and Strehl \cite{Foata1974}, and was independently discovered by Shapiro, Woan, and Getu \cite{Shapiro1983} and by Br{\"a}nd{\'e}n \cite{Braenden2008}. A cyclic version of valley-hopping was later defined by Sun and Wang \cite{Sun2014} for derangements, and then extended to the entire symmetric group by Cooper, Jones, and the second author \cite{Cooper2020}. Below, we will extend Sun and Wang's action in a slightly different way.

As of this point, we have only needed to write permutations in one-line notation, but we shall now need both one-line notation and cycle notation. When writing a permutation in cycle notation, we shall write each cycle with its largest value first and listing the cycles in increasing order of their largest value; this convention is referred to as \textit{canonical cycle representation}.

Let $o\colon\mathfrak{S}_n\rightarrow\mathfrak{S}_n$ denote Foata's \textit{fundamental transformation}, which takes a permutation $\pi$ in canonical cycle representation and outputs the permutation $o(\pi)$ in one-line notation obtained from $\pi$ by erasing the parentheses. Given $x\in[n]$ and $S\subseteq[n]$, define $\psi_x\colon\mathfrak{S}_n\rightarrow\mathfrak{S}_n$ by $\psi_x\coloneqq o^{-1}\circ\varphi_x\circ o$ and $\psi_S\colon\mathfrak{S}_n\rightarrow\mathfrak{S}_n$ by $\psi_S\coloneqq\prod_{x\in S}\psi_x$. The $\{\psi_S\}_{S\subseteq[n]}$ induce a $\mathbb{Z}_2^n$-action on $\mathfrak{S}_n$ which we call \textit{cyclic valley-hopping}. See Figure \ref{f-cvh} for an example.

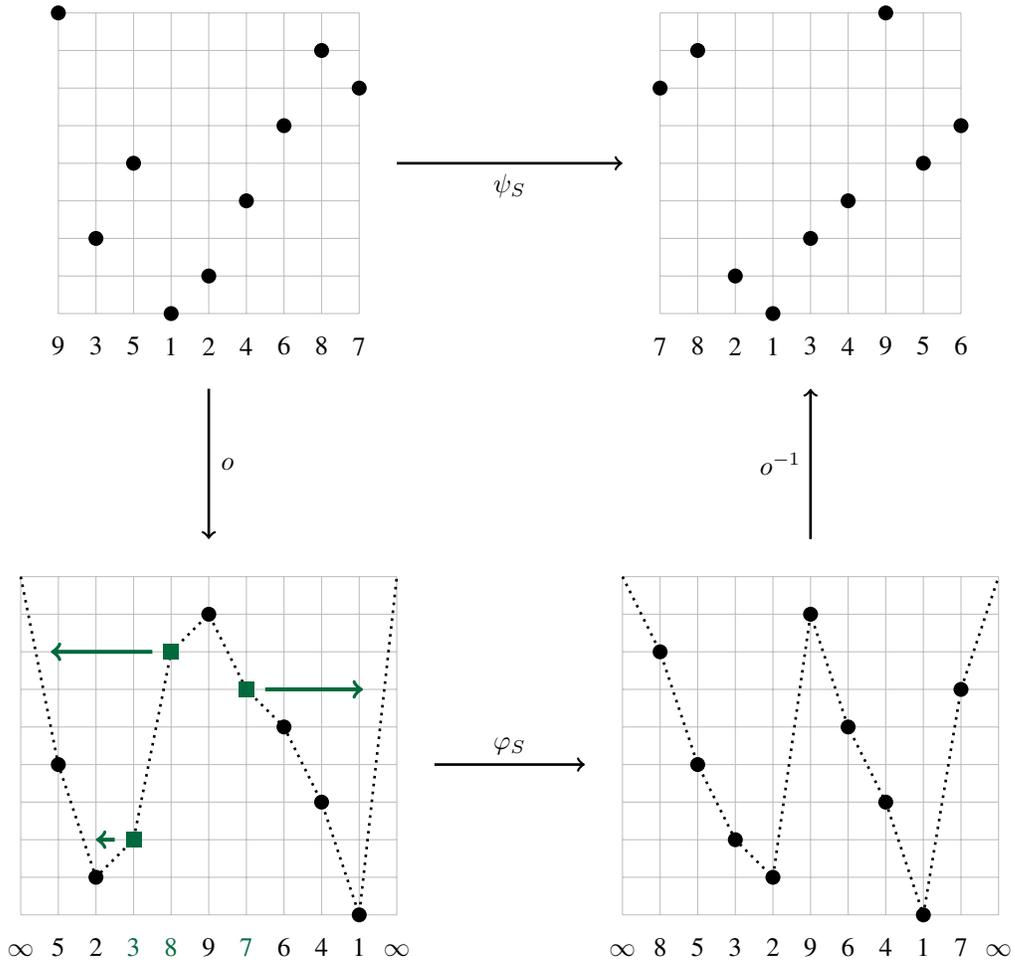
\begin{figure}[!t]
\begin{center}
\begin{tikzpicture}[scale=0.5] 	
\draw[step=1,lightgray,thin] (0,1) grid (10,10); 
	\tikzstyle{ridge}=[draw, line width=1, dotted, color=black] 
	\path[ridge] (0,10)--(1,5)--(2,2)--(3,3)--(4,8)--(5,9)--(6,7)--(7,6)--(8,4)--(9,1)--(10,10); 
	\tikzstyle{node0}=[circle, inner sep=2, fill=black] 
	\tikzstyle{node1}=[rectangle, inner sep=3, fill=dartmouthgreen] 
	\tikzstyle{node2}=[diamond, inner sep=2, fill=davidsonred] 
	\node[node0] at (1,5) {}; 
	\node[node0] at (2,2) {}; 
	\node[node1] at (3,3) {}; 
	\node[node1] at (4,8) {}; 
	\node[node0] at (5,9) {}; 
	\node[node1] at (6,7) {}; 
	\node[node0] at (7,6) {}; 
	\node[node0] at (8,4) {}; 
	\node[node0] at (9,1) {}; 
	\tikzstyle{hop1}=[draw, line width = 1.5, color=dartmouthgreen,->]
	\tikzstyle{hop2}=[draw, line width = 1.5, color=dartmouthgreen,<-] 
	\path[hop2] (2,3)--(2.5,3);
    \path[hop2] (0.8,8)--(3.5,8);
	\path[hop1] (6.5,7)--(9.1,7);
	\tikzstyle{pi}=[above=-5] 
	\node[pi] at (0,0) {$\infty$}; 
	\node[pi] at (1,0) {5}; 
	\node[pi] at (2,0) {2}; 
	\node[pi, color=dartmouthgreen] at (3,0) {3}; 
	\node[pi, color=dartmouthgreen] at (4,0) {8}; 
	\node[pi] at (5,0) {9}; 
	\node[pi, color=dartmouthgreen] at (6,0) {7}; 
	\node[pi] at (7,0) {6}; 
	\node[pi] at (8,0) {4}; 
	\node[pi] at (9,0) {1}; 
	\node[pi] at (10,0) {$\infty$}; 
	
	\path[draw,line width=1,->] (11,5)--(15,5);
	\node at (13,5.5) {$\varphi_S$};
	
	\begin{scope}[shift={(16,0)}] 
	\draw[step=1,lightgray,thin] (0,1) grid (10,10); 
	\path[ridge] (0,10)--(1,8)--(2,5)--(3,3)--(4,2)--(5,9)--(6,6)--(7,4)--(8,1)--(9,7)--(10,10); 
	\node[node0] at (1,8) {}; 
	\node[node0] at (2,5) {}; 
	\node[node0] at (3,3) {}; 
	\node[node0] at (4,2) {}; 
	\node[node0] at (5,9) {}; 
	\node[node0] at (6,6) {}; 
	\node[node0] at (7,4) {}; 
	\node[node0] at (8,1) {}; 
	\node[node0] at (9,7) {}; 
	\node[pi] at (0,0) {$\infty$}; 
	\node[pi] at (1,0) {8}; 
	\node[pi] at (2,0) {5}; 
	\node[pi] at (3,0) {3}; 
	\node[pi] at (4,0) {2}; 
	\node[pi] at (5,0) {9}; 
	\node[pi] at (6,0) {6};
	\node[pi] at (7,0) {4}; 
	\node[pi] at (8,0) {1}; 
	\node[pi] at (9,0) {7}; 
	\node[pi] at (10,0) {$\infty$}; 
	\end{scope}
	
	\path[draw,line width=1,->] (5,15)--(5,11);
	\node at (5.5,13) {$o$};
	\path[draw,line width=1,->] (21,11)--(21,15);
	\node at (20.2,13) {$o^{-1}$};
	
	\begin{scope}[shift={(0,16)}] 
	\draw[step=1,lightgray,thin] (1,1) grid (9,9); 
	\node[node0] at (1,9) {}; 
	\node[node0] at (2,3) {}; 
	\node[node0] at (3,5) {}; 
	\node[node0] at (4,1) {}; 
	\node[node0] at (5,2) {}; 
	\node[node0] at (6,4) {}; 
	\node[node0] at (7,6) {}; 
	\node[node0] at (8,8) {}; 
	\node[node0] at (9,7) {}; 
	\node[pi] at (1,0) {9}; 
	\node[pi] at (2,0) {3}; 
	\node[pi] at (3,0) {5}; 
	\node[pi] at (4,0) {1}; 
	\node[pi] at (5,0) {2}; 
	\node[pi] at (6,0) {4};
	\node[pi] at (7,0) {6}; 
	\node[pi] at (8,0) {8}; 
	\node[pi] at (9,0) {7}; 
	\end{scope}
	
	\path[draw,line width=1,->] (10,21)--(16,21);
	\node at (13,20.4) {$\psi_S$};
	
	\begin{scope}[shift={(16,16)}] 
	\draw[step=1,lightgray,thin] (1,1) grid (9,9); 
	\node[node0] at (1,7) {}; 
	\node[node0] at (2,8) {}; 
	\node[node0] at (3,2) {}; 
	\node[node0] at (4,1) {}; 
	\node[node0] at (5,3) {}; 
	\node[node0] at (6,4) {}; 
	\node[node0] at (7,9) {}; 
	\node[node0] at (8,5) {}; 
	\node[node0] at (9,6) {}; 
	\node[pi] at (1,0) {7}; 
	\node[pi] at (2,0) {8}; 
	\node[pi] at (3,0) {2}; 
	\node[pi] at (4,0) {1}; 
	\node[pi] at (5,0) {3}; 
	\node[pi] at (6,0) {4};
	\node[pi] at (7,0) {9}; 
	\node[pi] at (8,0) {5}; 
	\node[pi] at (9,0) {6}; 
	\end{scope}
\end{tikzpicture}
\end{center}

\caption{Cyclic valley-hopping on $\pi=(523)(8)(97641)=935124687$ with $S=\{3,7,8\}$
yields $\psi_{S}(\pi)=(8532)(96417)=782134956$.} \label{f-cvh}
\end{figure}

We will also consider ``restricted'' versions of valley-hopping and cyclic valley-hopping. Define \textit{restricted valley-hopping} to be the $\mathbb{Z}_2^n$-action on $\mathfrak{S}_n$ induced by the involutions $\hat{\varphi}_S\coloneqq\prod_{x\in S}\hat{\varphi}_x$ where
\[
\hat{\varphi}_x(\pi)\coloneqq\begin{cases}
\pi, & \text{if }x\in\Rix(\pi)\text{ or if }x=\beta_1(\pi),\\
\varphi_x(\pi), & \text{otherwise}.
\end{cases}
\]
Moreover, define \textit{restricted cyclic valley-hopping} to be the $\mathbb{Z}_2^n$-action on $\mathfrak{S}_n$ induced by the involutions $\hat{\psi}_S\coloneqq\prod_{x\in S}\hat{\psi}_x$ where
\[
\hat{\psi}_x(\pi)\coloneqq\begin{cases}
\pi, & \text{if }x\in\Fix(\pi)\text{ or if }x\text{ is the first letter of }o(\pi),\\
\psi_x(\pi), & \text{otherwise}.
\end{cases}
\]
Restricted valley-hopping was first defined by Lin and Zeng \cite[Section 4]{Lin2015}, and restricted cyclic valley-hopping is precisely the aforementioned extension of Sun and Wang's action due to Cooper, Jones, and the second author.

\section{\texorpdfstring{$\rix$}{rix} is homomesic under valley-hopping}

Having defined the valley-hopping action, our next goal is to prove the following.

\begin{thm} \label{t-rixmesic}
The $\rix$ statistic is 1-mesic under valley-hopping.
\end{thm}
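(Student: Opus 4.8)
The plan is to prove the stronger statement, by strong induction on $n$, that
\[
\sum_{\sigma\in\mathcal O}\rix(\sigma)=\lvert\mathcal O\rvert
\]
for every valley-hopping orbit $\mathcal O$ inside $\mathfrak S_n$ — and, more generally, inside any set of words with distinct letters that is order-isomorphic to such an orbit, since both $\rix$ and each $\varphi_x$ depend only on the relative order of the letters. (I would standardize freely, using $\st$, to stay in symmetric groups.) Once this is known, dividing by $\lvert\mathcal O\rvert$ gives that $\rix$ has average $1$ on every orbit, which is exactly $1$-mesy. Notably this argument uses only the defining recursion \eqref{e-rix} for $\rix$, not the machinery of Section 3. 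The base case $n=1$ is immediate: $\mathcal O=\{1\}$ and $\rix(1)=1$ by \eqref{e-rix}. For the inductive step I would condition on the position of the largest letter $n$ in $\pi$.

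\emph{Case 1: $n$ lies in an interior position,} say $\pi=\pi'\,n\,\pi''$ with $\pi',\pi''$ nonempty. The key claim is that $\varphi_n$ fixes $\pi$ (as $n$ is a peak), while for $x<n$ the map $\varphi_x$ rearranges only the factor — $\pi'$ or $\pi''$ — that contains $x$, leaving $n$ and the other factor in place; iterating the generators then yields the product decomposition $\mathcal O(\pi)=\{\tau'\,n\,\tau'':\tau'\in\mathcal O(\pi'),\ \tau''\in\mathcal O(\pi'')\}$, so in particular $\lvert\mathcal O(\pi)\rvert=\lvert\mathcal O(\pi')\rvert\cdot\lvert\mathcal O(\pi'')\rvert$. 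Granting this, $n$ stays interior throughout the orbit, so the third branch of \eqref{e-rix} gives $\rix(\tau'\,n\,\tau'')=\rix(\tau'')$, and hence
\[
\sum_{\sigma\in\mathcal O(\pi)}\rix(\sigma)=\lvert\mathcal O(\pi')\rvert\cdot\!\!\sum_{\tau''\in\mathcal O(\pi'')}\!\!\rix(\tau'')=\lvert\mathcal O(\pi')\rvert\cdot\lvert\mathcal O(\pi'')\rvert=\lvert\mathcal O(\pi)\rvert,
\]
the middle equality being the inductive hypothesis applied to the shorter word $\pi''$. \emph{Case 2: $n$ is the first or last letter,} say $\pi=\tau n$ or $\pi=n\tau$ with $\tau$ of length $n-1\ge 1$. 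A direct computation shows $\varphi_n$ swaps $\tau n\leftrightarrow n\tau$, while for $x<n$ the map $\varphi_x$ again just valley-hops within $\tau$; hence $\mathcal O(\pi)=\{\tau'n:\tau'\in\mathcal O(\tau)\}\sqcup\{n\tau':\tau'\in\mathcal O(\tau)\}$ (a disjoint union, since $\tau$ is nonempty), so $\lvert\mathcal O(\pi)\rvert=2\lvert\mathcal O(\tau)\rvert$. The first two branches of \eqref{e-rix} give $\rix(\tau' n)=1+\rix(\tau')$ and $\rix(n\tau')=0$ (the latter because $\tau'$ is nonempty), so
\[
\sum_{\sigma\in\mathcal O(\pi)}\rix(\sigma)=\sum_{\tau'\in\mathcal O(\tau)}\bigl(1+\rix(\tau')\bigr)=\lvert\mathcal O(\tau)\rvert+\lvert\mathcal O(\tau)\rvert=\lvert\mathcal O(\pi)\rvert,
\]
again by the inductive hypothesis on $\tau$. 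This closes the induction.

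The routine-but-essential step, and the one I expect to require the most care, is the orbit-decomposition claim in each case — informally, that \emph{valley-hopping commutes with adjoining the maximum letter}. The reason it holds is that the local type (peak, valley, double ascent, or double descent) of any letter of $\pi'$ or $\pi''$ is the same whether computed inside $\pi$ or inside that factor alone: the only neighbor that differs is $n$, which exceeds every other letter and so plays exactly the role of the virtual $\infty$ at the ends of a word; and the block of letters smaller than $x<n$ that $\varphi_x$ transplants can never cross $n$. From this one deduces that each generator $\varphi_x$ either fixes $\pi$ (namely $x=n$) or acts independently on the two sides of $n$, which gives the product (respectively two-to-one) orbit structure used above. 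Everything else reduces to a direct reading of the recursion \eqref{e-rix}.
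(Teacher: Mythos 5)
Your proof is correct, and it takes a genuinely different route from the paper's. The paper proves the theorem by constructing, for each orbit $\Pi$, an explicit bijection $\phi\colon R_\Pi\to\Pi$ given by $\phi(\pi,x)=\varphi_x(\pi)$, where $R_\Pi$ is the set of pairs $(\pi,x)$ with $\pi\in\Pi$ and $x\in\Rix(\pi)$; the heart of the argument is Lemma \ref{l-beta1}, which identifies the hopped letter $x$ as $\beta_1(\phi(\pi,x))$ and thereby shows $\phi$ is invertible. Establishing that lemma requires the full machinery of Section 3 (Algorithm \ref{algorithm}, the rix-factorization, Lemmas \ref{l-valdasc}--\ref{l-xbet} on leading descents and rixed points). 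Your argument instead proceeds by strong induction on $n$, conditioning on whether the largest letter $n$ is interior, initial, or terminal, and it uses nothing beyond the defining recursion \eqref{e-rix} and the elementary observation that valley-hopping factors across the position of $n$ (since $n$ is a peak or a boundary letter that plays the role of $\infty$, and transported blocks consist of letters smaller than $x<n$ and so can never cross it). This buys a self-contained, more elementary proof that never touches the rix-factorization or the set-valued statistic $\Rix$; I have checked the orbit decompositions and the bookkeeping in both cases, and they hold, including the base case $\rix(1)=1$. What your approach does not yield, and what the paper's bijective proof provides, is the finer structural statement of Lemma \ref{l-beta1} — a canonical matching between rixed points across an orbit and permutations in that orbit — which the paper then reuses in the proof of Lemma \ref{l-Phiclosed} and hence of Theorem \ref{t-main}. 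So the paper's heavier setup is not wasted motion: it is doing double duty for the later sections, whereas your proof would need to be supplemented by something like Lemma \ref{l-beta1} anyway if one wants the results of Section 6.
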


\noindent A statistic is \textit{$k$-mesic} under an action if its average value over each orbit is equal to $k$. In other words, we claim that the permutations in each valley-hopping orbit have 1 rixed point on average.

Given an orbit $\Pi$ of the valley-hopping action, define the set $R_{\Pi}$ by
\[
R_{\Pi}\coloneqq\{\,(\pi,x):\pi\in\Pi\text{ and }x\in\Rix(\pi)\,\}
\]
and the map $\phi\colon R_{\Pi}\rightarrow\Pi$ by taking $\phi(\pi,x) \coloneqq \varphi_x (\pi)$---i.e., the permutation in $\Pi$ obtained by letting $x$ hop in $\pi$.

\begin{lem} 
\label{l-beta1}
For any $(\pi,x)\in R_{\Pi}$, we have $x=\beta_1(\phi(\pi,x))$.
\end{lem}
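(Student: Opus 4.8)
The plan is to analyze what happens to the rix-factorization of $\pi$ when the letter $x \in \Rix(\pi)$ hops via $\varphi_x$. By Lemma \ref{l-valdasc}, a rixed point $x$ is either $\beta_1(\pi)$ (in which case it is a valley) or a double ascent of $\pi$; but valleys are fixed by valley-hopping, so in the interesting case $\phi(\pi,x) \neq \pi$ we must have that $x$ is a double ascent of $\pi$ and $x > \beta_1(\pi)$. I would split into the two cases $x = \beta_1(\pi)$ and $x \neq \beta_1(\pi)$.

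In the case $x = \beta_1(\pi)$, the letter $x$ is a valley, so $\varphi_x(\pi) = \pi$ and $\phi(\pi,x) = \pi$; the claim $x = \beta_1(\phi(\pi,x)) = \beta_1(\pi)$ is then immediate. For the main case $x \neq \beta_1(\pi)$, so $x$ is a double ascent with $x > \beta_1(\pi)$, I would write $\pi = w_1 w_2\, x\, w_4 w_5$ as in the definition of $\varphi_x$, where $w_2$ (resp.\ $w_4$) is the maximal run of letters smaller than $x$ immediately left (resp.\ right) of $x$, so that $\varphi_x(\pi) = w_1 w_4\, x\, w_2 w_5$. Since $x$ is a double ascent of $\pi$, the letter immediately to its right (the first letter of $w_4$) is larger than $x$ only if $w_4$ is empty --- but $x < \pi_{i+1}$ forces the first letter of $w_4$, if it exists, to exceed $x$, contradicting that $w_4$'s letters are all smaller than $x$; hence $w_4 = \varnothing$. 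Thus $\varphi_x(\pi) = w_1\, x\, w_2 w_5$: the hop moves $x$ to the immediate left of $w_2$. Now I would invoke Theorem \ref{t-rixptchar}: since $x$ is a rixed point of $\pi$, none of $x+1,\dots,n$ appears immediately right of the maximal descending ridge of $\pi$ or as a peak to the right of $x$. In $\phi(\pi,x)$, the letter $x$ now directly follows $w_1$, and the letter immediately before $x$ (the last letter of $w_1$) is larger than $x$ (by maximality of $w_2$, unless $w_1 w_2 = \varnothing$, i.e.\ $\pi$ begins with $x$); either way $x$ becomes a valley of $\phi(\pi,x)$, being immediately followed by $w_2$ whose first letter is smaller than $x$, then eventually by $w_5$.

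The key step is then to verify that in $\phi(\pi,x)$ the rix-factorization's $\beta$-factor begins exactly at $x$. I would run Algorithm \ref{algorithm} (or the recursive procedure of Definition \ref{defn:rix_lin_zeng}) on $\phi(\pi,x) = w_1\, x\, w_2 w_5$. The large letters examined first are those exceeding $x$; since none of them is a peak to the right of $x$ in $\pi$, and the hop only rearranged letters $\le x$, none of them is a peak to the right of $x$ in $\phi(\pi,x)$ either, so the left boundary of the valid factor never crosses past $x$. Moreover no letter $> x$ can trigger termination before $x$ is reached: if it did, by the ridge argument in the proof of Theorem \ref{t-rixptchar} it would immediately follow the maximal descending ridge of $\phi(\pi,x)$, and I would argue (comparing ridges of $\pi$ and $\phi(\pi,x)$, which agree on all leading descents since those are letters $> x$ sitting in $w_1$ and $w_5$ untouched by the hop, plus possibly $x$ itself if $x$ were a leading descent of $\phi(\pi,x)$ --- it is not, being a valley) that this contradicts $x$ being a rixed point of $\pi$. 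Hence the algorithm reaches $x$ with $x$ still in the valid factor; since $x$ is a valley of $\phi(\pi,x)$ and the largest remaining letter, it must be the first letter of the valid factor, so the algorithm terminates and $x = \beta_1(\phi(\pi,x))$.

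The main obstacle I anticipate is the bookkeeping comparing the maximal descending ridge of $\pi$ with that of $\phi(\pi,x)$: I need to be careful that the hop of $x$ does not create a new pair of adjacent leading descents (or destroy the relevant one) that would shift where $\beta$ begins. The saving grace is that $\varphi_x$ only permutes letters that are $\le x$ among themselves, keeping every letter $> x$ in place, and that the only letter whose double-ascent/descent/valley/peak status changes is $x$ itself (which turns from a double ascent into a valley) together with the boundary letters of $w_1$ and $w_5$ adjacent to the moved block --- none of which can be a leading descent larger than something relevant, precisely because all of $x+1, \dots, n$ behave as constrained by Theorem \ref{t-rixptchar}(b). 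Making this ridge comparison rigorous, rather than hand-wavy, is where the real work lies; everything else is direct application of the algorithm and the earlier lemmas.
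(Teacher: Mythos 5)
Your approach is genuinely different from the paper's: the paper shows that $x$ is a \emph{leading descent} of $\phi(\pi,x)$ and then uses Lemma \ref{l-leaddes} together with a contradiction argument about the rix-factorization of $\phi(\pi,x)$, whereas you propose to run Algorithm \ref{algorithm} directly on $\phi(\pi,x)$ and compare its execution to the one on $\pi$. The paper's route avoids tracking the algorithm step by step; yours is more computational but can work.

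However, there are two problems as written. First, a concrete error: you claim that $x$ ``becomes a valley of $\phi(\pi,x)$'', but your own description (preceded by a larger letter, followed by a smaller one) is a \emph{double descent}, not a valley --- indeed, valley-hopping always turns a double ascent into a double descent, as the paper's proof states explicitly. This error infects your closing justification (``since $x$ is a valley\dots it must be the first letter of the valid factor''); the correct route is to observe that a double descent is a descent, and by Lemma \ref{lem:ascents_right_of_valid_factor} the right boundary of the valid factor is always an ascent, so once $x$ is in the valid factor and is not a peak, it can only be the first letter. Second, you explicitly leave the crucial step --- that no letter $z>x$ triggers termination before $x$ is reached --- as ``hand-wavy.'' That step is the crux of your argument and cannot be deferred. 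The clean way to nail it down in your framework is not the ridge comparison you sketch but a direct induction on the valid factor: all letters $>x$ occupy the same positions in $\pi$ and $\phi(\pi,x)$ (since $\varphi_x$ only permutes $x$ and letters smaller than $x$), and one checks that every such letter has the same peak/valley/double-ascent/double-descent type in both permutations (the only borderline letters are the last letter of $w_1$ and the first letter of $w_5$, and both keep their type). Consequently the pair $(l,r)$ evolves identically on $\pi$ and on $\phi(\pi,x)$ while processing letters $>x$; since the algorithm on $\pi$ reaches $x$ without terminating (because $x$ is a rixed point, hence $x\geq\beta_1(\pi)$, and here $x>\beta_1(\pi)$), so does the algorithm on $\phi(\pi,x)$, and one then concludes using the corrected double-descent observation. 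Without carrying out this comparison, your proof has a genuine gap.
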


\begin{proof}
Fix $(\pi,x)\in R_{\Pi}$ and let 
\[
\pi=\alpha_1\cdots\alpha_k\beta\qquad\text{and}\qquad\phi(\pi,x)=\alpha_1^{\prime}\cdots\alpha_m^{\prime}\beta^{\prime}
\]
be the rix-factorizations of $\pi$ and $\phi(\pi,x)$, respectively. Suppose that $x=\beta_1(\pi)$. Then $x$ is a valley of $\pi$ by Lemma \ref{l-valdasc}, so $\pi=\phi(\pi,x)$ and thus $x=\beta_1(\pi)=\beta_1(\phi(\pi,x))$.
Hence, let us assume for the rest of this proof that $x\neq\beta_1(\pi)$, which by Lemma \ref{l-valdasc} means that $x$ is a double ascent of $\pi$.

Since $x$ is a double ascent of $\pi$, we know that $x$ is a double descent of $\phi(\pi,x)$. In addition, we know that $x$ hops over $\beta_1(\pi)$ because $x>\beta_1(\pi)$---that is, $x$ appears before $\beta_1(\pi)$ in $\phi(\pi,x)$. In fact, we claim that $x$ is a leading descent of $\phi(\pi,x)$. To see this, first recall from Definition \ref{defn:rix_lin_zeng} that either $\beta$ is increasing or $\beta_1(\pi)$ is the largest descent of $\beta$; in either case, there cannot be a descent of $\phi(\pi,x)$ larger than $x$ to the right of $\beta_1(\pi)$. There also cannot be a descent of $\phi(\pi,x)$ larger than $x$ located between $x$ and $\beta_1(\pi)$, as $x$ would not have been able to hop over that descent. Therefore, $x$ is a leading descent of $\phi(\pi,x)$, which by Lemma \ref{l-leaddes} implies that $x$ is either the last letter of some $\alpha_i^{\prime}$ or belongs to $\beta^{\prime}$. 

Assume by way of contradiction that $x$ is the last letter of $\alpha_i^{\prime}$. If $x$ is the first letter of $\phi(\pi,x)$, then $x$ cannot be the last letter of $\alpha_i^{\prime}$ because $\alpha_i^{\prime}$ has length at least 2 by Definition \ref{defn:rix_lin_zeng}; but in this case, we would have $\beta^{\prime}=\phi(\pi,x)$ which gives the desired conclusion. Otherwise, let $y$ be the letter immediately preceding $x$ in $\phi(\pi,x)$. We know that $y>x$ by the definition of valley-hopping; after all, if $y<x$, then $x$ would have hopped over it. In other words, $y$ is a descent of $\phi(\pi,x)$. In fact, $y$ is also a descent of $\pi$; the letter $z$ immediately following $y$ in $\pi$ appears after $x$ in $\phi(\pi,x)$, and so $x>z$ (and thus $y>z$) because $x$ hopped over it. The following illustrates the relative placements of $y$, $z$, $\beta_1(\pi)$, $x$ before and after valley-hopping:
\[
\pi=\cdots yz\cdots\beta_1(\pi)\cdots x\cdots\qquad\phi(x,\pi)=\cdots yxz\cdots\beta_1(\pi)\cdots.
\]
Because $x$ is a leading descent of $\phi(\pi,x)$ and $y>x$, it follows that $y$ is a leading descent of both $\pi$ and $\phi(x,\pi)$. We know that $y$ cannot be in $\beta^{\prime}$ because $x$ is not in $\beta^{\prime}$, so by Lemma \ref{l-leaddes}, it must be true that $y$ is the last letter of $\alpha_{i-1}^{\prime}$, and yet this is impossible because it would mean that $\alpha_i^{\prime}$ has length 1. Therefore, our assumption that $x$ is the last letter of some $\alpha_i^{\prime}$ is false, and so $x$ belongs to $\beta^{\prime}$ and either $y$ is the last letter of $\alpha_m^{\prime}$ or also belongs to $\beta^{\prime}$. If we can show that $y$ is the last letter of $\alpha_m^{\prime}$, then $x$ would be the first letter of $\beta^{\prime}$ as desired.

Up to this point, we've only used the fact that $y$ is a leading descent of $\phi(x,\pi)$, but recall that $y$ is also a leading descent of $\pi$. Since $y$ is not in $\beta$, by appealing to Lemma \ref{l-leaddes} again, it follows that $y$ is the last letter of some $\alpha_j$. Letting $x$ hop does not change the prefix of the permutation $\pi$ up to and including $y$, nor does it change whether $y$ is a leading descent, so the first $j$ rix-factors $\alpha_1,\dots,\alpha_j$ of $\pi$ are precisely the rix-factors $\alpha_1^{\prime},\dots,\alpha_m^{\prime}$ of $\phi(x,\pi)$. Hence, $y$ is the last letter of $\alpha_m^{\prime}$ and we are done.
\end{proof}

\begin{prop} \label{p-bij}
The map $\phi\colon R_{\Pi}\rightarrow\Pi$ is a bijection.
\end{prop}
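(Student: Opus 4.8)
The plan is to show that $\phi$ is both injective and surjective, with essentially all the work going into surjectivity. Injectivity is immediate from Lemma \ref{l-beta1}: if $\phi(\pi,x)=\phi(\pi',x')=\sigma$, then $x=\beta_1(\sigma)=x'$ by that lemma, and hence $\pi=\varphi_x(\sigma)=\varphi_{x'}(\sigma)=\pi'$ because each $\varphi_x$ is an involution. For surjectivity, fix $\sigma\in\Pi$, set $x\coloneqq\beta_1(\sigma)$, and let $\pi\coloneqq\varphi_x(\sigma)$, which again lies in $\Pi$. If we can show that $x\in\Rix(\pi)$, then $(\pi,x)\in R_\Pi$ and $\phi(\pi,x)=\varphi_x(\varphi_x(\sigma))=\sigma$, so $\sigma$ is in the image of $\phi$. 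Thus the proposition reduces to the following claim: \emph{for every $\sigma\in\mathfrak{S}_n$, the letter $\beta_1(\sigma)$ is a rixed point of $\varphi_{\beta_1(\sigma)}(\sigma)$.}

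I would prove the claim by splitting into two cases according to Lemma \ref{l-xbet}. If the rix-factor $\beta$ of $\sigma$ is increasing, then $x=\beta_1(\sigma)\in\Rix(\sigma)$ by Lemma \ref{l-xbet}(d), and $x$ is a valley of $\sigma$ by Lemma \ref{l-valdasc}, so $\varphi_x(\sigma)=\sigma$ and there is nothing to prove. Suppose instead that $\beta$ is not increasing, and write the rix-factorization of $\sigma$ as $\sigma=\alpha_1\cdots\alpha_k\mid xw_4w_5$, where $w_4$ is the maximal run of letters smaller than $x$ immediately following $x$ inside $\beta$ and $w_5$ is the remainder of $\beta$. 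Reading off the algorithm in Definition \ref{defn:rix_lin_zeng} yields three facts: (i) $w_4\neq\varnothing$, and there is no letter smaller than $x$ immediately to the left of $x$ in $\sigma$ (that neighbour being the last letter of $\alpha_k$, which exceeds $x$, when $k\ge1$, and absent when $k=0$), so $x$ is a double descent of $\sigma$; (ii) $x$ is the largest descent of $\beta$, which forces $w_5$ to be increasing with first letter larger than $x$; and (iii) for every $j\le k$, the last letter of $\alpha_j$ is a descent of $\sigma$ larger than $x$.

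By (i), applying $\varphi_x$ merely slides $x$ rightward past $w_4$, so $\pi=\varphi_x(\sigma)=\alpha_1\cdots\alpha_k\,w_4\,x\,w_5$ and $x$ has become a double ascent of $\pi$. By (ii) the block $xw_5$ is increasing, and the letter preceding $x$ in $\pi$ (the last letter of $w_4$) is smaller than $x$, so $x$ lies in the maximal increasing suffix of $\pi$; it remains only to check that $x\ge\beta_1(\pi)$. Running the rix-factorization on $\pi$ and using (iii), the successive largest descents encountered coincide with those for $\sigma$ (each exceeds every descent contained in $w_4$), so the algorithm strips off $\alpha_1,\dots,\alpha_k$ exactly as before and is left with the word $w_4\,x\,w_5$; since every descent of $w_4\,x\,w_5$ lies inside $w_4$ and is therefore smaller than $x$, the first letter $\beta_1(\pi)$ of the resulting final rix-factor satisfies $\beta_1(\pi)\le x$. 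Hence $x\in\Rix(\pi)$ by Definition \ref{d-rixpt}, which establishes the claim and completes the proof.

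The main obstacle is the non-increasing case, where one must control how the single block transposition performed by $\varphi_x$ interacts with the recursive peeling of rix-factors — specifically, that no $\alpha$-factor changes and that $\beta_1$ does not climb above $x$. An alternative that avoids some of this bookkeeping is to invoke Theorem \ref{t-rixptchar}: $x$ is a double ascent of $\pi$; everything to the right of $x$ in $\pi$ is the increasing block $w_5$, so no letter larger than $x$ is a peak to the right of $x$; and by the remark following Theorem \ref{t-rixptchar}, the letter immediately after the maximal descending ridge of $\pi$ is $\beta_1(\pi)\le x$, so none of $x+1,\dots,n$ occupies that position — whence $x\in\Rix(\pi)$ by part (b) of that theorem.
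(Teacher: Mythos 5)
Your proof is correct. The injectivity half coincides exactly with the paper's argument: by Lemma \ref{l-beta1}, one recovers $x=\beta_1(\sigma)$ and then $\pi=\varphi_x(\sigma)$ from $\sigma=\phi(\pi,x)$. The paper's proof, however, stops at this recovery recipe and does not verify that, for an arbitrary $\sigma\in\Pi$, the resulting pair actually lands in $R_\Pi$---i.e., that $\beta_1(\sigma)\in\Rix\bigl(\varphi_{\beta_1(\sigma)}(\sigma)\bigr)$; this is precisely the surjectivity claim you isolate and prove. Your dichotomy on whether the rix-factor $\beta$ is increasing matches the two termination cases of Definition \ref{defn:rix_lin_zeng}, and your handling of the non-increasing case is sound: writing $\beta=x\,w_4\,w_5$, with $w_4$ the block of letters below $x$ and $w_5$ the increasing tail of rixed points (this is the content of Lemma \ref{l-xbet}), the map $\varphi_x$ slides $x$ past $w_4$ to give $\pi=\alpha_1\cdots\alpha_k\,w_4\,x\,w_5$; the $\alpha$-factors are stripped off unchanged when computing the rix-factorization of $\pi$ because their terminal letters are descents exceeding $x$ and hence exceeding everything in $w_4\,x\,w_5$, and every descent of $w_4\,x\,w_5$ lies in $w_4$ and is therefore below $x$, so $\beta_1(\pi)\le x$ and $x\in\Rix(\pi)$. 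In short, your proof takes the same underlying route (construct the inverse map via Lemma \ref{l-beta1}) but supplies the well-definedness/surjectivity verification that the paper's two-sentence proof leaves implicit.
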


\begin{proof}
In light of Lemma \ref{l-beta1}, we can recover $x$ from $\phi(\pi,x)$ by taking $x=\beta_1(\phi(\pi,x))$, and we can recover $\pi$ from $\phi(\pi,x)$ and $x$ by letting $x$ hop in $\phi(\pi,x)$.
\end{proof}

Theorem \ref{t-rixmesic} is an immediate corollary of Proposition \ref{p-bij}. After all, the fact that $\phi$ is a bijection tells us that the total number of rixed points among permutations in $\Pi$ is equal to the number of permutations in $\Pi$. In other words, the average value of $\rix$ over any valley-hopping orbit is 1.

\section{\texorpdfstring{$\Phi$}{Phi} sends valley-hopping orbits to cyclic valley-hopping orbits}
As mentioned in the introduction, Lin and Zeng \cite{Lin2015} define a bijection $\Phi\colon\mathfrak{S}_n\rightarrow\mathfrak{S}_n$ satisfying $\des(\pi)=\exc(\Phi(\pi))$ and $\Rix(\pi)=\Fix(\Phi(\pi))$. The remainder of our paper will be devoted to proving the following theorem.

\begin{thm} 
\label{t-main}
Let $\pi$ be a permutation, $\Pi$ the valley-hopping orbit containing $\pi$, $\hat{\Pi}$ the restricted valley-hopping orbit containing $\pi$, $\Pi^{\prime}$ the cyclic valley-hopping orbit containing $\Phi(\pi)$, and $\hat{\Pi}^{\prime}$ the restricted cyclic valley-hopping orbit containing $\Phi(\pi)$. Then:
\begin{enumerate}
\item[\normalfont(a)]  $\Phi(\Pi)=\Pi^{\prime}$
\item[\normalfont(b)]  $\Phi(\hat{\Pi})=\hat{\Pi}^{\prime}$
\end{enumerate}
\end{thm}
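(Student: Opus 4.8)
The strategy exploits the relation $\psi_x = o^{-1}\circ\varphi_x\circ o$, which shows that the (restricted) cyclic valley-hopping orbit of $\Phi(\pi)$ is precisely the $o^{-1}$-image of the (restricted) valley-hopping orbit of $o(\Phi(\pi))$. Thus it is natural to study the composite $o\circ\Phi$, and the theorem reduces to showing that $o\circ\Phi$ carries valley-hopping orbits into valley-hopping orbits; since such an orbit is generated by the involutions $\varphi_x$, it suffices to show that $\Phi(\varphi_x(\pi))$ and $\Phi(\pi)$ always lie in a common cyclic valley-hopping orbit. To upgrade such an inclusion of orbits to an equality, one may either run the symmetric argument with $\Phi^{-1}$ or invoke a count: the number of valley-hopping orbits of $\mathfrak{S}_n$ equals the number of permutations of $[n]$ with no double descent, which via the bijection $o$ also equals the number of cyclic valley-hopping orbits.

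A key preliminary is a lemma identifying the two ``roots'': \emph{for every permutation $\pi$, the letter $\beta_1(\pi)$ equals the first letter of $o(\Phi(\pi))$.} Granting this together with Lin and Zeng's identity $\Rix(\pi)=\Fix(\Phi(\pi))$, the frozen conditions of the two restricted actions match perfectly under $\Phi$: ``$x\in\Rix(\pi)$ or $x=\beta_1(\pi)$'' holds if and only if ``$x\in\Fix(\Phi(\pi))$ or $x$ is the first letter of $o(\Phi(\pi))$''. This lemma should follow by unwinding the definition of $\Phi$; the remark after Theorem \ref{t-rixptchar}, which pins down $\beta_1(\pi)$ via the maximal descending ridge, should keep the bookkeeping tractable.

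The crux---and the step I expect to be the main obstacle---is the \emph{restricted equivariance} $\Phi\circ\hat\varphi_x = \hat\psi_x\circ\Phi$ for every $x\in[n]$, which already delivers part (b): iterating over $x\in S$ gives $\Phi(\hat\varphi_S(\pi)) = \hat\psi_S(\Phi(\pi))$, so $\Phi(\hat{\Pi}) = \hat{\Pi}^{\prime}$. On inputs where both restricted involutions freeze, this is exactly the matching of frozen conditions above. On the remaining inputs---where $x\notin\Rix(\pi)$, $x\ne\beta_1(\pi)$, and $x$ is a double ascent or double descent of $\pi$---one must prove the sharper identity $o(\Phi(\varphi_x(\pi))) = \varphi_x(o(\Phi(\pi)))$, i.e.\ that $o\circ\Phi$ commutes with $\varphi_x$ there. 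This requires opening up the definitions of $\Phi$ (built from the rix-factorization) and of $o$ (erasing parentheses from the canonical cycle representation) and tracking how letting $x$ hop in $\pi$ relocates the two maximal runs of smaller letters flanking $x$, then checking that after applying $o$ this is the same hop performed on $o(\Phi(\pi))$. The restriction is genuinely essential: $o\circ\Phi$ does \emph{not} commute with $\varphi_x$ on the frozen inputs---equivalently, $\Phi$ is \emph{not} equivariant for the unrestricted actions---so the identities $\des\leftrightarrow\exc$ and $\Rix\leftrightarrow\Fix$ cannot suffice on their own and the structural definitions must be used directly.

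Finally, to deduce part (a): the full valley-hopping orbit of $\pi$ is generated from $\hat{\Pi}$ together with the ``extra'' moves $\varphi_x(\pi^{\prime})$, where $\pi^{\prime}$ ranges over the orbit and $x$ is a movable letter with $x\in\Rix(\pi^{\prime})$ or $x=\beta_1(\pi^{\prime})$; by Lemma \ref{l-valdasc} such a rixed point $x$ is a double ascent, and if $x=\beta_1(\pi^{\prime})$ is movable then it is a double descent. Symmetrically, the full cyclic valley-hopping orbit of $\Phi(\pi)$ is generated from $\hat{\Pi}^{\prime}$ together with extra moves $\psi_x$ at a fixed point, or at the first letter of $o(\,\cdot\,)$ when that letter is a double descent. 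Running the same hop-versus-$\Phi$-and-$o$ analysis---now permitting $o\circ\Phi$ and $\varphi_x$ to disagree, but only within a single valley-hopping orbit of $o(\Phi(\,\cdot\,))$---one shows that each such extra move sends $\pi^{\prime}$ to a permutation whose $\Phi$-image lies in the same full cyclic valley-hopping orbit as $\psi_x(\Phi(\pi^{\prime}))$, and hence as $\Phi(\pi^{\prime})$. Combined with (b), this shows $\Phi$ maps the full valley-hopping orbit of $\pi$ into a single full cyclic valley-hopping orbit; the counting observation from the first paragraph then upgrades this to $\Phi(\Pi)=\Pi^{\prime}$. (The bijection $R_{\Pi}\to\Pi$ of Proposition \ref{p-bij}, together with its expected $\Fix$-analogue for cyclic valley-hopping, is what organizes these extra moves, and it also yields the promised homomesy of $\fix$ under cyclic valley-hopping.)
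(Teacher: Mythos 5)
Your proposal is correct and follows essentially the same architecture as the paper's proof, with a couple of minor (and arguably slightly cleaner) variations in how the final upgrade from inclusion to equality is handled. The paper's own route is: (i) establish the key equivariance-type lemma (their Lemma~\ref{l-Phiclosed}), which says $\Phi(\varphi_x(\pi))=\psi_x(\Phi(\pi))$ when $x$ is neither $\beta_1(\pi)$ nor a rixed point, and $\Phi(\varphi_x(\pi))=\psi_S(\Phi(\pi))$ for an explicitly described set $S$ otherwise; (ii) deduce $\Phi(\Pi)\subseteq\Pi'$ and $\Phi(\hat\Pi)\subseteq\hat\Pi'$ (their Corollary~\ref{c-oneinc}); and (iii) conclude equality via an orbit-\emph{size} comparison $|\Pi|=|\Pi'|$ and $|\hat\Pi|=|\hat\Pi'|$ (their Corollary~\ref{c-orbsize}), which itself rests on their Lemma~\ref{l-pkvaldbl} showing that $o\circ\Phi$ preserves the peak/valley/double-step type of each letter. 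You identify exactly the same crux (the case analysis that yields $o(\Phi(\varphi_x(\pi)))=\varphi_x(o(\Phi(\pi)))$ on the non-frozen inputs), and you correctly observe that on the frozen inputs the equivariance fails as stated but still produces a permutation in the same cyclic orbit, which is all that part~(a) requires. Where you diverge: for~(b) you close the argument directly from the restricted equivariance identity $\Phi\circ\hat\varphi_S=\hat\psi_S\circ\Phi$, which indeed gives both inclusions at once and renders the orbit-size count redundant there; for~(a) you finish with an orbit-\emph{count} argument (number of valley-hopping orbits equals number of cyclic valley-hopping orbits, since $\psi_S=o^{-1}\circ\varphi_S\circ o$ makes the actions conjugate) together with the bijectivity of $\Phi$, instead of the paper's orbit-size count. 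Both finishing moves are valid; note however that your route does not actually bypass the paper's Lemma~\ref{l-pkvaldbl}, since that lemma is also invoked inside the case analysis of the key equivariance lemma, so the apparent savings is smaller than it might seem. Your alternative suggestion of ``running the symmetric argument with $\Phi^{-1}$'' is more speculative and would require establishing a separate reverse equivariance, so the orbit-count finish is the one to rely on. The final parenthetical about $R_\Pi$ and a $\Fix$-analogue is not needed for the theorem and is somewhat tangential; the homomesy of $\fix$ falls out directly from Theorem~\ref{t-rixmesic} together with $\Rix=\Fix\circ\Phi$ once Theorem~\ref{t-main} is in hand.
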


In other words, $\Phi$ sends orbits of the valley-hopping action to orbits of cyclic valley-hopping---so that these actions are in sense ``the same'' up to $\Phi$---and the restricted versions of valley-hopping and cyclic valley-hopping are related in the same way. Before reviewing the definition of $\Phi$ and working toward the proof of Theorem \ref{t-main}, we note that the following is an immediate consequence of Theorems \ref{t-rixmesic} and \ref{t-main} since $\Rix(\pi)=\Fix(\Phi(\pi))$ implies $\rix(\pi)=\fix(\Phi(\pi))$.

\begin{thm} \label{t-fixmesic}
The $\fix$ statistic is 1-mesic under cyclic valley-hopping.
\end{thm}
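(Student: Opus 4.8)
The plan is to deduce Theorem~\ref{t-fixmesic} directly from Theorems~\ref{t-rixmesic} and~\ref{t-main}, together with the defining property $\Rix(\pi)=\Fix(\Phi(\pi))$ of Lin and Zeng's bijection $\Phi$. First I would fix an arbitrary orbit $\Pi^{\prime}$ of cyclic valley-hopping on $\mathfrak{S}_n$, choose any $\tau\in\Pi^{\prime}$, and set $\pi\coloneqq\Phi^{-1}(\tau)$ and $\Pi$ equal to the valley-hopping orbit containing $\pi$. By Theorem~\ref{t-main}(a) we have $\Phi(\Pi)=\Pi^{\prime}$, and since $\Phi\colon\mathfrak{S}_n\to\mathfrak{S}_n$ is a bijection it restricts to a bijection $\Pi\to\Pi^{\prime}$. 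I would also note here that, because $\Phi$ is a bijection and the valley-hopping orbits partition $\mathfrak{S}_n$, every cyclic valley-hopping orbit arises in exactly this way from a (unique) valley-hopping orbit, so no orbit is missed.

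Next I would compare the two statistics termwise across this bijection. Since $\Rix(\pi)=\Fix(\Phi(\pi))$ for every $\pi$, taking cardinalities yields $\rix(\pi)=\fix(\Phi(\pi))$, and hence
\[
\sum_{\tau\in\Pi^{\prime}}\fix(\tau)=\sum_{\pi\in\Pi}\fix(\Phi(\pi))=\sum_{\pi\in\Pi}\rix(\pi).
\]
By Theorem~\ref{t-rixmesic}, $\rix$ is $1$-mesic under valley-hopping, so the rightmost sum equals $|\Pi|=|\Pi^{\prime}|$. Dividing through by $|\Pi^{\prime}|$ shows that the average value of $\fix$ over $\Pi^{\prime}$ is $1$; since $\Pi^{\prime}$ was arbitrary, $\fix$ is $1$-mesic under cyclic valley-hopping.

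I do not expect any genuine obstacle in this final step: all the substantive work lives in Theorem~\ref{t-rixmesic} (via the bijection $\phi\colon R_{\Pi}\to\Pi$ of Proposition~\ref{p-bij}) and in Theorem~\ref{t-main}(a) (that $\Phi$ carries valley-hopping orbits onto cyclic valley-hopping orbits). The only point that needs a little care is that Theorem~\ref{t-main} is stated with respect to a chosen basepoint $\pi$ and its orbits, so I would make the surjectivity remark above explicit—namely that each cyclic valley-hopping orbit equals $\Phi(\Pi)$ for some valley-hopping orbit $\Pi$—which is immediate from $\Phi$ being a bijection. An equivalent basepoint-free phrasing is possible, but matching the statement of Theorem~\ref{t-main} as written keeps the argument cleanest.
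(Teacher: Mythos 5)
Your argument is correct and is exactly the paper's reasoning: the paper notes that Theorem~\ref{t-fixmesic} is an immediate consequence of Theorems~\ref{t-rixmesic} and~\ref{t-main} together with $\Rix(\pi)=\Fix(\Phi(\pi))$, which is precisely what you spell out. Your surjectivity remark (that every cyclic valley-hopping orbit is the $\Phi$-image of a valley-hopping orbit) is a reasonable bit of bookkeeping that the paper leaves implicit.
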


\begin{remark}
In \cite{LaCroixRoby} and \cite{Sheridan-Rossi}, the authors show that the number of fixed points is homomesic under some ``Foatic actions", which are compositions of the form $f \circ o^{-1} \circ g \circ o$ where $f,g$ are dihedral actions (such as the reverse map, the inverse map, and the complement map). We checked whether the rix statistic is homomesic under any Foatic actions, but found counterexamples for all of them.
\end{remark}

\subsection{The bijection \texorpdfstring{$\Phi$}{Phi}}

Let $\pi\in\mathfrak{S}_n$ have rix-factorization 
\[
\pi=\alpha_1 \alpha_2 \cdots \alpha_k \beta.
\]
Also, let $\Rix(\pi)=\{\pi_j,\pi_{j+1},\dots,\pi_n\}$ and let $\delta$ be defined by $\beta\coloneqq\delta\pi_j\pi_{j+1}\cdots\pi_n$---that is, $\delta$ is obtained from $\beta$ upon removing all rixed points. If $\delta=d_1 d_2 \cdots d_l$, then let
\[
\tilde{\delta}\coloneqq(d_1,d_l,d_{l-1},\dots,d_2)
\]
and for each $\alpha_i = a_1 a_2 \cdots a_l$, let
\[
\tilde{\alpha}_i\coloneqq(a_l,a_{l-1},\dots,a_1).
\]
Then Lin and Zeng define $\Phi(\pi)$ to be the following concatenation of cycles:
\begin{equation}
\Phi(\pi)\coloneqq\tilde{\alpha}_1\tilde{\alpha}_2\cdots\tilde{\alpha}_k\tilde{\delta}(\pi_j)(\pi_{j+1})\cdots(\pi_n).\label{e-Phidef}
\end{equation}
For example, given $\pi=7\,6\,9\,1\,8\,4\,2\,3\,5\,10\,11$, we have
\[
\Phi(\pi)=(9,6,7)(8,1)(4,3,2)(5)(10)(11).
\]

In (\ref{e-Phidef}), each cycle is written with its largest letter first, the cycles of length at least 2 are arranged in decreasing order of their largest letter,\footnote{If $x_i$ denotes the last letter of $\alpha_i$, then $x_1 > x_2 > \cdots > x_k > \beta_1(\pi)$ \cite[Proposition 9 (i)]{Lin2015}.} and the fixed points are arranged in increasing order and after the cycles of length at least 2. However, for our purposes, we will need to rearrange the order of the cycles to be in line with canonical cycle representation. To that end, let us write $\Phi(\pi)$ as
\begin{equation}
\Phi(\pi)=\tilde{\delta}\mu_k\tilde{\alpha}_k\mu_{k-1}\tilde{\alpha}_{k-1}\cdots\mu_1\tilde{\alpha}_1\mu_0\label{e-can}
\end{equation}
where each $\mu_i$ consists of all fixed points (in increasing order) which are greater than the first entry of the previous cycle and (if $i>0$) less than the first entry of the subsequent cycle. Continuing with the example $\pi=7\,6\,9\,1\,8\,4\,2\,3\,5\,10\,11$, the cycles of $\Phi(\pi)$ are rearranged to become
\[
\Phi(\pi)=(4,3,2)(5)(8,1)(9,6,7)(10)(11)
\]
so that $\mu_2=(5)$, $\mu_1$ is empty, and $\mu_0=(10)(11)$.

\subsection{Proof of Theorem \ref{t-main}}

Our proof of Theorem \ref{t-main} will require a few additional lemmas.
\begin{lem} \label{l-pkvaldbl}
Let $\pi\in\mathfrak{S}_n$ and $x\in[n]$. Then:
\begin{enumerate}
\item[\normalfont(a)]  $x$ is a peak of $\pi$ if and only if $x$ is a peak of $o(\Phi(\pi))$;
\item[\normalfont(b)]  $x$ is a valley of $\pi$ if and only if $x$ is a valley of $o(\Phi(\pi))$;
\item[\normalfont(c)]  $x$ is a double ascent or a double descent of $\pi$ if and only if $x$ is a double ascent or double descent of $o(\Phi(\pi))$.
\end{enumerate}
\end{lem}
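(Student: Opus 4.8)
The plan is to deduce (a), (b), and (c) simultaneously from a single statement about the local shape of each letter. For a word $w$ with $\infty$ prepended and appended and a letter $x$ of $w$, let $\nu_w(x)$ denote the multiset $\{\operatorname{sgn}(x-L),\operatorname{sgn}(x-R)\}$, where $L$ and $R$ are the letters immediately to the left and right of $x$ in $w$. Then $\nu_w(x)$ equals $\{+,+\}$ exactly when $x$ is a peak of $w$, equals $\{-,-\}$ exactly when $x$ is a valley, and equals $\{+,-\}$ exactly when $x$ is a double ascent or double descent; so the whole lemma follows once we show that $\nu_\pi(x)=\nu_{o(\Phi(\pi))}(x)$ for every $x\in[n]$.

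I would first pin down both words. Write $\pi=\alpha_1\cdots\alpha_k\beta$ with $\beta=\delta\,r_1\cdots r_p$, where $r_1<\cdots<r_p$ are the rixed points (Definition \ref{d-rixpt} makes $\Rix(\pi)$ a contiguous increasing suffix of $\pi$) and $\delta=d_1\cdots d_l$ with $d_1=\beta_1(\pi)$, and let $x_i$ be the last (hence largest) letter of $\alpha_i$. Applying $o$ to \eqref{e-can} and erasing parentheses, $o(\Phi(\pi))$ is the concatenation $\tilde\delta\,\mu_k\,\tilde\alpha_k\,\mu_{k-1}\,\tilde\alpha_{k-1}\cdots\mu_1\,\tilde\alpha_1\,\mu_0$ in which, read as words, $\tilde\alpha_i$ is the reversal of $\alpha_i$, $\tilde\delta$ is $d_1d_ld_{l-1}\cdots d_2$, and $\mu_k\mu_{k-1}\cdots\mu_0$ (in this order) is $r_1r_2\cdots r_p$ cut into blocks by the thresholds $d_1<x_k<\cdots<x_1$; here I use $x_1>\cdots>x_k>\beta_1(\pi)$ from \cite[Prop.~9(i)]{Lin2015}, that $d_2,\dots,d_l<d_1$ by the last assertion of Lemma \ref{l-xbet}, and that $\delta\neq\varnothing$ forces $|\delta|\ge 2$ (then $\beta$ is not increasing, so $\beta_1(\pi)$ is the largest descent of $\beta$, whence the second letter of $\beta$ lies below $\beta_1(\pi)$ and so belongs to $\delta$). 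The degenerate case $\delta=\varnothing$, i.e.\ $\beta_1(\pi)=r_1$ is itself a rixed point, is a minor variant in which $\tilde\delta$ is absent and $r_1$ heads the leading block of rixed points.

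Then I would run through the letters of $\pi$ by position. For a letter interior to some $\alpha_i$, or one of $d_2,\dots,d_{l-1}$, the relevant reversal merely swaps its two neighbors, so $\nu$ is preserved. Each remaining ``boundary'' letter $x$ shares exactly one of its two neighbors between $\pi$ and $o(\Phi(\pi))$, so it is enough to check that the other neighbor compares to $x$ the same way in both words; the size inequalities above handle every case. For the first letter of $\alpha_i$, that other neighbor exceeds $x_i$ (and the first letter of $\alpha_i$ is $<x_i$) in both words. For $x_i$ itself, both neighbors lie below $x_i$ in both words, so $x_i$ is a peak on each side. For $x=\beta_1(\pi)$ one obtains $\nu=\{+,-\}$ in both words (it heads $o(\Phi(\pi))$, so its left neighbor is $\infty$ and its right neighbor is $d_l<d_1$; in $\pi$ its left neighbor is $x_k$ or $\infty$ and its right neighbor is $d_2<d_1$). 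For the last letter of $\delta$, the other neighbor ($r_1$ in $\pi$, $d_1$ in $o(\Phi(\pi))$) exceeds $d_l$ in both. Finally, for a rixed point $r_m$, using that a block-leader in $o(\Phi(\pi))$ is preceded by $d_2$ or by the first letter of some $\alpha_{i+1}$---both smaller than $r_m$---while a block-end is followed by some $x_i$ or by $\infty$---both larger than $r_m$---one finds that $r_m$ is a double ascent of $o(\Phi(\pi))$ unless $m=1$ and $\delta=\varnothing$, in which case $r_1=\beta_1(\pi)$ heads $o(\Phi(\pi))$ and is a valley; either way this agrees with $\pi$ by Lemma \ref{l-valdasc}.

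I expect the rixed-point case to be the main obstacle: in $\pi$ the rixed points form a single increasing suffix, but in $o(\Phi(\pi))$ they are dispersed among the blocks $\mu_0,\dots,\mu_k$, so the argument requires carefully identifying the cycle immediately before and after each $\mu_i$, together with the bookkeeping around $\tilde\delta$ and the degenerate case $\delta=\varnothing$. Everything else is just an appeal to the ordering relations among the $x_i$, $\beta_1(\pi)$, the $d_j$, and the rixed points, all already available from Lemmas \ref{l-xbet} and \ref{l-valdasc} and from \cite{Lin2015}.
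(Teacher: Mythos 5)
Your argument is correct but takes a genuinely different route from the paper. The paper proves only the \emph{forward} implications of (a) and (b)---that a peak of $\pi$ stays a peak, and a valley stays a valley---via casework on whether $x$ lies in some $\alpha_i$, in $\delta$, or is $\beta_1(\pi)$; it then obtains the converse by a counting trick (since $o\circ\Phi$ is a bijection of $\mathfrak{S}_n$, the inequality $\pk(\pi)\le\pk(o(\Phi(\pi)))$ must actually be an equality, and likewise for valleys), and (c) follows because every letter is a peak, valley, or double ascent/descent. You instead establish the stronger claim directly: the unordered pair of sign-comparisons $\nu_w(x)$ with the two neighbors is preserved letter by letter, which yields all three parts at once without the counting argument. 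The price you pay is that you must handle the double ascents and double descents explicitly---in particular the dispersed rixed points in the blocks $\mu_0,\dots,\mu_k$ and the boundary letters of each $\alpha_i$ and of $\delta$---whereas the paper only needs to track peaks and valleys. Your approach is more constructive; the paper's is shorter.

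One small slip: when $|\delta|=l\ge 3$, the letter $d_2$ is \emph{not} a ``swap'' case. As a word, $\tilde\delta=d_1d_ld_{l-1}\cdots d_3d_2$, so $d_2$ is the last letter of that block; its left neighbor is $d_3$ (shared with $\pi$) but its right neighbor changes from $d_1$ in $\pi$ to the first entry of the next cycle (some $r_j$ or $x_k$) in $o(\Phi(\pi))$. The swap category should be $d_3,\dots,d_{l-1}$, and $d_2$ belongs with the other boundary letters. This doesn't break anything---both the old and new ``other'' neighbors exceed $d_1>d_2$, so the sign is $-$ in both words---but it should be stated as a boundary case rather than a swap.
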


Before giving the proof of Lemma \ref{l-pkvaldbl}, let us briefly describe the intuition behind this lemma. The map $o \circ \Phi$ takes a permutation $\pi$ in one-line notation, considers the permutation $\Phi(\pi)$ in cycle notation where the cycles are determined by the rix-factorization of $\pi$, but then erases the parentheses to obtain the permutation $o(\Phi(\pi))$ in one-line notation. The overarching idea of the proof is to show that if $x$ is a peak of $\pi$ and if $o \circ \Phi$ changes the neighboring letters of $\pi$, then $x$ will stay a peak, and that the same is true if $x$ is instead a valley. However, we will need to carefully check a number of cases to verify that this is indeed true.

\begin{proof}
We shall first establish the forward directions of (a) and (b): if $x$ is a peak (respectively, valley) of $\pi$, then $x$ is a peak (respectively, valley) of $o(\Phi(\pi))$.

\medskip{}
\noindent \textbf{Case 1:} $x$ is an element of $\alpha_i$ for some $1\leq i\leq k$.

Let us write $\alpha_i = a_1 \cdots a_j x a_{j+1} \cdots a_l$, so that in $\Phi(\pi)$ we have 
\[
\tilde{\alpha}_i=(a_l,\dots,a_{j+1},x,a_j,\dots,a_1).
\]
Suppose that $x$ is a peak of $\pi$. Because each $\alpha_i$ ends with a descent, $x$ cannot be the first letter of $\alpha_i$. This means that $a_1\cdots a_j$ cannot be empty but $a_{j+1}\cdots a_l$ can. If $a_{j+1}\cdots a_l$ is not empty, then $x$ is clearly a peak of $o(\Phi(\pi))$. If $a_{j+1}\cdots a_l$ is empty, then $x$ would be the first letter of the cycle $\tilde{\alpha}_i$, so as long as $\tilde{\alpha}_i$ is not the first cycle of $\Phi(\pi)$, we are guaranteed by canonical cycle representation that $x$ is a peak of $o(\Phi(\pi))$. If $\tilde{\alpha}_i$ were the first cycle of $\Phi(\pi)$, then that means $\delta$ is empty and that there are no rixed points of $\pi$ smaller than $x$; however, that would imply that $x$ is immediately followed in $\pi$ by a rixed point larger than $x$, which is a contradiction because $x$ is the last letter of $\alpha_i$ and thus a descent. Therefore, $x$ is a peak of $o(\Phi(\pi))$.

Now, suppose that $x$ is a valley of $\pi$. Because each $\alpha_i$ ends with a descent, $x$ cannot be the last letter of $\alpha_i$. This means that $a_1\cdots a_j$ can be empty but $a_{j+1}\cdots a_l$ cannot. Similar to above, $x$ is clearly a valley of $o(\Phi(\pi))$ if $a_1\cdots a_j$ is not empty. If $a_1\cdots a_j$ is empty, then $x$ is the last letter of the cycle $\tilde{\alpha}_i$, in which case $x$ would still be a valley of $o(\Phi(\pi))$ by the definition of canonical cycle representation.

\medskip{}
\noindent \textbf{Case 2:} $x$ is in $\beta$ but is neither $\beta_1(\pi)$
nor a rixed point of $\pi$.

Recall that $\delta=d_1d_2\cdots d_l$ is obtained from $\beta$ by deleting all the rixed points, and that 
\[
\tilde{\delta}=(d_1,d_l,d_{l-1},\dots,d_3,d_2).
\]
In this case, we have $x=d_i$ for some $i\neq1$, and it is easy to see that the desired result holds when $i\neq2$ and $i\neq l$. So it remains to check the cases when $x=d_2$ and $x=d_l$. Recall that $d_1>x$ (guaranteed by Lemma \ref{l-xbet}), and that $d_l$ is either the last letter of $\pi$ or is followed by a rixed point (which is by definition larger than $d_1$ and thus larger than $d_l$); hence, in none of these cases can $x$ be a peak of $\pi$. Then consider the following subcases:

\begin{itemize}
\item Suppose that $x=d_2=d_l$, so that $\delta=d_1x$ and $\tilde{\delta}=(d_1,x)$. Then $x$ is a valley in both $\pi$ and $o(\Phi(\pi))$.
\item Suppose that $x=d_2\neq d_l$, so that $\delta=d_1 x d_3 \cdots d_l$ and $\tilde{\delta}=(d_1,d_l,\dots,d_3,x)$. Then $x$ is a valley of $\pi$ if and only if $x<d_3$. As the last entry of the cycle $\tilde{\delta}$ in canonical cycle representation, $x$ is either followed by a larger letter in $o(\Phi(\pi))$ or is the last letter of $o(\Phi(\pi))$. Either way, we see that $x$ is also a valley of $o(\Phi(\pi))$ when $x<d_3$.
\item Suppose that $x=d_l\neq d_2$, so that $\delta=d_1 d_2 \cdots d_{l-1} x$ and $\tilde{\delta}=(d_1,x,d_{l-1},\dots,d_3)$. Then $x$ is a valley if and only if $x<d_{l-1}$, in which case it is also a valley of $o(\Phi(\pi))$.
\end{itemize}

\noindent \textbf{Case 3:} $x=\beta_1(\pi)$ is a rixed point of
$\pi$.

By Lemma \ref{l-valdasc}, we know that in this case $x$ is a valley of $\pi$. Note that $(x)$ is a fixed point of $\Phi(\pi)$ and is in fact the first cycle of $\Phi(\pi)$, so $x$ is the first letter of $o(\Phi(\pi))$. Per canonical cycle representation, $x$ is either followed by a larger letter in $o(\Phi(\pi))$ or is the last letter of $o(\Phi(\pi))$. Either way, $x$ is also a valley of $o(\Phi(\pi))$.

\medskip{}
The above three cases are the only ones that we need consider. Indeed, if $x$ is a rixed point of $\pi$ but is not $\beta_1(\pi)$, then $x$ is a double ascent of $\pi$ by Lemma \ref{l-valdasc}. Furthermore, if $x=\beta_1(\pi)$ is not a rixed point of $\pi$, then $x$ is a double descent of $\pi$ by Lemma \ref{l-xbet} and the fact that $x=\beta_1(\pi)$ is either the first letter of $\pi$ or is preceded by a peak. Hence, the forward directions of (a) and (b) follow.

Now, note that the forward direction of (a) implies that $\pk(\pi)\leq\pk(o(\Phi(\pi)))$ for all $\pi\in\mathfrak{S}_n$, where $\pk(\pi)$ denotes the number of peaks of $\pi$. Moreover, the $o(\Phi(\pi))$ span all permutations in $\mathfrak{S}_n$ because $o$ and $\Phi$ are bijections; so, if it were not true that $\pk(\pi)=\pk(o(\Phi(\pi)))$ for all $\pi\in\mathfrak{S}_n$, then summing over all $\pi\in\mathfrak{S}_n$ would result in the absurdity that the total number of peaks over all $\pi\in\mathfrak{S}_n$ is less than the total number of peaks over all $\pi\in\mathfrak{S}_n$. Hence, the backward direction of (a) is established, and the backward direction of (b) follows from the same reasoning.

Finally, it is clear that (a) and (b) imply (c), and thus the proof is complete.
\end{proof}

\begin{cor} \label{c-orbsize} 
Let $\pi$ be a permutation, $\Pi$ the valley-hopping orbit containing $\pi$, $\hat{\Pi}$ the restricted valley-hopping orbit containing $\pi$, $\Pi^{\prime}$ the cyclic valley-hopping orbit containing $\Phi(\pi)$, and $\hat{\Pi}^{\prime}$ the restricted cyclic valley-hopping orbit containing $\Phi(\pi)$. Then:
\begin{enumerate}
\item[\normalfont(a)]  $|\Pi|=|\Pi^{\prime}|$
\item[\normalfont(b)]  $|\hat{\Pi}|=|\hat{\Pi}^{\prime}|$
\end{enumerate}
\end{cor}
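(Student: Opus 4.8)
The plan is to compute the sizes of all four orbits by a single orbit--stabilizer argument. Each of valley-hopping, cyclic valley-hopping, and their restricted versions is a $\mathbb{Z}_2^n$-action whose generators $g_x$ (that is, $\varphi_x$, $\psi_x$, $\hat{\varphi}_x$, or $\hat{\psi}_x$) are commuting involutions with two key properties: (i) $g_x$ either fixes a permutation $\tau$ or alters $\tau$ only by rearranging entries that are $\leq x$ --- in one-line notation for $\varphi_x$ and $\hat{\varphi}_x$, and in $o(\tau)$ for $\psi_x$ and $\hat{\psi}_x$, since $\psi_x=o^{-1}\varphi_x o$; and (ii) whenever $g_x$ acts nontrivially at $\tau$ it moves the entry $x$ itself (for $\varphi_x$ this holds because, writing $\pi=w_1 w_2 x w_4 w_5$ as in the definition of valley-hopping, $x$ being a double ascent forces $w_4=\varnothing$ and $w_2\neq\varnothing$ so that $x$ is pushed rightward past $w_2$, and symmetrically when $x$ is a double descent). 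Granting (i) and (ii), I would process the elements of a subset $S\subseteq[n]$ in decreasing order and track the position of the largest element of $S$ lying in the active set $A(\tau)\coloneqq\{\,x:g_x\text{ acts nontrivially at }\tau\,\}$; this shows that the stabilizer of $\tau$ is exactly $\{\,S:S\cap A(\tau)=\varnothing\,\}$, so the orbit of $\tau$ has size $2^{|A(\tau)|}$. It therefore remains to check, in each part, that the active set at $\pi$ for the action on the $\pi$-side equals the active set at $\Phi(\pi)$ for the action on the $\Phi(\pi)$-side; in fact I expect these to be the same subset of $[n]$.

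For part (a), the active set of valley-hopping at $\pi$ is the set of double ascents and double descents of $\pi$, and the active set of cyclic valley-hopping at $\Phi(\pi)$ is the set of double ascents and double descents of $o(\Phi(\pi))$. These coincide by Lemma \ref{l-pkvaldbl}(c), so $|\Pi|=2^{|A|}=|\Pi^{\prime}|$.

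For part (b), the active set of restricted valley-hopping at $\pi$ is
\[
A=\{\,x:x\text{ is a double ascent or double descent of }\pi\,\}\setminus\bigl(\Rix(\pi)\cup\{\beta_1(\pi)\}\bigr),
\]
and the active set of restricted cyclic valley-hopping at $\Phi(\pi)$ is
\[
A^{\prime}=\{\,x:x\text{ is a double ascent or double descent of }o(\Phi(\pi))\,\}\setminus\bigl(\Fix(\Phi(\pi))\cup\{f\}\bigr),
\]
where $f$ is the first letter of $o(\Phi(\pi))$. The double-ascent/descent portions agree by Lemma \ref{l-pkvaldbl}(c) and $\Rix(\pi)=\Fix(\Phi(\pi))$ by the defining property of $\Phi$, so it suffices to prove $f=\beta_1(\pi)$. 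For this, recall from \eqref{e-Phidef} that the cycles of $\Phi(\pi)$ are $\tilde{\alpha}_1,\dots,\tilde{\alpha}_k$, the cycle $\tilde{\delta}$ (present exactly when $\delta\neq\varnothing$, i.e.\ when $\beta$ is not increasing, in which case $\beta_1(\pi)\notin\Rix(\pi)$ and $\beta_1(\pi)=d_1$), and the singletons $(\pi_j),\dots,(\pi_n)$ formed by the rixed points. The largest entry of $\tilde{\alpha}_i$ is the last letter $x_i$ of $\alpha_i$, and $x_1>x_2>\cdots>x_k>\beta_1(\pi)$ as noted above; the largest entry of $\tilde{\delta}$ (when present) is $d_1=\beta_1(\pi)$, using Lemma \ref{l-xbet} to see $d_2,\dots,d_l<\beta_1(\pi)$; and every rixed point is $\geq\beta_1(\pi)$ by Definition \ref{d-rixpt}, with equality only for the smallest one and only when $\beta_1(\pi)\in\Rix(\pi)$, equivalently $\delta=\varnothing$, again by Lemma \ref{l-xbet}. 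A brief case check then shows that the cycle with the smallest largest-entry is $\tilde{\delta}$ when $\delta\neq\varnothing$ and $(\beta_1(\pi))$ when $\delta=\varnothing$; either way its first entry is $\beta_1(\pi)$, so in canonical cycle representation $o(\Phi(\pi))$ begins with $\beta_1(\pi)$. Hence $A=A^{\prime}$ and $|\hat{\Pi}|=2^{|A|}=2^{|A^{\prime}|}=|\hat{\Pi}^{\prime}|$.

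The step I expect to be the main obstacle is making the first paragraph airtight, especially property (ii): that an active generator $g_x$ genuinely displaces the entry $x$. This is precisely what makes the decreasing-order bookkeeping for the stabilizer valid and what lets us read off each orbit size as $2^{|A(\tau)|}$ from a single base point, without a separate proof that the active set is constant along the orbit. Once Lemma \ref{l-pkvaldbl} and property (ii) are in place, the rest --- unwinding the definitions of the restricted actions and the case analysis establishing $f=\beta_1(\pi)$ --- should be routine.
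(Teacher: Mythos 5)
Your proof is correct and follows essentially the same strategy as the paper: compute each orbit size as $2^{|A|}$ where $A$ is the set of ``active'' positions (double ascents/descents not excluded by the restriction), then use Lemma~\ref{l-pkvaldbl}(c) together with $\Rix(\pi)=\Fix(\Phi(\pi))$ to match the two sides. But your version is actually tighter than the paper's in one respect. For part~(b) the paper splits into two cases according to whether $\beta_1(\pi)$ is a rixed point, and in the case where it is not, states the orbit size as $2^{\dbl(\pi)-\rix(\pi)}$; this omits the $-1$ for $\beta_1(\pi)$ itself, which by Lemma~\ref{l-xbet} is a double descent fixed by $\hat{\varphi}_{\beta_1(\pi)}$. (Already $\pi=21$ shows the paper's formula gives $2$ while $|\hat{\Pi}|=1$.) The two formulas in the paper are off by the same factor, so the equality $|\hat{\Pi}|=|\hat{\Pi}^{\prime}|$ survives, but the intermediate expressions are wrong. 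Your formulation of $A$ explicitly removes $\Rix(\pi)\cup\{\beta_1(\pi)\}$, which is the correct count, and by instead proving that the two active sets coincide as subsets of $[n]$ (via the identification $f=\beta_1(\pi)$) you sidestep the case split entirely. Your verification that the first letter of $o(\Phi(\pi))$ equals $\beta_1(\pi)$ is correct. Your worry about property~(ii) is resolvable: if $x$ is a double ascent of $\tau=w_1w_2xw_4w_5$ then $w_4=\varnothing$ and $w_2\neq\varnothing$, so $\varphi_x(\tau)=w_1xw_2w_5$ moves $x$ leftward past $w_2$ (you wrote ``rightward,'' but the point is only that $x$ is displaced), and since every $\varphi_y$ with $y<x$ only rearranges entries $\leq y$, the decreasing-order stabilizer argument goes through for all four actions. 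So the orbit--stabilizer computation is valid, and your proof is a sound and slightly more careful variant of the paper's.
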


\begin{proof}
Let $\dbl(\pi)$ denote the total number of double ascents and double descents of $\pi$. Then the number of permutations in $\Pi$ is equal to $2^{\dbl(\pi)}$. Similarly, the number of permutations in $\Pi^{\prime}$ is $2^{\dbl(o(\Phi(\pi)))}$. Lemma \ref{l-pkvaldbl} (c) implies $\dbl(\pi)=\dbl(o(\Phi(\pi)))$, which completes the proof of (a). 

To prove (b), let us first make the following observations. First, it is clear from the definition of canonical cycle representation that every fixed point of $\Phi(\pi)$ is a double ascent of $o(\Phi(\pi))$ unless the fixed point is the first cycle of $\Phi(\pi)$, in which case it is a valley of $o(\Phi(\pi))$. In addition, it is easy to see that the first cycle of $\Phi(\pi)$ is a fixed point if and only if the first cycle is $(\beta_1(\pi))$, which occurs if and only if $\beta_1(\pi)$ is a rixed point of $\pi$. As such, let us divide into the following cases:

\medskip{}
\noindent \textbf{Case 1:} $\beta_1(\pi)$ is a rixed point of $\pi$. 

By Lemma \ref{l-valdasc}, we know that $\beta_1(\pi)$ is a valley of $\pi$ while all of the other rixed points are double ascents of $\pi$, so the number of permutations in $\hat{\Pi}$ is $2^{\dbl(\pi)-\rix(\pi)+1}$. On the other hand, from the discussion above, we know that the number of permutations in $\hat{\Pi}^{\prime}$ is $2^{\dbl(o(\Phi(\pi)))-\fix(\Phi(\pi))+1}=2^{\dbl(\pi)-\rix(\pi)+1}$.

\medskip{}
\noindent \textbf{Case 2:} $\beta_1(\pi)$ is not a rixed point of $\pi$. 

Appealing to Lemma \ref{l-valdasc} again, we see that all of the rixed points of $\pi$ are double ascents of $\pi$, so the number of permutations in $\hat{\Pi}$ is $2^{\dbl(\pi)-\rix(\pi)}$. Accordingly, the number of permutations in $\hat{\Pi}^{\prime}$ is $2^{\dbl(o(\Phi(\pi)))-\fix(\Phi(\pi))}=2^{\dbl(\pi)-\rix(\pi)}$.

\medskip{}
\noindent Thus the proof of (b) is complete.
\end{proof}

We now seek to show that whenever two permutations $\pi$ and $\sigma$ are in the same (restricted) valley-hopping orbit, then $\Phi(\pi)$ and $\Phi(\sigma)$ are in the same (restricted) cyclic valley-hopping orbit. Toward this goal, we prove the following lemma, which will again require extensive casework.

\begin{lem} \label{l-Phiclosed}
Let $\sigma=\varphi_x(\pi)$ and let $\pi=\alpha_1\alpha_2\cdots\alpha_k\beta$ be the rix-factorization of $\pi$.
\begin{enumerate}
\item[\normalfont(a)]  If $x$ is neither $\beta_1(\pi)$ nor a rixed point of $\pi$, then $\Phi(\sigma)=\psi_x(\Phi(\pi))$.
\item[\normalfont(b)]  If $x$ is a rixed point of $\pi$, then $\Phi(\sigma)=\psi_S(\Phi(\pi))$ where $S=\{\beta_1(\pi)\}\cup\{\,y\in\Rix(\pi):y\leq x\,\}$.
\item[\normalfont(c)]  If $x=\beta_1(\pi)$, then $\Phi(\sigma)=\psi_S(\Phi(\pi))$
where $S=\{\beta_1(\sigma)\}\cup\{\,y\in\Rix(\sigma):y\leq x\,\}$.
\end{enumerate}
\end{lem}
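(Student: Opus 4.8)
Since the involutions $\psi_y$ commute and $\psi_y = o^{-1}\circ\varphi_y\circ o$, we have $\psi_S = o^{-1}\circ\varphi_S\circ o$, so in each of the three cases it suffices to establish the identity in one-line notation
\[
o(\Phi(\sigma)) = \varphi_S\bigl(o(\Phi(\pi))\bigr).
\]
The plan is to read the word $o(\Phi(\pi))$ directly off the canonical arrangement \eqref{e-can}: deleting parentheses, $o(\Phi(\pi))$ is the reversal of $\delta$ with $\beta_1(\pi)$ moved to the front (the word of $\tilde{\delta}$), followed by the increasing block $\mu_k$, the reversal of $\alpha_k$ (the word of $\tilde{\alpha}_k$), the block $\mu_{k-1}$, the reversal of $\alpha_{k-1}$, and so on down to the reversal of $\alpha_1$ and then $\mu_0$. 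Combining this with the rix-factorization algorithm of Definition \ref{defn:rix_lin_zeng}, the peak/valley/double-ascent dictionary of Lemma \ref{l-pkvaldbl}, and Lemmas \ref{l-valdasc}--\ref{l-xbet}, I will, in each case, (i) describe how the rix-factorization of $\sigma=\varphi_x(\pi)$ differs from that of $\pi$, and so how $\Phi(\sigma)$ differs from $\Phi(\pi)$ as a product of cycles, and (ii) verify that $\varphi_S$ acting on the word $o(\Phi(\pi))$ produces exactly the word so obtained. Parts (a) and (b) will be proved directly in this way, and part (c) will then be deduced from part (b).

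\emph{Part (a).} If $x$ is a peak or valley of $\pi$, then $\sigma=\pi$, while $x$ is a peak or valley of $o(\Phi(\pi))$ by Lemma \ref{l-pkvaldbl}, so $\varphi_x$ fixes $o(\Phi(\pi))$. If instead $x$ is a double ascent or double descent, then, being neither $\beta_1(\pi)$ nor a rixed point, $x$ belongs to a single rix-factor $\alpha_i$ or is a non-initial letter of $\delta$; in either case the run of smaller letters over which $\varphi_x$ hops $x$ never leaves that factor, since on its far side it is blocked by a larger letter (a terminal peak $x_i$, the last letter of the preceding $\alpha$-factor, or $\beta_1(\pi)=d_1$). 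Hence the rix-factorization of $\sigma$ is that of $\pi$ with $x$ hopped inside that one factor and nothing else changed, so $\Phi(\sigma)$ is $\Phi(\pi)$ with the corresponding cycle ($\tilde{\alpha}_i$ or $\tilde{\delta}$) internally permuted; its largest entry is unchanged, so the canonical order of the cycles---and with it the positions of every other cycle-word and every block $\mu_m$---is unchanged. It then remains to check that $\varphi_x$ acting on the word $o(\Phi(\pi))$ rearranges precisely that cycle-word in the required way: there $x$ has the opposite (double-descent/double-ascent) type, and the run it hops over is the reversal of the run $w_2$ (resp.\ $w_4$) it hopped over in $\pi$, depositing $x$ at the correct new spot. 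A short list of boundary sub-cases must be handled individually---$x$ being the second or last letter of its factor, the factor being $\tilde{\delta}$ with its special ``largest entry, then reversed tail'' shape, and $i=1$ so that the cycle-word abuts $\mu_0$ or the end of the word.

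\emph{Part (b).} If $x=\beta_1(\pi)$, then $\beta$ is increasing (Lemma \ref{l-xbet}) and $x$ is a valley of $\pi$ (Lemma \ref{l-valdasc}), so $\sigma=\pi$, $S=\{x\}$, and $\psi_x$ fixes $\Phi(\pi)$ because $x$ is a valley of $o(\Phi(\pi))$ by Lemma \ref{l-pkvaldbl}; this is the claim. So assume $x>\beta_1(\pi)$, whence $x$ is a double ascent of $\pi$ by Lemma \ref{l-valdasc}. The key structural step is to show that $\varphi_x$ hops $x$ leftward over every rixed point of $\pi$ smaller than $x$, then over all of $\delta$, then over exactly those rix-factors $\alpha_{i+1},\dots,\alpha_k$ whose largest letter lies below $x$---where $i=|\{\,m : x_m>x\,\}|$ and $x_m$ denotes the last letter of $\alpha_m$---coming to rest immediately to the left of $\alpha_{i+1}$. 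Feeding the resulting one-line word into Definition \ref{defn:rix_lin_zeng} then yields the rix-factorization
\[
\sigma = \alpha_1\mid\cdots\mid\alpha_i\mid x\,\alpha_{i+1}\cdots\alpha_k\,\delta\,\pi_j\cdots\pi_{r-1}\,\pi_{r+1}\cdots\pi_n,
\]
where $x=\pi_r$; in particular $\beta_1(\sigma)=x$ and $\Rix(\sigma)=\{\,y\in\Rix(\pi) : y>x\,\}$. Consequently, in passing from $\Phi(\pi)$ to $\Phi(\sigma)$ the cycles $\tilde{\alpha}_1,\dots,\tilde{\alpha}_i$ survive unchanged, while $\tilde{\alpha}_{i+1},\dots,\tilde{\alpha}_k$, the cycle $\tilde{\delta}$, and the singleton cycles on $\pi_j,\dots,\pi_r=x$ are amalgamated into one new cycle. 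On the other side $S=\{\beta_1(\pi)\}\cup\{\pi_j,\dots,\pi_r\}$, and I will follow the hops of these letters through $o(\Phi(\pi))$---conveniently in increasing order of value---to confirm that $\varphi_S$ carries out precisely that amalgamation at the word level, matching the word $o(\Phi(\sigma))$ read off \eqref{e-can}. This bookkeeping---pinning down where each hopped letter lands and how the intervening cycle-words are consumed and reglued---is the main obstacle of the lemma and is where most of the casework resides.

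\emph{Part (c).} If $\beta_1(\pi)$ is a rixed point, then exactly as in part (b) it is a valley of $\pi$, so $\sigma=\pi$, $\Rix(\sigma)=\Rix(\pi)$, $\beta_1(\sigma)=\beta_1(\pi)$, $S=\{\beta_1(\pi)\}$, and $\psi_S$ fixes $\Phi(\pi)$. Otherwise $\beta$ is not increasing, $\delta$ has length at least $2$, and $x=\beta_1(\pi)=d_1$ is a double descent of $\pi$; then $\varphi_x$ moves $x$ rightward past $d_2,\dots,d_l$ to just before the smallest rixed point of $\pi$, and one checks that $x$ is then a rixed point of $\sigma$ with $x\neq\beta_1(\sigma)$. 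Since $\varphi_x(\sigma)=\pi$, part (b) applied to the pair $(\sigma,x)$ gives $\Phi(\pi)=\psi_{S'}(\Phi(\sigma))$ with $S'=\{\beta_1(\sigma)\}\cup\{\,y\in\Rix(\sigma) : y\leq x\,\}$, and as $\psi_{S'}$ is an involution this is equivalent to $\Phi(\sigma)=\psi_{S'}(\Phi(\pi))$; since $S'$ is exactly the set $S$ in the statement, this proves part (c).
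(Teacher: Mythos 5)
Your overall framework for parts (a) and (b)---casing on where $x$ sits ($\alpha_i$, inside $\delta$, a rixed point of $\pi$, etc.), tracking how the rix-factorization of $\sigma=\varphi_x(\pi)$ differs from that of $\pi$, and then checking that $\varphi_S$ applied to the word $o(\Phi(\pi))$ produces $o(\Phi(\sigma))$---is the same as the paper's, which proves the lemma via five cases along these exact lines. The structural claims you make along the way are the right ones: in (a) the hop stays inside a single rix-factor/cycle because larger letters (the terminal peak $x_i$, the preceding $\alpha$-factor's last letter, or $\beta_1(\pi)$) block it on both sides, and in (b), when $x>\beta_1(\pi)$, one gets $\beta_1(\sigma)=x$ and $\Rix(\sigma)=\{\,y\in\Rix(\pi):y>x\,\}$ with the tail $\alpha_{i+1},\dots,\alpha_k,\delta$ and the singleton cycles below $x$ absorbed into the new $\tilde\delta^{\prime}$. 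What you explicitly do not supply is the bookkeeping you yourself flag as ``the main obstacle of the lemma'': the verification that, in $o(\Phi(\pi))$, each hopped letter ($\beta_1(\pi)$ and the small rixed points) actually lands where canonical cycle representation places it in $o(\Phi(\sigma))$. In the paper this is the bulk of Case~4 (identifying the blocking letter $a_q>\beta_1(\pi)$, checking the rixed points are each larger than everything to their left, etc.), and it is not enough to assert that it works---it is where the proof lives. The same applies, to a lesser degree, to the boundary subcases you list for part (a). As written, the proposal is a correct plan, not a proof.

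Where you do genuinely depart from the paper is part (c) when $\beta_1(\pi)$ is not a rixed point. The paper proves this directly (its Case 5, roughly a page of casework); you instead note that $\varphi_x$ deposits $x$ at the head of an increasing suffix of $\sigma$, so $x\in\Rix(\sigma)$ with $x>\beta_1(\sigma)$, apply part (b) to the pair $(\sigma,x)$ to get $\Phi(\pi)=\psi_{S}(\Phi(\sigma))$, and invert the involution $\psi_S$. This reduction is valid---$\varphi_x$ is an involution, $\psi_S$ is an involution, and the set $\{\beta_1(\sigma)\}\cup\{\,y\in\Rix(\sigma):y\le x\,\}$ produced by part (b) applied to $(\sigma,x)$ is verbatim the set $S$ in part (c)---and it genuinely eliminates the paper's Case~5. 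The only residual obligation is the one you wave at with ``one checks'': establishing $x\in\Rix(\sigma)\setminus\{\beta_1(\sigma)\}$, which follows from Lemma~\ref{l-xbet} (every $d_i<x$, every rixed point of $\pi$ exceeds $x$) but should be said. If you carry out the deferred verifications in (a) and (b) and flesh out this short check, the resulting proof is complete and measurably shorter than the paper's.
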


\begin{proof}
We divide into cases based on the position of $x$ in $\pi$. In all of the cases below, let 
\[
\sigma=\alpha_1^{\prime}\alpha_2^{\prime}\cdots\alpha_m^{\prime}\beta^{\prime}
\]
be the rix-factorization of $\sigma$. Cases 1--2 will establish part (a), whereas Cases 3--5 will prove parts (b) and (c).

\medskip{}
\noindent \textbf{Case 1:} $x$ is in $\alpha_i$ for some $1\leq i\leq k$. 

If $x$ is a peak or valley of $\pi$, then $\sigma=\pi$ and $x$ is also a peak or valley of $o(\Phi(\pi))$ by Lemma \ref{l-pkvaldbl}, which together imply $\Phi(\sigma)=\psi_x(\Phi(\pi))$. So, for the remainder of this case, let us assume that $x$ is neither a peak nor a valley of $\pi$. In particular, this means that $x$ is not the last letter of $\alpha_i$, since we know from Algorithm \ref{algorithm} that the last letter of each $\alpha_i$ is a peak of $\pi$. Observe that both the last letter of $\alpha_{i-1}$ (if $i>1$) and the last letter of $\alpha_i$ are larger than $x$; if $x$ were instead larger than either, then $x$ would have been considered by Algorithm \ref{algorithm} and thus removed from the valid factor prior to when $\alpha_i$ was added to the rix-factorization, which is impossible. Hence, upon applying $\varphi_x$ to $\pi$, the letter $x$ belongs to the same rix-factor, so the number of $\rix$-factors is unchanged. This means that $\pi$ and $\sigma$ have exactly the same $\rix$-factors except for $\alpha_i$ and $\alpha_i^{\prime}$, and similarly with the cycles in the canonical cycle representation of $\Phi(\pi)$ and $\Phi(\sigma)$.

Now, let us write
\[
\alpha_i=a_1 \cdots a_p x a_{p+1} \cdots a_l,\quad\text{so that}\quad\tilde{\alpha}_i=(a_l,\dots,a_{p+1},x,a_p,\dots,a_1).
\]
If $x$ is a double descent of $\pi$, then we have 
\[
\alpha_i^{\prime}=a_1 \cdots a_p a_{p+1}\cdots a_q x a_{q+1} \cdots a_l\quad\text{and}\quad\tilde{\alpha}_i^{\prime}=(a_l,\dots,a_{q+1},x,a_q,\dots,a_{p+1},a_p,\dots,a_1);
\]
here, $a_{q+1}$ is the closest letter to the right of $x$ in $\alpha_i$ that is larger than $x$. It is clear that when we apply $\psi_x$ to $\Phi(\pi)$, the cycle $\tilde{\alpha}_i$ is transformed to $\tilde{\alpha}_i^{\prime}$ and therefore $\Phi(\sigma)=\psi_x(\Phi(\pi))$. The case when $x$ is a double ascent of $\pi$ is similar.

\medskip{}
\noindent \textbf{Case 2:} $x$ is in $\beta$ but is neither $\beta_1(\pi)$ nor a rixed point of $\pi$.

By Lemma \ref{l-xbet}, we have $x<\beta_1(\pi)$, and therefore $x$ is also smaller than the rixed points of $\pi$. This means that $\Phi(\pi)$ and $\Phi(\sigma)$ have exactly the same cycles except for $\tilde{\delta}$ and $\tilde{\delta}^{\prime}$, and the remainder of the proof for this case follows in a similar way as in Case 1.

\medskip{}
\noindent \textbf{Case 3:} $x=\beta_1(\pi)$ is a rixed point of $\pi$.

We know from Lemma \ref{l-xbet} that, in this case, $x$ is the smallest rixed point of $\pi$. Moreover, by Lemma \ref{l-beta1}, we have $\beta_1(\sigma)=x$ and thus there are no rixed points of $\sigma$ smaller than $x$. This means that our set $S$ as defined in the statements of (b) and (c) is given by $S=\{x\}$. Note that $x$ is a valley of $\pi$ by Lemma \ref{l-valdasc} and therefore a valley of $o(\Phi(\pi))$ by Lemma \ref{l-pkvaldbl}. Since $x$ being a valley of $\pi$ implies $\sigma=\pi$, we have $\Phi(\sigma)=\psi_S(\Phi(\pi))$
as desired.

\medskip{}
\noindent \textbf{Case 4:} $x\neq\beta_1(\pi)$ is a rixed point of $\pi$.

Let 
\[
\Rix(\pi)=\{\pi_j,\pi_{j+1},\dots,\pi_{l-1},\pi_l=x,\pi_{l+1},\dots,\pi_n\}
\]
be the set of rixed points of $\pi$. Then we can write the rix-factorization of $\pi$ as
\[
\pi=\alpha_1\alpha_2\cdots\alpha_k\underset{\beta}{\underbrace{\delta\pi_j\cdots\pi_{l-1}x\pi_{l+1}\cdots\pi_n}}.
\]
As usual, we write
\[
\Phi(\pi)=\tilde{\delta}\mu_k\tilde{\alpha}_k\mu_{k-1}\tilde{\alpha}_{k-1}\cdots\mu_1\tilde{\alpha}_1\mu_0.
\]

From Lemma \ref{l-beta1}, we have that $x=\beta_1(\sigma)$. This means that the rix-factorization of $\sigma$ is given by 
\[
\sigma=\alpha_1\alpha_2\cdots\alpha_m\underset{\beta^{\prime}}{\underbrace{x\alpha_{m+1}\cdots\alpha_k\delta\pi_j\cdots\pi_{l-1}\pi_{l+1}\cdots\pi_n}.}
\]
Note that the last letter of $\alpha_m$ is the closest letter to the left of $x$ in $\pi$ that is larger than $x$, and that $\pi_j,\pi_{j+1},\dots,\pi_{l-1}$ are not rixed points of $\sigma$ because they are less than $x$. Taking $\delta^{\prime}$ to be the analogue of $\delta$ but for $\sigma$, we have 
\[
\delta^{\prime}=x\alpha_{m+1}\cdots\alpha_k\delta\pi_j\pi_{j+1}\cdots\pi_{l-1}
\]
and
\[
\Phi(\sigma)=\tilde{\delta}^{\prime}\mu_m^{\prime}\tilde{\alpha}_m^{\prime}\mu_{m-1}^{\prime}\tilde{\alpha}_{m-1}^{\prime}\cdots\mu_1^{\prime}\tilde{\alpha}_1^{\prime}\mu_0^{\prime}
\]
where $\tilde{\alpha}_i^{\prime}=\tilde{\alpha}_i$ for each $1\leq i\leq m$, $\mu_i^{\prime}=\mu_i$ for each $1\leq i\leq m-1$, and $\mu_m^{\prime}$ is obtained from $\mu_m$ by removing all the rixed points of $\pi$ that are less than or equal to $x$. 

Next, we characterize the cycle $\tilde{\delta}^{\prime}$. Assume that $\delta$ is nonempty; we omit the proof of the case where $\delta$ is empty as it is similar but slightly easier. If we write out the letters of $\delta$ and $\alpha_{m+1}\cdots\alpha_k$ as $\delta = d_1 d_2 \cdots d_p$ and $\alpha_{m+1}\cdots\alpha_k = a_1 a_2 \cdots a_q$, then we have 
\[
\tilde{\delta}=(d_1,d_p,\dots,d_2)\quad\text{and}\quad\tilde{\delta}^{\prime}=(x,\pi_{l-1},\dots,\pi_{j+1},\pi_j,d_p,\dots,d_2,d_1,a_q,\dots,a_2,a_1).
\]
From here, we see that $o(\Phi(\pi))$ and $o(\Phi(\sigma))$ are the same except for the positions of the letters $d_1=\beta_1(\pi),\pi_j,\pi_{j+1},\dots,\pi_{l-1},x$. More precisely, to obtain $o(\Phi(\sigma))$ from $o(\Phi(\pi))$, we remove all of these letters from where they are initially located, insert $\beta_1(\pi)$ between $d_2$ and $a_q$, and prepend the remaining letters $\pi_j,\pi_{j+1},\dots,\pi_{l-1},x$ at the beginning but in reverse order. To complete the proof of this case, we shall argue that this arrangement is obtained precisely by applying $\varphi_S$ to $o(\Phi(\pi))$ for $S=\{\beta_1(\pi),\pi_j,\pi_{j+1},\dots,\pi_{l-1},x\}$.

From Lemma \ref{l-xbet}, we know that $\beta_1(\pi)$ is larger than all of the letters $d_2,\dots,d_p$, but $a_q>\beta_1(\pi)$ from the definition of rix-factorization. Hence, letting $\beta_1(\pi)$ hop in $o(\Phi(\pi))$ will move $\beta_1(\pi)$ to the desired position. Furthermore, by the fact that the rixed points of $\pi$ are all greater than $\beta_1(\pi)$ and by the definition of canonical cycle representation, each of the letters $\pi_j,\pi_{j+1},\dots,\pi_{l-1},x$ is larger than all letters to its left in $o(\Phi(\pi))$. And since $\pi_j<\pi_{j+1}<\cdots<\pi_{l-1}<x$, letting all of these letters hop will move them to the beginning in reverse order. Hence we have $o(\Phi(\sigma))=\varphi_S(o(\Phi(\pi)))$, and applying $o^{-1}$ to both sides gives us $\Phi(\sigma)=\psi_S(\Phi(\pi))$.

\medskip{}
\noindent \textbf{Case 5:} $x=\beta_1(\pi)$ is not a rixed point of $\pi$.

Write $\beta = x d_2 \cdots d_l \pi_j \cdots \pi_n$ where $\pi_j,\dots,\pi_n$ are the rixed points of $\pi$, so 
\[
\delta = x d_2 \cdots d_l \quad\text{and}\quad\tilde{\delta}=(x,d_l,d_{l-1}\dots,d_2).
\]
By Lemma \ref{l-xbet}, we know that $x$ is a double descent of $\pi$, that $d_2\cdots d_l$ is nonempty, and that $x$ is larger than all of the letters $d_2,\dots,d_l$. 
Thus, in applying $\varphi_x$ to $\pi$, the rix-factors $\alpha_i$ will be unchanged but $x$ will hop over all of the letters $d_2,\dots,d_l$. In other words, we have $\alpha_i^{\prime}=\alpha_i$ for all $1\leq i\leq k$ and 
\[
d_2 \cdots d_l x \pi_j \cdots \pi_n=\alpha_{k+1}^{\prime}\cdots\alpha_m^{\prime}\underset{\beta^{\prime}}{\underbrace{d_p d_{p+1} \cdots d_l x \pi_j \cdots \pi_n}}
\]
for some $2\leq p\leq l$. Note that $x$ is a rixed point of $\sigma$ because $x>d_p$ and is part of an increasing suffix of $\sigma$. Let us write $\Rix(\sigma)=\{d_q,d_{q+1}\dots,d_l,x,\pi_j,\dots,\pi_n\}$ so that 
\[
\beta^{\prime}=\underset{\delta^{\prime}}{\underbrace{d_p d_{p+1} \cdots d_{q-1}}} d_q d_{q+1} \cdots d_l x \pi_j \cdots \pi_n.
\]
Thus $\Phi(\sigma)$, in canonical cycle representation, begins with the cycles
\[
\underset{\tilde{\delta}^{\prime}}{\underbrace{(d_p,d_{q-1},\dots,d_{p+1})}}\tilde{\alpha}_m^{\prime}\cdots\tilde{\alpha}_{k+1}^{\prime}.
\]
Note that the letters in the cycles $\tilde{\alpha}_m^{\prime}\cdots\tilde{\alpha}_{k+1}^{\prime}$ are precisely $d_{p-1},\dots,d_3,d_2$ in this given order. Comparing with $\tilde{\delta}=(x,d_l,d_{l-1}\dots,d_2)$, we see that in going from $o(\Phi(\pi))$ to $o(\Phi(\sigma))$, the only difference is in the positions of the letters $d_p=\beta_1(\sigma)$ and the letters $x,d_q,d_{q+1},\dots,d_l$ (the rixed points of $\sigma$ smaller than or equal to $x$). We must show that the movement of these letters corresponds precisely to letting them hop---that is, applying $\varphi_S$ to $o(\Phi(\pi))$ where $S=\{d_p,x,d_q,d_{q+1},\dots,d_l\}$ results in $o(\Phi(\sigma))$.

Let $y$ be any of the letters $x,d_q,d_{q+1},\dots,d_l$, which are all rixed points of $\sigma$ and thus fixed points of $\Phi(\sigma)$. Per canonical cycle representation, to go from $o(\Phi(\pi))$ to $o(\Phi(\sigma))$, each of these $y$ must be moved to the position immediately before the closest letter to the right of $y$ that is larger than $y$, which is the first entry of the cycle immediately after $(y)$ in $\Phi(\sigma)$. Now, recall that
\[
d_q<d_{q+1}<\cdots<d_l<x\qquad\text{and}\qquad\tilde{\delta}=(x,d_l,d_{l-1}\dots,d_2);
\]
together, these imply that $d_q$ is either a double descent or valley\footnote{It is possible for $d_q$ to be a valley only when $d_q = d_p$, i.e, when $\delta^{\prime}$ is empty.} of $o(\Phi(\pi))$, and all the other $y$ are double descents of $o(\Phi(\pi))$, so they will all hop to the right (or remain stationary) in applying $\varphi_S$. By the definition of valley-hopping, each of these $y$ will move to the position immediately before the closest letter to the right of $y$ that is larger than $y$, precisely as described above. Hence, in applying $\varphi_S$ to $o(\Phi(\pi))$, all of the letters $x,d_q,d_{q+1},\dots,d_l$ will move to the desired positions. It remains to show that $d_p$ will move to the correct position as well.

At this point, we note that it is possible for $d_p=\beta_1(\sigma)$ to be a rixed point of $\sigma$, and in this case we would have $d_p=d_q$ and the proof would be complete. So let us assume that $d_p$ is not a rixed point of $\sigma$. In going from $o(\Phi(\pi))$ to $o(\Phi(\sigma))$, the letter $d_p$ is moved to the very beginning of the permutation. Upon letting the letters $x,d_q,d_{q+1},\dots,d_l$ hop in $o(\Phi(\pi))$, only the letters $d_{q-1},\dots,d_{p+1}$ appear before $d_p$, so it suffices to show that $d_p$ is a double ascent of $o(\Phi(\pi))$ and that it is larger than all of the letters $d_{q-1},\dots,d_{p+1}$. The latter follows from Lemma \ref{l-xbet}, as $d_p=\beta_1(\sigma)$ and $d_{q-1},\dots,d_{p+1}$ are precisely the letters in $\beta^{\prime}$ which are neither $\beta_1(\sigma)$ nor rixed points of $\sigma$. To see that $d_p$ is a double ascent of $o(\Phi(\pi))$, we first note that $d_p$ is preceded by $d_{p+1}$ in $o(\Phi(\pi))$ and $d_{p+1}<d_p$. Now we consider two subcases:
\begin{itemize}
\item If $p=2$, then $d_p$ is the last entry of the cycle $\tilde{\delta}$ of $\Phi(\pi)$, which by canonical cycle representation implies that $d_p$ is a double ascent of $o(\Phi(\pi))$.
\item If $p>2$, then $d_p$ is followed by $d_{p-1}$ in $o(\Phi(\pi))$. Note that $d_{p-1}$ is the first entry of the cycle $\tilde{\alpha}_m^{\prime}$ appearing after $\tilde{\delta}^{\prime}$ in $\Phi(\sigma)$, which by canonical cycle representation implies that $d_p<d_{p-1}$. Hence, $d_p$ is a double ascent of $o(\Phi(\pi))$.
\end{itemize}
Therefore, the remaining letter $d_p$ also moves to the correct position after applying $\varphi_S$, and the proof is complete.
\end{proof}

\begin{cor} \label{c-oneinc}
Let $\pi$ be a permutation, $\Pi$ the valley-hopping orbit containing $\pi$, $\hat{\Pi}$ the restricted valley-hopping orbit containing $\pi$, $\Pi^{\prime}$ the cyclic valley-hopping orbit containing $\Phi(\pi)$, and $\hat{\Pi}^{\prime}$ the restricted cyclic valley-hopping orbit containing $\Phi(\pi)$. Then:
\begin{enumerate}
\item[\normalfont(a)]  $\Phi(\Pi)\subseteq\Pi^{\prime}$
\item[\normalfont(b)]  $\Phi(\hat{\Pi})\subseteq\hat{\Pi}^{\prime}$
\end{enumerate}
\end{cor}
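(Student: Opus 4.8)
The plan is to leverage Lemma \ref{l-Phiclosed}, which already carries out the essential computation: conjugating a single valley-hopping involution through $\Phi$ yields a cyclic valley-hopping involution $\psi_S$. Since the orbit $\Pi$ is obtained from $\pi$ by applying the involutions $\varphi_x$ one at a time, and cyclic valley-hopping orbits are closed under every $\psi_S$ (these form a $\mathbb{Z}_2^n$-action), part (a) will follow by a short induction on word length. Part (b) will use the same induction together with one additional observation that makes the ``exceptional'' behaviors in the definitions of $\hat{\varphi}_x$ and $\hat{\psi}_x$ match up.

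For (a), let $\sigma\in\Pi$ and write $\sigma=\varphi_{x_t}\circ\cdots\circ\varphi_{x_1}(\pi)$; I would induct on $t$. The case $t=0$ is immediate. Otherwise set $\rho=\varphi_{x_{t-1}}\circ\cdots\circ\varphi_{x_1}(\pi)$, so that $\Phi(\rho)\in\Pi^{\prime}$ by the inductive hypothesis and $\sigma=\varphi_{x_t}(\rho)$. Applying Lemma \ref{l-Phiclosed} with $\rho$ in place of $\pi$ gives $\Phi(\sigma)=\psi_S(\Phi(\rho))$ for some $S\subseteq[n]$, and since $\psi_S$ preserves $\Pi^{\prime}$, we conclude $\Phi(\sigma)\in\Pi^{\prime}$. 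Hence $\Phi(\Pi)\subseteq\Pi^{\prime}$.

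For (b), I would first record that the first letter of $o(\Phi(\rho))$ is always $\beta_1(\rho)$: reading off the canonical form \eqref{e-can}, the leading cycle is $\tilde{\delta}$ when $\delta\neq\varnothing$, whose leading entry is $d_1=\beta_1(\rho)$ (since $\beta_1(\rho)$ is then not a rixed point, it survives the deletion that produces $\delta$ from $\beta$), while if $\delta=\varnothing$ then $\beta_1(\rho)$ is a rixed point of $\rho$, necessarily the smallest one, so $(\beta_1(\rho))$ is the leading cycle. Now run the induction as in (a), but with the restricted generators $\hat{\varphi}_{x_t}$ and the restricted cyclic involutions $\hat{\psi}_{x_t}$, splitting the inductive step into two cases. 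If $x_t\in\Rix(\rho)$ or $x_t=\beta_1(\rho)$, then $\hat{\varphi}_{x_t}(\rho)=\rho$, and because $\Rix(\rho)=\Fix(\Phi(\rho))$ and the first letter of $o(\Phi(\rho))$ is $\beta_1(\rho)$, we also have $\hat{\psi}_{x_t}(\Phi(\rho))=\Phi(\rho)$; thus $\Phi(\sigma)=\Phi(\rho)\in\hat{\Pi}^{\prime}$. Otherwise $\hat{\varphi}_{x_t}(\rho)=\varphi_{x_t}(\rho)$ with $x_t$ neither a rixed point of $\rho$ nor $\beta_1(\rho)$, so Lemma \ref{l-Phiclosed}(a) gives $\Phi(\sigma)=\psi_{x_t}(\Phi(\rho))$; and since $x_t\notin\Fix(\Phi(\rho))$ and $x_t$ is not the first letter of $o(\Phi(\rho))$, this equals $\hat{\psi}_{x_t}(\Phi(\rho))\in\hat{\Pi}^{\prime}$. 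In either case $\Phi(\sigma)\in\hat{\Pi}^{\prime}$, completing the induction.

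Essentially all of the difficulty lives in Lemma \ref{l-Phiclosed}, which is already established; the remaining work is bookkeeping, the one genuinely new point being the identification of $\beta_1(\rho)$ with the first letter of $o(\Phi(\rho))$, which is precisely what makes the restricted versions of the two actions correspond.
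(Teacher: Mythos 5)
Your proof is correct, and the overall strategy—reducing to Lemma~\ref{l-Phiclosed} and observing that $\Rix(\rho)=\Fix(\Phi(\rho))$ and $\beta_1(\rho)$ is the first letter of $o(\Phi(\rho))$—is the same one the paper uses. Where you differ is in how you organize part (b): the paper assumes without loss of generality that none of the hopping letters $x_i$ is $\beta_1(\pi)$ or a rixed point of $\pi$, and then invokes Lin and Zeng's result (their Lemma~13) that $\beta_1$ and $\Rix$ are invariant under restricted valley-hopping to conclude $\hat{\varphi}_{x_i}=\varphi_{x_i}$ at every intermediate stage; you instead run a letter-by-letter induction that splits the step into two cases ($x_t$ exceptional or not) and matches each case against the definition of $\hat{\psi}_{x_t}$. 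Your version is self-contained and avoids the external reference, at the cost of a slightly more granular argument; the paper's version is shorter but leans on the cited invariance result. Your preliminary identification of $\beta_1(\rho)$ with the first letter of $o(\Phi(\rho))$ (split on whether $\delta$ is empty, using Lemma~\ref{l-xbet}) is also correct and supplies a justification that the paper asserts without proof.
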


\begin{proof}
The proof of part (a) from Lemma \ref{l-Phiclosed} is straightforward and so it is omitted. Here we prove (b) as it requires a more subtle argument.

Let $\sigma\in\hat{\Pi}$. Then, there exists $S=\{x_{1},x_{2},\dots,x_{k}\}$ for which
\[
\sigma=\hat{\varphi}_{S}(\pi)=(\hat{\varphi}_{x_{k}}\circ\cdots\circ\hat{\varphi}_{x_{2}}\circ\hat{\varphi}_{x_{1}})(\pi).
\]
Assume without loss of generality that none of the $x_{i}$ is $\beta_{1}(\pi)$ or a rixed point of $\pi$. According to \cite[Lemma 13]{Lin2015}, $\beta_{1}(\pi)$ and $\Rix(\pi)$ are invariant under restricted valley-hopping, which means that none of the $x_{i}$ is the first letter of the $\beta$ rix-factor or is a rixed point of any of the permutations
\[
\hat{\varphi}_{x_{1}}(\pi),\;(\hat{\varphi}_{x_{2}}\circ\hat{\varphi}_{x_{1}})(\pi),\;\dots,\;(\hat{\varphi}_{x_{k}}\circ\cdots\circ\hat{\varphi}_{x_{2}}\circ\hat{\varphi}_{x_{1}})(\pi)
\]
in the same restricted valley-hopping orbit as $\pi$. Thus we have $\hat{\varphi}_{x_{i}}=\varphi_{x_{i}}$ for each $1\leq i\leq k$; that is,
\[
\sigma=\varphi_{S}(\pi)=(\varphi_{x_{k}}\circ\cdots\circ\varphi_{x_{2}}\circ\varphi_{x_{1}})(\pi).
\]
Applying Lemma \ref{l-Phiclosed} (a), we deduce
\[
\Phi(\sigma)=\psi_{S}(\Phi(\pi))=(\psi_{x_{k}}\circ\cdots\circ\psi_{x_{2}}\circ\psi_{x_{1}})(\Phi(\pi)).
\]
Observe from the definition of $\Phi$ that the rixed points of a permutation $\tau$ are precisely the fixed points of $\Phi(\tau)$, and that $\beta_{1}(\tau)$ is the first letter of $o(\Phi(\tau))$. Consequently, we have that $\psi_{x_{i}}=\hat{\psi}_{x_{i}}$ for each $1\leq i\leq k$, which implies $\Phi(\sigma)=\hat{\psi}_{S}(\Phi(\pi))$. Hence, (b) is proven.
\end{proof}

Theorem \ref{t-main} now follows easily from Corollaries \ref{c-orbsize} and \ref{c-oneinc}.

\begin{proof}[of Theorem \ref{t-main}]
By Corollary \ref{c-orbsize} and the fact that $\Phi$ is a bijection, we have
\[
|\Phi(\Pi)|=|\Pi|=|\Pi^{\prime}|\qquad\text{and}\qquad|\Phi(\hat{\Pi})|=|\hat{\Pi}|=|\hat{\Pi}^{\prime}|,
\]
which together with $\Phi(\Pi)\subseteq\Pi^{\prime}$ and $\Phi(\hat{\Pi})\subseteq\hat{\Pi}^{\prime}$ (Corollary \ref{c-oneinc}) yield the desired results.
\end{proof}

\acknowledgements
We thank Tom Roby for helpful discussions on this project.

\bibliographystyle{plain}
\bibliography{bibliography}

\end{document}